\newcommand{\vertiii}[1]{{\left\vert\kern-0.25ex\left\vert\kern-0.25ex\left\vert #1 
		\right\vert\kern-0.25ex\right\vert\kern-0.25ex\right\vert}}
\definecolor{armygreen}{rgb}{0.29, 0.8, 0.13}
\definecolor{auburn}{rgb}{0.43, 0.21, 0.1}
\definecolor{burgundy}{rgb}{0.5, 0.0, 0.13}
\definecolor{medium red}{rgb}{.490,.298,.337}
\definecolor{dark red}{rgb}{.235,.141,.161}
\let\OLDthebibliography\thebibliography
\renewcommand\thebibliography[1]{
	\OLDthebibliography{#1}
	\setlength{\parskip}{0pt}
	\setlength{\itemsep}{0pt plus 0.1ex}
}
\DeclareFontFamily{U}{mathx}{\hyphenchar\font45}
\DeclareFontShape{U}{mathx}{m}{n}{<-> mathx10}{}
\DeclareSymbolFont{mathx}{U}{mathx}{m}{n}
\DeclareMathAccent{\widebar}{0}{mathx}{"73}
\titleformat{\section}[block]{\normalfont\scshape\large\filcenter}{\thesection .}{1em}{}
\titleformat{\subsection}{\normalfont\scshape\large}{\thesubsection}{1em}{}
\titleformat{\subsubsection}{\normalfont\scshape\large}{\thesubsubsection}{1em}{}
\newtheorem{theorem}{Theorem}[section]
\newtheorem{proposition}{Proposition}[section]
\newtheorem{lemma}{Lemma}[section]
\newtheorem{corollary}{Corollary}[section]
\theoremstyle{definition}
\newtheorem{definition}{Definition}[section]
\newtheorem{example}{Example}[section]
\theoremstyle{remark}
\newtheorem{remark}{\textsc{Remark}}[section]
\renewenvironment{proof}[1][\proofname]{{\bfseries #1: }}{\qed}
\newcommand{\sr}{\textcolor{red}}
\title{\textsc{\textbf{Regularized bayesian best response learning in finite games}}}
\author{\textsc{Sayan Mukherjee}\footnote{(Corresponding Author) Economic Research Unit, Indian Statistical Institute, Kolkata. e-mail: sm.isical@gmail.com} \hspace{1mm} and \hspace{0.8mm} \textsc{Souvik Roy}\footnote{Applied Statistics Unit, Indian Statistical Institute, Kolkata. e-mail: souvik.2004@gmail.com}}
\date{February 7, 2023}
\begin{document}
	
	\maketitle
	
	\begin{abstract}\singlespacing
		\noindent  We introduce the notion of regularized Bayesian best response ({RBBR}) learning dynamic in heterogeneous population games. We obtain such a dynamic via perturbation by an arbitrary lower semicontinuous, strongly convex regularizer in Bayesian population games with finitely many strategies. We provide a sufficient condition for the existence of rest points of the {RBBR} learning dynamic, and hence the existence of regularized Bayesian equilibrium in Bayesian population games. These equilibria are shown to approximate the original Bayesian equilibria for vanishingly small perturbations. We also explore the fundamental properties of the {RBBR} learning dynamic, which includes the existence of unique continuous solutions from arbitrary initial conditions, as well as the continuity of the solution trajectories thus obtained with respect to the initial conditions. Finally, as application the the theory we introduce the notions of Bayesian potential and Bayesian negative semidefinite games and provide convergence results for such games.

		\bigskip
		\noindent\textbf{Keywords}: Regularizer; Bayesian Strategies; Regularized Bayesian Best Response Learning; Bayesian Potential Games, Bayesian Negative Semidefinite Games.
		
		\bigskip
		\noindent\textbf{Mathematics Subject Classification}: 91A07, 91A14, 91A22, 91A26, 91A27.

	\end{abstract}
	
	\newpage
	\tableofcontents
	
	\section{{Introduction}}
	
	In this paper, we seek to introduce the notion of the regularized Bayesian best response ({RBBR}) learning dynamic for heterogeneous populations, where the agents in the population are diverse. Unlike usual homogeneous population games, players are distributed over types taking values in a complete separable metric space. Extrapolating the notion of best response for homogeneous population games, the Bayesian best response also seems to be the plausible behavioral norm in evolutionary game theory. However, again, due to the multiplicity of the Bayesian best response map and the fact that it can change abruptly, makes the study of the map challenging exercise. Thus, the dynamic generated via the Bayesian best response map is difficult to analyze (\cite{gilboa1991social}, \cite{hofbauer1995stability}). In case of regularized best response dynamics, one can circumvent the aforementioned problem by considering a uniquely defined smoothed version of the Bayesian best response, which can then be studied via the standard theory of ordinary differential equations.
	
	As a result, we obtain the {RBBR} learning dynamic, where the agents' payoffs are subject to perturbation by an arbitrary, but fixed regularizer, and subsequently the agents best respond with respect to these perturbed payoffs. The logit dynamic (\cite{fudenberg1998theory}) is a prototypical example of evolutionary dynamics in population games with homogeneous payoff structure, where the agents' payoff are regularized using the well--known Shannon-Gibbs entropy. In our set--up, such a dynamic is called the logit {RBBR} learning dynamic. However, instead of assuming a particular form of the regularizer, {RBBR} learning dynamic is generated via perturbation by an arbitrary strongly convex regularizer, which includes the Tsallis entropy and the Burg entropy.\footnote{Both Tsallis and Burg entropy has appeared in the literature of (deterministically) perturbed best response dynamics, as well as in the context of online non--convex optimization problems. See \cite{hofbauer2002global}, and \cite{heliou2020online} for further details.}
	
	We now provide some motivations behind our study. Regularized best response dynamics are one of the most important classes of dynamics in the literature of evolutionary game theory. The logit dynamic is the most common and widely studied dynamic in evolutionary games. Several authors have, since then, extended the notion of logit dynamic to the class of more general classes of perturbed best response dynamics as a means to study the social behavior of the underlying population (\cite{benaim1999mixed}, \cite{hofbauer2005learning}, \cite{hofbauer2002global}, \cite{hofbauer2007evolution}). 
	To the best of our knowledge, regularized best response have been studied primarily in the context of homogeneous population games. This paper is an attempt to extend the notion of perturbed best response dynamics to the general class of Bayesian games. More formally, as opposed to population games with homogeneous payoff structure, we consider Bayesian population games, where the agents are distributed over types. The purpose of considering Bayesian population games is that in real life scenarios it is highly unlikely that all agents in the population are of the same type. 
	
	We now provide a brief heuristics of our model, which we borrow from \cite{ely2005evolution}. We consider a population where the agents are distributed over a complete separable metric space consisting of types, equipped with the usual Borel sigma-algebra, and a probability measure. Such a probability space is called the type space of the population. The state space of the {RBBR} learning dynamic is the space of Bayesian strategies, which are B\"ochner measurable maps from the type space to the space of all distributions over the strategies. However, the space of Bayesian strategies do not form a normed linear space. We therefore consider the normed linear space of integrable signed Bayesian strategies which are measurable maps from the type space onto $\mathbb{R}^{n}$. Following \cite{ely2005evolution}, we endow this space with the strong norm, which turns it into a Banach space. It then follows from the relevant literature (\cite{diestel1978vector}) that the space of signed Bayesian strategies is actually the Banach space of B\"ochner integrable functions. We then define the regularized Bayesian best response dynamic on the space of signed Bayesian strategies and subsequently study the properties of the dynamic.

	Some overview of our results are as follows. First, we establish the existence of rest points of the {RBBR} learning dynamic, which are fixed points of the regularized Bayesian best response map, and which we call regularized Bayesian equilibria. Next, we show that these regularized equilibria which we obtain, act as approximations to the Bayesian equilibria of the original game for vanishingly small regularizations. This approximation result is somewhat non-trivial and calls for some added conditions on the regularizers apart from being strong convexity. Loosely speaking, we require the regularizer to satisfy any one of the two conditions, namely Condition C1 or Condition C2. Condition C1 ensures that the population concentrates on the strategy which provides maximum payoff as the regularization vanishes. Condition C2 requires the regularizer to satisfy some mild continuity conditions in the sense that small changes in the population state leads only to a small change in the value of the regularizer. We also show that the well-known regularizers considered in the literature of evolutionary game theory satisfies one of these conditions.
	Proceeding, we show the existence of a unique continuous solution trajectory to the RBBR dynamic is continuous for every given initial condition. We further show that the solutions to the RBBR dynamic are continuous with respect to the initial states (that is, if we start from two initial states which are very close, then the respective solutions to the RBBR dynamic will also be enough close). As applications of our results, we introduce the notions of Bayesian potential and Bayesian negative semidefinite games. We bring to notice to the reader that the notion on Bayesian potential games have been considered before in \cite{zusai2018evolutionary}. Both these games are extensions of potential games and negative semidefinite games for homogeneous populations, respectively, for heterogeneous populations.\footnote{See \cite{monderer1996potential}, \cite{cheung2014pairwise}, \cite{lahkar2015logit} for a complete exposition of homogeneous population potential and negative semidefinite games.}

	Unlike evolutionary games with finitely many strategies, where the state space of the underlying dynamic is finite dimensional, the space of Bayesian strategies on which the {RBBR} learning dynamic is defined is an infinite dimensional Banach space. As a result, the evolutionary dynamics on the space of Bayesian strategies in a non trivial extension of the existing results on homogeneous population games with finite strategies. The choice of topology on the state space plays an important role in deriving the above mentioned results. For example, the weak topology on the space of signed Bayesian strategies turns out to be the appropriate topology to prove the result related to the existence of regularized Bayesian equilibrium. Whereas, on the other hand, the strong topology (induced by the strong norm) is necessary to show existence of solution trajectories to the {RBBR} learning dynamic. In establishing convergence results for Bayesian potential and Bayesian negative semidefinite games requires a further exercise of characterizing (strong) norm compact and forward invariant subsets of the space of signed Bayesian strategies in view of constructing a suitable Lyapunov function and then applying relevant results from theory of dynamical systems.

	As far as we know, evolution in Bayesian games have not been explored much in the literature of evolutionary game theory. Such a theory of evolution in Bayesian games was first introduced by \cite{ely2005evolution} in the view of studying the Bayesian best response dynamics where the agents in the population has diverse preferences. The authors did not consider regularization of the Bayesian best response. Instead, they provided sufficient conditions for the Bayesian best response correspondence to be single-valued. They showed that the equilibrium behavior of such a dynamic can be studied via the aggregation, though the disequilibrium behavior of both the dynamics are quite dissimilar. In principle, they showed that the equilibrium behavior and stability of the Bayesian best response dynamic can be fully characterized by that of the corresponding aggregation dynamic. This paper was followed by \cite{sandholm2007evolution} which studies the evolutionary stability of purified equilibria of two-player normal form games under the Bayesian best response dynamic. \cite{staudigl2013co} considers a network evolution model where players have diverse preferences and provides a closed-form expression for the invariant distribution of the underlying co-evolutionary process for potential games. \cite{zusai2018evolutionary} considers evolutionary dynamics in heterogeneous populations obtained via a mean dynamic based on revision protocol functions. They show that under some regularity conditions on the revision protocol function, the evolutionary dynamic admits a unique solution trajectory along with equilibrium stability in heterogeneous potential games. Further, they show equilibrium stability in heterogeneous population games. Our approach is different from \cite{zusai2018evolutionary} in the sense that we focused primarily on generating the best response evolutionary dynamic obtained via regularization of the agents' payoffs. We find this important to study mainly due to two major reasons; firstly, a vast amount of literature in evolutionary game theory devoted to perturbed best response learning, and secondly, to address and rigorously provide insights to similar questions, for eg. existence and approximation of regularized equilibria, as well as convergence of certain classes of games, namely Bayesian potential and Bayesian negative semidefinite games.
	
	The paper is organized as follows. Section \ref{SectionPreliminaries} contains the preliminaries of the model. In Section \ref{SectionPerturbedBestResponse}, we first introduce the notion of a regularizer, followed by the definition of regularized Bayesian best response function, and the associated {RBBR} learning dynamic. We then establish the existence of rest points of the {RBBR} learning dynamic, in particular, the existence of regularized Bayesian equilibrium followed by approximation result concerning regularized Bayesian equilibria. In Section \ref{SectionFundamentalProp}, we establish some fundamental properties of the {RBBR} learning dynamic subject to some conditions on the type space. We apply the theory from the aforementioned sections to prove convergence results pertaining to Bayesian potential games and Bayesian negative semidefinite games in Section \ref{SectionBayesianPotential} and Section \ref{SectionBayesianNegative}, respectively.

	\section{{Preliminaries}}\label{SectionPreliminaries}

	\subsection{Basic notations}
	For $n \geq 1$, let $\mathbb{R}^n$ denote the $n$-dimensional Euclidean space endowed with its Borel sigma algebra $\mathcal{B}_n$. Let $(\Omega,{d}_{\Omega})$ be a complete separable metric space equipped with its Borel sigma algebra $\mathcal{B}_{\Omega}$ and a probability measure ${\xi}$. Let $S:=\{1,\ldots,n\}$ be a finite set and let $\Delta$ be the collection of all probability distributions on $S$ which is identified by the simplex $\Delta:=\{{x}=(x_1,\ldots,x_n)\in [0,1]^n:\sum_{1 \leq j \leq n}x_{j}=1\}$. We use the notations $\Delta^{\circ}$ and $\partial \Delta$ to denote the relative interior and boundary of $\Delta$ respectively. Let $\mathcal{C}_n(\Delta)$ be the collection of all continuous maps from $\Delta$ into $\mathbb{R}^n$ endowed with the topology of uniform convergence.\footnote{Suppose $(f_m)_{m \geq 1}, f \in \mathcal{C}_n(\Delta)$. Then $f_m \to f$ uniformly if $\|f_m-f\|_{\infty}:=\sup_{x \in \Delta}\|f_m(x)-f(x)\| \to 0$ as $m \to \infty$.}
	
	\subsection{Model}
	Consider a large population of heterogeneous agents whose mass is normalized to unity. By a heterogeneous population, we mean that the agents are of different types, and that they are distributed over $\Omega$ according to the given probability measure ${\xi}$. The probability measure $\xi$ thus accounts for the degree of heterogeneity within the population. We call the probability space $(\Omega,\xi)$ the \textbf{type space}, and the probability measure $\xi$, the \textbf{type measure} of the population. The set ${S}$ denotes the strategy space of the players in the population. In homogeneous population games, a probability distribution $\textbf{x} \in \Delta$ denotes the state/aggregate behavior of the population. Thus, if we say that the aggregate behavior of the population is $\textbf{x}$, we mean that $\textbf{x}_{j}$ proportion of players in the population are exercising strategy $j \in {S}$. However in case of heterogeneous populations, we instead consider a more refined version of aggregate behavior which is called a Bayesian strategy.\footnote{The notion of a Bayesian strategy was first introduced in the context of evolutionary games by \cite{ely2005evolution}.}

	A \textit{Bayesian strategy} is a ${\xi}$-measurable mapping $\sigma:\Omega \rightarrow \Delta$, where $\sigma(\omega)$ denotes the belief (distribution) induced by the players of type $\omega$ over the strategy space $S$.\footnote{See Appendix \ref{SubSecWeakTopRelWeakCpt}, Definition \ref{Def_mu_mble}.} Loosely speaking, a Bayesian strategy can be viewed as a random vector on the probability space $(\Omega,{\xi})$, taking values in the simplex $\Delta$. Let $\Sigma$ be the collection of all Bayesian strategies. We say that two Bayesian strategies $\sigma$ and ${\rho}$ are equivalent and write $\sigma \sim {\rho}$ if $\sigma(\omega)={\rho}(\omega)$, for ${\xi}$-a.e $\omega \in \Omega$. Thus, with an abuse of notation, we denote the equivalence classes of all Bayesian strategies by ${\Sigma}$ itself. We conclude this section with the definition of Bayesian population games followed by a related example on Bayesian aggregative population games. As we shall observe later that Bayesian population games are extensions of usual population games to heterogeneous populations.
	
	\begin{definition}
		{A \textbf{Bayesian population game} is a map ${\mathcal{G}}:{\Sigma} \times \Omega \rightarrow \mathbb{R}^{n}$ such that for every $\sigma \in \Sigma$, the mapping} $$\omega \mapsto \mathcal{G}(\sigma,\omega) \hspace{0.5cm} \textit{ is } \mathcal{B}_{\Omega}/\mathcal{B}_n \text{ measurable}.$$ 
	\end{definition}
	
	The definition of Bayesian population games have the following interpretation. For a Bayesian strategy $\sigma \in \Sigma$, $\mathcal{G}(\sigma,\omega)$ denotes the payoff vector of the players in the population of type $\omega \in \Omega$. More precisely, if the population is playing Bayesian strategy $\sigma$, then a player of type $\omega$ exercising strategy $i$ receives a payoff $\mathcal{G}^{i}(\sigma, \omega)$. \cite{zusai2018evolutionary} calls such a game a heterogeneous population game. The following is a special example of a Bayesian population game, where the space of types $\Omega$ are identified by the payoff functions (population games) which the players in the population use to calculate their payoff. We call such a game, a {Bayesian aggregative population game}. Such games have been considered earlier in \cite{ely2005evolution}. We clarify that our definition of a Bayesian population game has a more general structure and all our results in the subsequent sections of the paper holds true for Bayesian populations games with the general set-up.
	
	\begin{example}[{\textbf{Bayesian aggregative population games}}, \cite{ely2005evolution}]\label{DefAggregateBayPopGame}
		\textit{In Bayesian aggregative population games, the type space of the agents in the population is the collection of continuous maps from $\Delta$ to $\mathbb{R}^n$. Mathematically, we have $(\Omega, d_{\Omega}):=(\mathcal{C}_n(\Delta), \|\cdot\|_{\infty})$, where $\|\cdot\|_{\infty}$ denotes the sup-norm metric, and is defined as $\|\omega-\bar{\omega}\|_{\infty}:=\sup_{{x} \in \Delta}\|\omega({x})-\bar{\omega}({x})\|$ for all $\omega,\bar{\omega} \in \Omega$. Since $\Delta$ is compact, we have that $\mathcal{C}_n(\Delta)$, and thus, $\Omega$ is a complete and separable metric space. Let ${\xi}$ be a probability measure on $\Omega$. Note that from the standard theory of evolutionary games, the elements of $\Omega$ are population games. The \textit{expectation operator} is a mapping ${\mathcal{E}}:{\Sigma} \rightarrow \Delta$ such that for every ${\sigma} \in {\Sigma}$,} $${\mathcal{E}}^{i}({\sigma}):= \int_{\Omega}{\sigma}^{i}(\omega){\xi}(d\omega), \hspace{3mm} \text{ for all } i \in {S}.\footnote{Since the Bayesian strategies take values in the compact set $\Delta$, the expectation operator $\mathcal{E}$ is well--defined.}$$
		
		\textit{Thus, given $\sigma \in \Sigma$, $\mathcal{E}(\sigma)$ denotes the aggregate belief of the population over the strategies with respect to the probability measure ${\xi}$. 
			Following \cite{ely2005evolution}, we define the Bayesian aggregative population game as a mapping $\mathcal{G}:\Sigma \times \Omega \rightarrow \mathbb{R}^{n}$ such that for every $\sigma \in \Sigma$, $$\mathcal{G}(\sigma,\omega):=\omega(\mathcal{E}(\sigma)), \hspace{5mm} \text{ for all } \omega \in \Omega.$$ 
			The definition of Bayesian aggregative population game has the following interpretation. Given a Bayesian strategy $\sigma$, $\omega^i(\mathcal{E}(\sigma))$ denotes the payoff to a player of type $\omega \in \Omega$ exercising strategy $i \in S$ at aggregate population state $\mathcal{E}(\sigma)$. Thus, the payoff to a player of type $\omega$ exercising strategy $i$ depends only on the aggregate belief $\mathcal{E}(\sigma)$ over the strategies.
			Note that $\mathcal{G}$ above is well-defined since for every $\sigma \in \Sigma$, the mapping $\omega \mapsto \omega(\mathcal{E}(\sigma))$ is continuous, and hence measurable.} 
	\end{example}

	\section{{Regularization of Bayesian best response}}\label{SectionPerturbedBestResponse}
	In this section we seek to define the notion of regularized Bayesian best response learning dynamic, which in turn requires us to define the notion of Bayesian best response map. It is an extension of best response maps to the context of Bayesian population games. For a Bayesian strategy $\sigma \in \Sigma$, the expected payoff to a player of type $\omega$ against an aggregate population state (aggregate belief) $\textbf{y} \in \Delta$ is given by $\langle \textbf{y},\mathcal{G}(\sigma,\omega)\rangle$,	where $\langle\cdot,\cdot\rangle$ denotes the usual inner product on $\mathbb{R}^{n}$. Quite naturally, the \textbf{Bayesian best response correspondence} is a mapping ${\beta}:\Sigma \twoheadrightarrow \Sigma$  such that for all $\sigma \in \Sigma$,
	
	\begin{equation}\label{EqBayesianBestResponse}
		{\beta}[\sigma](\omega):=\arg\max_{\textbf{y} \in \Delta}\langle \textbf{y},\mathcal{G}(\sigma,\omega)\rangle, \hspace{3mm} \text{ for all } \omega \in \Omega.
	\end{equation}
	
	Note that from the definition of Bayesian population games, $\mathcal{G}(\sigma,\omega)$ denotes the payoff vector a player of type $\omega$ faces at Bayesian strategy $\sigma$. Thus, the best response to a player of type $\omega$ at Bayesian strategy $\sigma$ is any probability distribution $\textbf{y}$ on $S$ which maximizes the expected payoff $\langle \textbf{y},\mathcal{G}(\sigma,\omega)\rangle$.
	
	A Bayesian strategy $\sigma^{\circ} \in \Sigma$ is said to be a \textbf{Bayesian equilibrium} of the game $\mathcal{G}$ is $\sigma^{\circ} \in \beta(\sigma^{\circ})$. In terms of the game $\mathcal{G}$, the above condition can equivalently be expressed in the following way: $\sigma^{\circ}$ is a Bayesian equilibrium if for $\xi$-a.e $\omega \in \Omega$
	\begin{equation}
		\sigma^{\circ}_j(\omega)>0 \implies \mathcal{G}_{j}(\sigma^{\circ},\omega) \geq \mathcal{G}_{i}(\sigma^{\circ},\omega), \hspace{3mm} \text{ for all } i \in S.
	\end{equation}
	
	In principle, the above condition suggests that a Bayesian strategy $\sigma^{\circ}$ is an equilibrium provided the following condition holds: if a strategy $j \in S$ has positive probability in the induced belief $\sigma^{\circ}(\omega)$, then the payoff vector $\mathcal{G}(\sigma,\omega)$ should have a component-wise maximum at strategy
	$j$. Thus, strategies in the support of the induced belief must have maximum payoff. Let $\text{BE}(\mathcal{G},\xi)$ denote the collection of Bayesian equilibria corresponding to the game $\mathcal{G}$.
	In order to state the result on existence of Bayesian equilibrium, we require the notion of an appropriate topology on the space of Bayesian strategies, which essentially is the weak topology in this case (see Definition \ref{DefWeakTopOnSigma}).
	
	\begin{definition}
		{A Bayesian population game $\mathcal{G}:\Sigma \times \Omega \to \mathbb{R}^n$ is said to be \textbf{weakly continuous} if the mapping $\sigma \mapsto \mathcal{G}(\sigma,\omega)$ is continuous relative to the weak topology on $\Sigma$, for ${\xi}$-a.e $\omega \in \Omega$.} 
	\end{definition}

	\begin{proposition}
		Suppose that the population game $\mathcal{G}$ is weakly continuous. Then $\text{BE}(\mathcal{G},\xi)$ is a non-empty and compact subset of $\Sigma$.
	\end{proposition}
	\begin{proof}
		Follows from Kakutani-Glicksberg-Fan theorem (see \cite{charalambos2013infinite}).
	\end{proof}
	
	Suppose now that we fix a type $\omega \in \Omega$. We then define the \textbf{game sliced at type $\omega$} as a mapping $\mathcal{G}_{\omega}:\Sigma \to \mathbb{R}^n$ such that $\mathcal{G}_{\omega}(\sigma):=\mathcal{G}(\sigma,\omega)$ for all $\sigma \in \Sigma$. In words, the sliced game determines how payoffs behave as a function of Bayesian strategies, keeping a type fixed. Thus, we can view the sliced game $\mathcal{G}_{\omega}$ as the game corresponding to the underlying sub-population consisting of players of exactly one type $\omega$. We then introduce the \textbf{$\omega$-sliced best response} for the game $\mathcal{G}_{\omega}$ as a mapping $\beta_{\omega}:\Sigma \twoheadrightarrow \Delta$ such that 
	\begin{equation}\label{EqSlicedBayesianBestResponse}
		\beta_{\omega}(\sigma):=\beta[\sigma](\omega), \hspace{3mm} \text{ for all } \sigma \in \Sigma.
	\end{equation}
	
	We say that a Bayesian strategy $\sigma^{\circ}$ is an \textbf{$\omega$-sliced Bayesian equilibrium} of the $\omega$-sliced game $\mathcal{G}_{\omega}$ if $\sigma^{\circ}(\omega) \in \beta_{\omega}(\sigma^{\circ})$.
	We then have the following characterization result for Bayesian equilibria of the original game $\mathcal{G}$ and the equilibria of the sliced game counterparts $\mathcal{G}_{\omega}$.
	
	\begin{proposition}\label{Prop-OriGame-SliceGame}
		A Bayesian strategy $\sigma^{\circ}$ is a Bayesian equilibrium of the game $\mathcal{G}$ iff $\sigma^{\circ}$ is an $\omega$-sliced Bayesian equilibrium of the game $\mathcal{G}_{\omega}$ for $\xi$-a.e $\omega \in \Omega$.
	\end{proposition}
	\begin{proof}
		See Appendix \ref{App-Prop-OriGame-SliceGame}.
	\end{proof}

	As the title of the paper suggests, we now shift our focus towards the regularized version of the Bayesian best response. Recall that the Bayesian best response is a multi-valued mapping, which, under standard evolutionary dynamics, becomes difficult to analyze. To circumvent this problem, we resort to a regularized (smoothed) version of the Bayesian best response mapping in a view to obtain a unique smoothed best response which can then be analyzed using the existing theory of differential equations and dynamical systems. We begin with the definition of a regularizer.
	
	\begin{definition}\label{DefRegularizer}
		{A mapping $\textbf{v}:\Delta \rightarrow \mathbb{R}\cup\{\infty\}$ is called a \textbf{regularizer} on $\Delta$ if:}
		\vspace{-1mm}
		\begin{itemize}
			\item {it is finite except possibly on the relative boundary $\partial \Delta$ of $\Delta$;}
			\vspace{-2mm}
			\item {it is lower semi-continuous on $\Delta$;} 
			\vspace{-2mm}
			\item {it is strongly convex on $\Delta^{\circ}$, in the sense that there exists ${\gamma}>0$ such that for all $0 \leq \alpha \leq 1$,} $$\textbf{v}(\alpha \textbf{x}+(1-\alpha)\textbf{y})\leq\alpha \textbf{v}(\textbf{x})+(1-\alpha)\textbf{v}(\textbf{y})-\dfrac{{\gamma}\alpha(1-\alpha)}{2}\|\textbf{x}-\textbf{y}\|^{2}, \hspace{3mm} \text{ for all } \textbf{x}, \textbf{y} \in \Delta^{\circ}.$$
		\end{itemize}
	\end{definition}
	
	It is well-known (see \cite{coucheney2015penalty}, \cite{mertikopoulos2016learning}) that regularization ensures the existence of a unique best response. We bring to notice that our definition of a regularizer is weaker as compared to the one used in the references mentioned just prior, in the sense that regularizers in \cite{coucheney2015penalty} and \cite{mertikopoulos2016learning} are smooth in the interior $\Delta^{\circ}$; whereas in our case, we only require lower semicontinuity on $\Delta$.
	
	\begin{definition}[\textbf{Regularized Bayesian best response}]\label{DefRBBRMap}
		\textit{Given a noise parameter $\epsilon>0$, the regularized Bayesian best response ({RBBR}) is a mapping ${\beta}_{\epsilon}:\Sigma \rightarrow \Sigma$ such that for every $\sigma \in \Sigma$,} 
		\begin{equation}\label{RBBRMap}
			{\beta}_{\epsilon}[\sigma](\omega):=\arg \max_{\textbf{y} \in \Delta}\langle \textbf{y},\mathcal{G}(\sigma,\omega)\rangle-\epsilon \textbf{v}(\textbf{y}), \hspace{3mm} \text{ for all } \omega \in \Omega.\footnote{The fact that the perturbation function $\textbf{v}$ is strongly convex implies that for every fixed $\sigma \in \Sigma$, the mapping $\textbf{y} \mapsto \langle \textbf{y},\mathcal{G}(\sigma,\omega)\rangle-\epsilon \textbf{v}(\textbf{y})$ has a unique maximum.} 
		\end{equation}
	\end{definition}
	As we mentioned before, the purpose behind defining the regularized version of the Bayesian best response correspondence is to ensure the existence of a unique maximizer in (\ref{RBBRMap}), when the expected payoff is perturbed via the regularizer $\textbf{v}$, thereby making the study of the evolutionary learning dynamic amenable to the existing theory of ordinary differential equations and dynamical systems. Suppose further that the regularizer is steep, in the sense that $\|\nabla\textbf{v}(\textbf{x}_n)\| \to \infty$ as $\textbf{x}_n \to \partial\Delta$. It then follows that for every $\sigma \in \Sigma$, the best response $\beta_{\epsilon}[\sigma](\omega)$ to type $\omega \in \Omega$ lies in the relative interior $\Delta^{\circ}$ (see  \cite{rockafellar1970convex}, Chapter 26).
	
	Also it worthwhile to mention that the notion of deterministic and stochastically perturbed best response in homogeneous populations with finitely many strategies has been studied in several occasions such as \cite{benaim1999mixed}, \cite{hofbauer2002global}, \cite{hofbauer2005learning}, \cite{hofbauer2007evolution} to name a few. In the recent past, \cite{mertikopoulos2016learning} considers reinforcement learning under regularization. 
	In the context of homogeneous population games with a continuum of strategies, the study of perturbed best response dynamics has been generalized to logit best response in \cite{lahkar2015logit} and \cite{perkins2014stochastic}. Very recently, a general class of perturbed best response dynamics generated via an arbitrary perturbation function (regularizer in our case) has been considered in \cite{lahkar2022generalized}. In this paper, we thus focus on extending the notion of perturbed best response to heterogeneous population games with finitely many strategies.

	\subsection{Some examples of regularizers}
	Below we provide some examples of widely used regularizers which have been extensively used in the literature of evolution and learning in games, both in the context of finite strategy games and  infinite strategy homogeneous population games. They have also been used extensively in reinforcement learning and online non-convex optimization problems (see \cite{mertikopoulos2016learning}, \cite{heliou2020online}). In the case of continuum strategy games, perturbed best response dynamics using general regularizers have been recently considered in \cite{lahkar2022generalized}.
	
	\begin{example}[\textbf{The Shannon-Gibbs entropy}]\label{ExampleShannon}
		\textit{The most classic and prototypical example of a regularizer is the well-known Shannon-Gibbs entropy which is defined as} 
		\begin{equation}
			\textbf{v}_{\textbf{s}}(\textbf{x}):=\sum_{j \in S}\textbf{x}_{j}\log \textbf{x}_{j}, \hspace{3mm} \text{ for all } \textbf{x} \in \Delta.
		\end{equation}
		\textit{As is well-known from the literature (see \cite{fudenberg1998theory}), the regularized Bayesian best response map corresponding to $\textbf{v}_{\textbf{s}}$ is such that for every $\sigma \in \Sigma$,} 
		\begin{equation}\label{LogitClosedForm}
			{\beta}^{\textbf{s}}_{\epsilon,j}[\sigma](\omega)=\dfrac{\exp(\epsilon^{-1}\mathcal{G}_{j}(\sigma,\omega))}{\sum_{i \in S}\exp(\epsilon^{-1}\mathcal{G}_{i}(\sigma,\omega))},\hspace{2mm} \text{ for all } \omega \in \Omega, \text{ and all } j \in S.
		\end{equation}
	\end{example}
	\begin{example}[\textbf{The Tsallis entropy}]\label{ExampleTsallis}
		\textit{A well-known generalization of the Shannon-Gibbs entropy is the so-called Tsallis entropy which is defined as}
		\begin{equation}
			\textbf{v}_{\textbf{t}}(\textbf{x}):=(q-{q}^2)^{-1}\sum_{j \in S}(\textbf{x}_{j}-\textbf{x}^{{q}}_{j}), \hspace{3mm} \text{ for } 0<q<1, \text{ for all } \textbf{x} \in \Delta.
		\end{equation}
		\textit{Notice that the Shannon-Gibbs entropy can be obtained from the Tsallis entropy by letting $q \to 1$. It can be shown (see for example \cite{coucheney2015penalty}, \cite{heliou2020online}) that under perturbation by $\textbf{v}_{\textbf{t}}$, the regularized Bayesian best response map takes the following form: for every $\sigma \in \Sigma$,}
		\begin{equation}
			\beta^{\textbf{t}}_{\epsilon,j}[\sigma](\omega):=\Big[\frac{\epsilon}{1-q}\Big]^{1/(1-q)}\frac{1}{(\theta^{\textbf{t}}_{\epsilon}(\sigma,\omega)-\mathcal{G}_j(\sigma,\omega))^{1/(1-q)}}, \hspace{2mm} \text{ for all } \omega \in \Omega, \text{ and all } j \in S,
		\end{equation}
		\textit{where $\theta^{\textbf{t}}(\sigma,\omega)>\max_{j \in S}\mathcal{G}_{j}(\sigma,\omega)$ is such that $\beta^{\textbf{t}}_{\epsilon}[\sigma](\omega)$ is a valid probability distribution on $S$ for every $\sigma \in \Sigma$ and for every $\omega \in \Omega$.}
	\end{example}\label{ExampleBurg}
	\begin{example}[\textbf{The log-barrier entropy}]
		\textit{Another important example of a regularizer is the logarithmic barrier entropy, which is sometimes also called the Burg entropy (see \cite{hofbauer2002global}) and is defined as}
		\begin{equation}
			\textbf{v}_{\textbf{b}}(\textbf{x}):=-\sum_{j \in S}\log \textbf{x}_{j}, \hspace{3mm} \text{ for all } \textbf{x} \in \Delta.
		\end{equation}
		\textit{In case of log-barrier regularization, the closed form expression of the regularized Bayesian best response map is such that for every $\sigma \in \Sigma$,}
		\begin{equation}
			\beta^{\textbf{b}}_{\epsilon,j}[\sigma](\omega):=\frac{\epsilon}{\theta^{\textbf{b}}_{\epsilon}(\sigma,\omega)-\mathcal{G}_j(\sigma,\omega)}, \hspace{2mm} \text{ for all } \omega \in \Omega, \text{ and all } j \in S,
		\end{equation}
		\textit{where $\theta^{\textbf{b}}_{\epsilon}(\sigma,\omega)>\max_{j \in S}\mathcal{G}_{j}(\sigma,\omega)$ is such that $\beta^{\textbf{b}}_{\epsilon}[\sigma](\omega)$ is a valid probability distribution on $S$ for every $\sigma \in \Sigma$ and for every $\omega \in \Omega$ (see \cite{heliou2020online} for the continuum strategy counterpart).}
	\end{example}

	Before moving ahead with the notion of regularized Bayesian best response, it is necessary to ensure that $\beta_{\epsilon}$ is well-defined in the sense that $\beta_{\epsilon}[\sigma]:\Omega \to \Delta$ is a $\xi$-measurable mapping for every $\sigma \in \Sigma$. To this end, we have the following proposition.
	\begin{proposition}\label{Prop}
		For every $\epsilon>0$ and $\sigma \in \Sigma$, the mapping $\omega \mapsto {\beta}_{\epsilon}[\sigma](\omega)$ is $\xi$-measurable.
	\end{proposition}
	\begin{proof}
		See Appendix \ref{AppProp}.
	\end{proof}

	\subsection{Regularized Bayesian best response dynamic}
	In this section, we seek to define the notion of the regularized Bayesian best response dynamic on the space of Bayesian strategies. In what follows, we first extend the space of Bayesian strategies to the space of \textbf{integrable signed Bayesian strategies} which is defined below as: 
	\begin{equation}\label{IntegrableSignedBayStr}
		\widehat{\Sigma}:=\Big\{\widehat{\sigma}:\Omega \to \mathbb{R}^{n} \text{ such that }\widehat{\sigma} \text{ is } {\xi}-\text{measurable and } \vertiii{\widehat{\sigma}}:=\bigintssss_{\Omega}\|\widehat{\sigma}(\omega)\|{\xi}(d\omega)<\infty\Big\}.\footnote{Thus, we identify the space of integrable signed Bayesian strategies by the B\"ochner space $\mathcal{L}^{1}(\Omega,{\xi},\mathbb{R}^{n})$. See Appendix \ref{SubSecWeakTopRelWeakCpt}.}
	\end{equation}

	\begin{definition}\label{DefRBBRLearning}
		{Let $\epsilon>0$ be a noise parameter and let the {RBBR} map be as defined in (\ref{RBBRMap}). Then the \textbf{regularized Bayesian best response learning dynamic} is defined as}
		\begin{equation}\label{RBBRLearning}
			\dot{\sigma}={{\beta}}_{\epsilon}[\sigma]-\sigma.\footnote{We review the notions of continuity and differentiability of trajectories through the space $(\widehat{\Sigma},\vertiii{\hspace{0.5mm}\cdot\hspace{0.5mm}})$ from \cite{ely2005evolution}. Let $(\widehat{\sigma}_{t})_{t \geq 0}$ be a trajectory through $\widehat{\Sigma}$. An integrable signed Bayesian strategy  $\widehat{\sigma} \in \widehat{\Sigma}$ is a \textit{strong limit} of $(\widehat{\sigma}_{t})_{t \geq 0}$ as $t$ approaches $t_0$ if $\lim_{t \to t_0}\vertiii{\widehat{\sigma}_{t}-\widehat{\sigma}}=0$.
				The  trajectory $(\widehat{\sigma}_{t})_{t \geq 0}$ is \textit{continuous} at time $t$ if $\lim_{s \to t}\vertiii{\widehat{\sigma}_{s}-\widehat{\sigma}_{t}}=0$. We say that the trajectory $(\widehat{\sigma}_{t})_{t \geq 0}$ is continuous if it is continuous at every $t \geq 0$. The trajectory $(\widehat{\sigma}_t)_{t \geq 0}$ is \textit{differentiable} if for every $t \geq 0$, there exists $\dot{\widehat{\sigma}_t} \in \widehat{\Sigma}$ such that $$\lim_{\epsilon \longrightarrow 0}\vertiii{\dfrac{{\widehat{\sigma}_{t+\epsilon}-\widehat{\sigma}_t}}{\epsilon}-\dot{\widehat{\sigma}_t}}=0.$$} 
		\end{equation}
	\end{definition}
	From Definition \ref{DefRBBRLearning}, it follows that under the {RBBR} learning dynamic, a Bayesian strategy $\sigma$ moves towards its regularized Bayesian best response ${{\beta}}_{\epsilon}[\sigma]$. A rest point of the {RBBR} learning dynamic is a Bayesian strategy $\sigma^{\circ}$ satisfying $\dot{\sigma}^{\circ}\equiv 0$. Thus any rest point of the dynamic (\ref{RBBRLearning}) is a Bayesian strategy $\sigma^{\circ}$ which satisfies ${{\beta}}_{\epsilon}[\sigma^{\circ}]=\sigma^{\circ}$, and hence becomes a fixed point of the regularized Bayesian best response map. Such a fixed point is called a \textbf{regularized Bayesian equilibrium}. 
	Let $\text{RBE}(\mathcal{G},{\xi},\epsilon)$ be the collection of all regularized Bayesian equilibrium of the game $\mathcal{G}$ {with noise parameter $\epsilon$} under type measure ${\xi}$, that is,  ${\text{RBE}}(\mathcal{G},{\xi};\epsilon):=\{\sigma^{\circ} \in \Sigma:{{\beta}}_{\epsilon}[\sigma^{\circ}]=\sigma^{\circ}\}$. We now state the existence theorem for regularized Bayesian equilibria in Bayesian population games.

	\begin{theorem}\label{ThmFixedPt}
		Suppose that the Bayesian population game $\mathcal{G}:\Sigma \times \Omega \to \mathbb{R}^n$ is weakly continuous. Then for every $\epsilon>0$, the RBBR mapping $\sigma \mapsto {\beta}_{\epsilon}(\sigma)$ admits a fixed point, that is, there exists $\sigma^{\circ}_{\epsilon} \in \Sigma$ such that ${\beta}_{\epsilon}(\sigma^{\circ}_{\epsilon})=\sigma^{\circ}_{\epsilon}$.
	\end{theorem}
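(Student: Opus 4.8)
The plan is to apply the Brouwer--Schauder--Tychonoff fixed point theorem to $\bm{\beta}_{\epsilon}$ regarded as a self-map of $\Sigma$, where $\Sigma$ is viewed as a subset of the Banach space $\widehat{\Sigma}=\mathcal{L}^{1}(\Theta,\bm{\xi},\mathbb{R}^{n})$ endowed with the \emph{weak} topology. Thus the proof will have three ingredients: (i) $\Sigma$ is a nonempty, convex, weakly compact subset of $\widehat{\Sigma}$; (ii) $\bm{\beta}_{\epsilon}$ maps $\Sigma$ into itself and is continuous for the weak topology; and (iii) an application of the fixed point theorem, which is legitimate since $(\widehat{\Sigma},\text{weak})$ is a Hausdorff locally convex topological vector space.

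For (i) I would argue as follows. $\Sigma$ is nonempty (it contains the constant strategies) and convex, since $\Delta(S)$ is convex and convex combinations of Bayesian strategies are taken pointwise. Every $\sigma\in\Sigma$ takes values in the bounded set $\Delta(S)$, so $\Sigma$ is norm-bounded and, as $\bm{\xi}$ is a probability measure, trivially uniformly integrable; since $\mathbb{R}^{n}$ is finite-dimensional, the vector-valued Dunford--Pettis theorem then gives that $\Sigma$ is relatively weakly compact. Moreover $\Sigma$ is norm-closed in $\widehat{\Sigma}$ --- a strong limit has an a.e.\ convergent subsequence and $\Delta(S)$ is closed --- so, being convex and norm-closed, it is weakly closed by Mazur's theorem. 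Hence $\Sigma$ is weakly compact and convex.

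The heart of the matter is (ii), and this is where I expect the main obstacle. First I would record that the \emph{finite-dimensional} regularized best response $C_{\epsilon}\colon\mathbb{R}^{n}\to\Delta(S)^{\circ}$, $C_{\epsilon}(\bm{z}):=\arg\max_{\bm{y}\in\Delta(S)}\langle\bm{y},\bm{z}\rangle-\epsilon v(\bm{y})$, is well-defined and Lipschitz: strong convexity of $v$ and the blow-up of $\|\nabla v\|$ at $\partial\Delta(S)$ force the maximizer into $\Delta(S)^{\circ}$ and make it unique, with optimality condition $\bm{z}-\epsilon\nabla v(C_{\epsilon}(\bm{z}))\in\mathbb{R}\bm{1}$; subtracting this identity at two points $\bm{z}_1,\bm{z}_2$, pairing with $C_{\epsilon}(\bm{z}_1)-C_{\epsilon}(\bm{z}_2)$ (which is orthogonal to $\bm{1}$), and using $\bm{\gamma}$-strong convexity yields $\|C_{\epsilon}(\bm{z}_1)-C_{\epsilon}(\bm{z}_2)\|\le(\epsilon\bm{\gamma})^{-1}\|\bm{z}_1-\bm{z}_2\|$. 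By Proposition~\ref{Prop}, $\bm{\beta}_{\epsilon}[\sigma]=C_{\epsilon}\circ\mathcal{G}(\sigma,\cdot)$ is $\bm{\xi}$-measurable and it is $\Delta(S)$-valued, so $\bm{\beta}_{\epsilon}(\Sigma)\subseteq\Sigma$. For weak continuity of $\bm{\beta}_{\epsilon}$ on $\Sigma$: since $\mathcal{L}^{1}(\Theta,\bm{\xi},\mathbb{R}^{n})$ is separable, every weakly compact subset of it is metrizable in the weak topology, so it suffices to verify sequential weak continuity. If $\sigma_{k}\to^{w}\sigma$ in $\Sigma$, the hypothesis that $\mathcal{G}$ is weakly continuous gives $\mathcal{G}(\sigma_{k},\theta)\to\mathcal{G}(\sigma,\theta)$ for $\bm{\xi}$-a.e.\ $\theta$, whence $\bm{\beta}_{\epsilon}[\sigma_{k}](\theta)=C_{\epsilon}(\mathcal{G}(\sigma_{k},\theta))\to C_{\epsilon}(\mathcal{G}(\sigma,\theta))=\bm{\beta}_{\epsilon}[\sigma](\theta)$ for a.e.\ $\theta$; these integrands are uniformly bounded, so dominated convergence yields $\vertiii{\bm{\beta}_{\epsilon}[\sigma_{k}]-\bm{\beta}_{\epsilon}[\sigma]}\to0$, i.e.\ strong --- hence, a fortiori, weak --- convergence. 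Therefore $\bm{\beta}_{\epsilon}$ is weakly continuous on $\Sigma$.

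Finally, for (iii), applying the Brouwer--Schauder--Tychonoff theorem (\cite{charalambos2013infinite}) to the weakly continuous self-map $\bm{\beta}_{\epsilon}$ of the nonempty convex weakly compact set $\Sigma$ produces $\sigma^{\circ}_{\epsilon}\in\Sigma$ with $\bm{\beta}_{\epsilon}(\sigma^{\circ}_{\epsilon})=\sigma^{\circ}_{\epsilon}$, as desired. The only genuinely delicate step is (ii): the weak topology is forced on us because compactness of $\Sigma$ is unavailable in the strong topology, yet one must then reconstruct weak continuity of $\bm{\beta}_{\epsilon}$ out of the essentially pointwise information carried by weak continuity of $\mathcal{G}$; the Lipschitz estimate for $C_{\epsilon}$, the passage from a.e.\ convergence to $\mathcal{L}^{1}$ convergence via dominated convergence, and the metrizability of weakly compact subsets of the separable space $\mathcal{L}^{1}(\Theta,\bm{\xi},\mathbb{R}^{n})$ are the three facts that make it work.
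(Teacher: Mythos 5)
Your proposal is correct and follows essentially the same route as the paper's proof: weak compactness of $\Sigma$ via the Dunford--Pettis criterion, weak (sequential) continuity of $\bm{\beta}_{\epsilon}$ from the $(\epsilon\bm{\gamma})^{-1}$-Lipschitz property of the regularized argmax together with $\bm{\xi}$-a.e.\ convergence and dominated convergence, and then the Brouwer--Schauder--Tychonoff theorem. The only differences are refinements on your side --- the Mazur argument for weak closedness, the explicit optimality-condition derivation of the Lipschitz bound instead of a citation, and the metrizability remark justifying that sequential continuity suffices --- which tighten steps the paper treats more briskly.
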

	\begin{proof}
		See Appendix \ref{AppFixedPoint}.
	\end{proof}	

	As a corollary to the above theorem, we have the following result which states the existence of regularized equilibrium in Bayesian aggregative population games. 
	
	\begin{corollary}\label{CorBayEqExist}
		Let $\mathcal{G}:\Sigma \times \mathcal{C}_n(\Delta) \rightarrow \mathbb{R}^n$ be a Bayesian aggregative population game as defined in Example \ref{DefAggregateBayPopGame}. Then for every noise parameter $\epsilon>0$, there exists a regularized Bayesian equilibrium of the game $\mathcal{G}$.
	\end{corollary}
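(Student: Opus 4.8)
The plan is to obtain Corollary \ref{CorBayEqExist} as a direct consequence of Theorem \ref{ThmFixedPt}. By that theorem, all that is needed is to verify that a Bayesian aggregative population game $\mathcal{G}(\sigma,\theta)=\theta(\mathcal{E}(\sigma))$ on the type space $(\mathcal{C}(\Delta;\mathbb{R}^n),\bm{\xi})=(\Theta,\bm{\xi})$ is weakly continuous, i.e. that for every $\theta\in\Theta$ (in particular $\bm{\xi}$-a.e. $\theta$) the map $\sigma\mapsto\theta(\mathcal{E}(\sigma))$ is continuous from $(\Sigma,\text{weak})$ into $(\mathbb{R}^n,\text{Euclidean})$. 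Granting this, Theorem \ref{ThmFixedPt} supplies, for each $\epsilon>0$, a fixed point $\sigma^{\circ}_{\epsilon}\in\Sigma$ of $\bm{\beta}_{\epsilon}$, and by the discussion preceding Definition \ref{DefRBBRLearning} such a fixed point is exactly a regularized Bayesian equilibrium.

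The one substantive point is the weak continuity of the expectation operator $\mathcal{E}:\Sigma\to\Delta(S)\subseteq\mathbb{R}^n$. For each coordinate $i\in S$ I would write $\mathcal{E}^{i}(\sigma)=\int_{\Theta}\sigma^{i}(\theta)\,\bm{\xi}(d\theta)=\int_{\Theta}\langle\sigma(\theta),\widehat{\rho}_{i}\rangle(\theta)\,\bm{\xi}(d\theta)$, where $\widehat{\rho}_{i}\in\mathcal{L}^{\infty}(\Theta,\bm{\xi},{\mathbb{R}^n}^{*})$ is the constant function equal to the $i$-th coordinate functional on $\mathbb{R}^n$; such constant essentially bounded functions do lie in $\mathcal{L}^{\infty}(\Theta,\bm{\xi},{\mathbb{R}^n}^{*})$. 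Hence, by the very definition of the weak topology on $\widehat{\Sigma}$, if $\sigma_{\alpha}\to^{w}\sigma$ in $\Sigma$ then $\mathcal{E}^{i}(\sigma_{\alpha})\to\mathcal{E}^{i}(\sigma)$ for every $i$, so $\mathcal{E}(\sigma_{\alpha})\to\mathcal{E}(\sigma)$ in $\mathbb{R}^n$; that is, $\mathcal{E}$ is weak-to-Euclidean continuous. (Since the weak topology need not be metrizable I would phrase this with nets rather than sequences, but the reduction to the defining linear functionals makes the argument automatic either way.)

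To finish, fix $\theta\in\Theta$. By the definition of $\Theta$ in Example \ref{DefAggregateBayPopGame}, $\theta:\Delta(S)\to\mathbb{R}^n$ is continuous in the Euclidean topology, so the composition $\sigma\mapsto\theta(\mathcal{E}(\sigma))=\mathcal{G}(\sigma,\theta)$ is continuous from $(\Sigma,\text{weak})$ into $\mathbb{R}^n$. As this holds for every $\theta$, hence $\bm{\xi}$-a.e., $\mathcal{G}$ is a weakly continuous Bayesian population game, and Theorem \ref{ThmFixedPt} applies verbatim to give the claimed $\sigma^{\circ}_{\epsilon}$ with $\bm{\beta}_{\epsilon}(\sigma^{\circ}_{\epsilon})=\sigma^{\circ}_{\epsilon}$ for each $\epsilon>0$. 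I do not expect any serious obstacle here: the genuinely delicate ingredients — weak compactness of $\Sigma$ via the Dunford--Pettis theorem and weak continuity of $\bm{\beta}_{\epsilon}$ — are already packaged inside Theorem \ref{ThmFixedPt}, so the only thing to check for the corollary is the weak continuity of $\mathcal{E}$ above, which is essentially immediate.
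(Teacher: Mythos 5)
Your proposal is correct and follows essentially the same route as the paper: reduce to weak continuity of $\mathcal{G}$ via Theorem \ref{ThmFixedPt}, prove weak-to-Euclidean continuity of $\mathcal{E}$ by testing weak convergence against the constant coordinate functionals in $\mathcal{L}^{\infty}(\Theta,\bm{\xi},{\mathbb{R}^n}^{*})$ (the paper's projection maps $\bm{\pi}_i$), and then compose with the continuous $\theta$. Your remark about using nets rather than sequences is a minor technical refinement over the paper's sequential phrasing but does not alter the argument.
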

	\begin{proof}
		See Appendix \ref{AppCorBayEqExist}.
	\end{proof}	
	We now aim to prove the convergence of regularized Bayesian equilibria to the Bayesian equilibria of the underlying game $\mathcal{G}$ when we allow the noise parameter to be arbitrarily small. In such a scenario, we can then argue that when the noise parameter is close to $0$, the regularized Bayesian equilibrium acts as a smoothed approximation of the original Bayesian equilibrium relative to the underlying weak topology. Such approximation results have been considered, for example in \cite{hofbauer2002global} for finite strategy games and in \cite{lahkar2015logit}, \cite{lahkar2022generalized} for continuum strategy homogeneous population games. Thus, this calls for an extension of such a result in heterogeneous population games, towards which we dedicate the rest of the section. In what follows, we require some technical conditions on the regularizer. We begin with the following definition.
	\begin{definition}
		For a extended real valued function $f:\mathbb{R}^n \to \mathbb{R}\cup\{-\infty,\infty\}$, the \textit{convex conjugate} $f^{*}:\mathbb{R}^n \to \mathbb{R}\cup\{-\infty,\infty\}$ is defined by
		\begin{equation*}
			f^*(\textbf{y}):=\sup_{\textbf{x} \in \mathbb{R}^n}(\langle \textbf{y},\textbf{x} \rangle -f(\textbf{x})), \hspace{3mm} \text{ for all } \textbf{y} \in \mathbb{R}^n. 
		\end{equation*}
	\end{definition}
	
	We also require two technical conditions on the regularizer, namely Condition C1 and Condition C2. We provide some heuristics behind them prior to the definition of both the conditions. 
	
	Condition C1 requires that as the noise level in the learning dynamic is gradually vanishes, the population, while playing their best response at a particular population state must concentrate themselves over that action which has the maximum payoff at that state in the limit. For $\epsilon>0$, let us denote $\epsilon\textbf{v}$ by $\textbf{v}_{\epsilon}$. Then, Condition C1 is defined as follows.
	
	\begin{definition}\label{ConditionC1}
		{A regularizer $\textbf{v}:\Delta \to \mathbb{R} \cup \{\infty\}$ satisfies \textbf{Condition C1} if for all $i,j \in S$ with $i \neq j$ and $\textbf{u} \in \mathbb{R}^n$ with $\textbf{u}_j>\textbf{u}_i$, there exists a function $\lambda:(0,\infty) \to \mathbb{R}$ with $\lim_{x \to 0}\lambda(x)= \infty$ such that}
		\begin{equation}
			\nabla \textbf{v}^{*}_{\epsilon,j}(\textbf{u}) \geq \lambda(\epsilon) \nabla \textbf{v}^{*}_{\epsilon,i}(\textbf{u}), \hspace{3mm} \text{ for sufficiently small } \epsilon>0,
		\end{equation}
		{where $\nabla \textbf{v}^{*}_{\epsilon}(\textbf{u})$ denotes that gradient of the convex conjugate $\textbf{v}^{*}_{\epsilon}$ of $\textbf{v}_{\epsilon}$ at the point $\textbf{u} \in \mathbb{R}^n$.} 
	\end{definition}

	The following condition (Condition C2) ensures that a small change in the population states can only lead to a small change in the value of the regularizer. Before defining the condition technically, we require the following notion; that of a shift of a Bayesian strategy.
	
	\begin{definition}\label{DefShift}
		{Let $\sigma \in \Sigma$ be a Bayesian strategy, and let $\bar{\Omega} \subseteq \Omega$ be such that ${\xi}(\bar{\Omega})>0$. We say that $\widetilde{\sigma}$ is a \textbf{shift} of $\sigma$ over $\bar{\Omega}$ from $\textbf{i}$ to $\textbf{j}$, where $\textbf{i},\textbf{j}:\bar{\Omega}\to S$ with $\textbf{i}(\omega)\neq \textbf{j}(\omega)$ for all $\omega \in \bar{\Omega}$, if for all $\omega \in \bar{\Omega}$, the following three conditions are satisfied:}
		\begin{itemize}
			\item $\widetilde{\sigma}^{\textbf{i}(\omega)}(\omega) \leq {\sigma}^{\textbf{i}(\omega)}(\omega)$,
			\vspace{-2mm}
			\item $\widetilde{\sigma}^{\textbf{j}(\omega)}(\omega)=\sigma^{\textbf{j}(\omega)}(\omega)+(\sigma^{\textbf{i}(\omega)}(\omega)-\widetilde{\sigma}^{\textbf{i}(\omega)}(\omega))$, {and} 
			\vspace{-2mm}
			\item $\widetilde{\sigma}^{k}(\omega)=\sigma^{k}(\omega)$, {for all} $k \neq \textbf{i}(\omega),\textbf{j}(\omega)$. 
		\end{itemize}	
	\end{definition}
	
	We now proceed to state the Condition C2 which calls for a `mild' continuity condition on the regularizer in the sense that small changes in the population state does not lead to abrupt changes in the regularizer. We clarify the reader that the required notion continuity imposed on the regularizer is much weaker than the usual notion of continuity for extended real valued functions.

	\begin{definition}\label{ConditionC2}
		Let $(\sigma_m)_{m \geq 1}$ be a sequence of Bayesian strategies and let $\bar{\Omega} \subseteq \Omega$ be such that $\xi(\bar{\Omega})>0$. We say that a regularizer $\textbf{v}$ satisfies \textbf{Condition C2} with respect to the pair $((\sigma_m)_{m \geq 1},\bar{\Omega})$ if for all $\omega \in \bar{\Omega}$, all distinct $\textbf{i},\textbf{j}:\bar{\Omega}\to S$, with
		\begin{itemize}
			\item $\lim_{m \to \infty}\sigma^{\textbf{i}(\omega)}_m(\omega) > 0$,
			\vspace{-2mm}
			\item $\mathcal{G}^{\textbf{j}(\omega)}(\sigma_m,\omega)>\mathcal{G}^{\textbf{i}(\omega)}(\sigma_m,\omega)$ for sufficiently large $m$ (depending on $\omega$),
		\end{itemize}
		there exists $(\widetilde{\sigma}_m)_{m \geq 1}$, where $\widetilde{\sigma}_m$ is a shift of $\sigma_m$ over $\bar{\Omega}$ from $\textbf{i}$ to $\textbf{j}$, and a constant $\kappa_i(\omega) >0$ (independent of $m$) such that
		\begin{equation}
			|\textbf{v}(\sigma_{m}(\omega))-\textbf{v}(\widetilde{\sigma}_m(\omega))| \leq \kappa_{\textbf{i}(\omega)}(\omega) (\sigma^{\textbf{i}(\omega)}_m(\omega)-\widetilde{\sigma}^{\textbf{i}(\omega)}_m(\omega)),
		\end{equation}
		for sufficiently large $m$ (depending on $\omega$).	
		
		We say that a regularizer $\textbf{v}$ satisfies \textbf{Condition C2} with respect to a sequence of Bayesian strategies $(\sigma_m)_{m \geq 1}$ if it satisfies Condition C2 with respect to the pair $((\sigma_m)_{m \geq 1},\bar{\Omega})$ for every $\bar{\Omega} \subseteq \Omega$ with $\xi(\bar{\Omega})>0$.
		
	\end{definition}
	
	Thus, we have the following approximation theorem.
	
	\begin{theorem}\label{ThmBayesianAccuPoints}
		Let $(\epsilon_m)_{m \geq 1}$ be a sequence of positive noise parameters converging to $0$ and let $(\sigma^{\circ}_m)_{m \geq 1}$ be the corresponding sequence of $\epsilon_m$-regularized Bayesian equilibria such that $\sigma^{\circ}_m \in \text{RBE}(\mathcal{G},{\xi},\epsilon_m)$ for all $m \geq 1$. Suppose that the regularizer $\textbf{v}$ satisfies either Condition C1, or Condition C2 with respect to the sequence of Bayesian strategies $(\sigma^{\circ}_m)_{m \geq 1}$. Then $(\sigma^{\circ}_m)_{m \geq 1}$ has accumulation points with respect to the weak topology and any such accumulation point is a Bayesian equilibrium of the original game $\mathcal{G}$. 
	\end{theorem}
	\begin{proof}
		See Appendix \ref{AppThmBayesianAccuPoints}. 
	\end{proof}

	\section{{Fundamental properties of {{RBBR}} learning dynamic}}\label{SectionFundamentalProp}
	
	In this section, we establish two fundamental properties of the regularized Bayesian best response dynamic, namely,  the existence of solutions from arbitrary initial conditions in $\Sigma$ and continuity of these solutions with respect to the initial states. We resort to the infinite dimensional Picard-Lindel\"off theorem and the Gr\"onwall's inequality (see \cite{zeidler1986nonlinear}) in order to prove the main results of this section. We begin with the following two definitions.
	
	\begin{definition}
		{A Bayesian population game $\mathcal{G}:\Sigma \times \Omega \rightarrow \mathbb{R}^n$ is said to be \textbf{strongly Lipschitz} if there exists $\kappa>0$ such that for ${\xi}$-{a.e} $\omega \in \Omega$,  $$\|\mathcal{G}(\sigma,\omega)-\mathcal{G}(\rho,\omega)\|\leq \kappa\vertiii{\sigma-\rho}, \hspace{3mm} \text{ for all } \sigma,\rho \in \Sigma.$$}
	\end{definition}
	
	\begin{definition}\label{DefSemiflow}
		{Consider an abstract differential equation  $${\phi}'(t)=\Lambda({\phi}(t))$$ on a Banach space $(\mathbb{X},\|\cdot\|_{\mathbb{X}})$. Suppose that a unique solution to the above differential equation exists for every initial condition ${\phi}(0)=\gamma$ and denote the solution with initial condition $\gamma$ by ${\phi}_\gamma(t)$. Then the \textit{semiflow of the dynamic} is the map ${\zeta}:\mathbb{X} \times [0,\infty) \rightarrow \mathbb{X}$ defined as ${\zeta}(\gamma,t)={\phi}_\gamma(t)$ for all $\gamma \in \mathbb{X}$ and $t \in [0,\infty)$.} 
	\end{definition}
	
	We now state the main theorem of this section.
	\begin{theorem}\label{ThmSolnExistence}
		Consider the type space $(\Omega,{\xi})$. Suppose that the Bayesian population game $\mathcal{G}:\Sigma \times \Omega \rightarrow \mathbb{R}^n$ is strongly Lipschitz. Then the following statements hold true:
		\begin{enumerate}[(i)]
			\item For every initial condition $\sigma_{0} \in \Sigma$, the {RBBR} dynamic admits a unique solution $(\sigma_{t})_{t \geq 0}$.
			\item The semiflow ${\zeta}:\widehat{\Sigma} \times [0,\infty) \to \widehat{\Sigma}$ of the {RBBR} dynamic is continuous in the initial conditions with respect to the strong topology on $\widehat{\Sigma}$.
		\end{enumerate}
	\end{theorem}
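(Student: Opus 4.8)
The plan is to verify the hypotheses of the infinite-dimensional Picard--Lindel\"of theorem for the vector field $\Lambda(\sigma):=\bm{\beta}_{\epsilon}(\sigma)-\sigma$ on the Banach space $(\widehat{\Sigma},\vertiii{\hspace{0.5mm}\cdot\hspace{0.5mm}})$, and then deduce continuous dependence on initial conditions from Gr\"onwall's inequality. First I would establish that $\Lambda$ is Lipschitz on $\widehat{\Sigma}$ (or at least locally Lipschitz near $\Sigma$), which reduces to showing that $\sigma\mapsto\bm{\beta}_{\epsilon}(\sigma)$ is Lipschitz in the strong norm. The key pointwise estimate is that for each fixed $\theta$, the map $\bm{p}\mapsto\arg\max_{\bm{y}\in\Delta(S)}\langle\bm{y},\bm{p}\rangle-\epsilon v(\bm{y})$ is Lipschitz from $(\mathbb{R}^n,\|\cdot\|)$ to $(\Delta(S),\|\cdot\|)$ with a constant depending only on $\epsilon$ and the strong-convexity modulus $\bm{\gamma}$ of $v$: indeed, by $\bm{\gamma}$-strong convexity of $\epsilon v$ (with modulus $\epsilon\bm{\gamma}$), the maximizer map of $\langle\cdot,\bm{p}\rangle-\epsilon v(\cdot)$ is $(\epsilon\bm{\gamma})^{-1}$-Lipschitz in $\bm{p}$ --- this is the standard stability estimate for argmax of a strongly concave perturbation of a linear functional. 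Combining this with the strongly Lipschitz hypothesis $\|\mathcal{G}(\sigma,\theta)-\mathcal{G}(\rho,\theta)\|\leq\kappa\vertiii{\sigma-\rho}$ for $\bm{\xi}$-a.e.\ $\theta$, I get $\|\bm{\beta}_{\epsilon}[\sigma](\theta)-\bm{\beta}_{\epsilon}[\rho](\theta)\|\leq(\epsilon\bm{\gamma})^{-1}\kappa\vertiii{\sigma-\rho}$ pointwise a.e., and integrating over $\Theta$ against $\bm{\xi}$ yields $\vertiii{\bm{\beta}_{\epsilon}(\sigma)-\bm{\beta}_{\epsilon}(\rho)}\leq(\epsilon\bm{\gamma})^{-1}\kappa\vertiii{\sigma-\rho}$. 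Hence $\Lambda$ is globally Lipschitz on $\widehat{\Sigma}$ with constant $L:=(\epsilon\bm{\gamma})^{-1}\kappa+1$.

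Next, for part (i), I would invoke the Picard--Lindel\"of theorem (as in \cite{zeidler1986nonlinear}) on the Banach space $\widehat{\Sigma}$: since $\Lambda$ is globally Lipschitz (hence continuous) and bounded on the bounded set $\Sigma$ --- note $\bm{\beta}_{\epsilon}(\sigma)\in\Sigma$ so $\vertiii{\Lambda(\sigma)}\leq\vertiii{\bm{\beta}_{\epsilon}(\sigma)}+\vertiii{\sigma}\leq 2$ for $\sigma\in\Sigma$ --- there exists a unique local solution from any $\sigma_0\in\Sigma$, and the global Lipschitz bound lets the solution be continued to all $t\geq 0$. A separate point that needs care is \emph{forward invariance of $\Sigma$}: I must check that the solution starting in $\Sigma$ stays in $\Sigma$ (so that it is a genuine trajectory of Bayesian strategies and $\bm{\beta}_{\epsilon}$ is applied to legitimate arguments). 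This follows by writing the solution in mild/Duhamel form $\sigma_t=e^{-t}\sigma_0+\int_0^t e^{-(t-s)}\bm{\beta}_{\epsilon}(\sigma_s)\,ds$, observing that $e^{-t}+\int_0^t e^{-(t-s)}ds=1$, and that $\Sigma$ is convex and closed in the strong topology; thus $\sigma_t$ is a convex combination (a Bochner-integral average) of elements of $\Sigma$, hence in $\Sigma$. (One should confirm pointwise a.e.\ that $\sigma_t(\theta)\in\Delta(S)$, which again follows from convexity of $\Delta(S)$ applied inside the integral.)

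For part (ii), let $\sigma^{(1)}_0,\sigma^{(2)}_0\in\Sigma$ with solutions $\sigma^{(1)}_t,\sigma^{(2)}_t$. Subtracting the Duhamel representations and using the Lipschitz bound on $\bm{\beta}_{\epsilon}$ gives
\begin{equation*}
\vertiii{\sigma^{(1)}_t-\sigma^{(2)}_t}\leq e^{-t}\vertiii{\sigma^{(1)}_0-\sigma^{(2)}_0}+(\epsilon\bm{\gamma})^{-1}\kappa\int_0^t e^{-(t-s)}\vertiii{\sigma^{(1)}_s-\sigma^{(2)}_s}\,ds,
\end{equation*}
and applying Gr\"onwall's inequality to $t\mapsto e^{t}\vertiii{\sigma^{(1)}_t-\sigma^{(2)}_t}$ yields $\vertiii{\sigma^{(1)}_t-\sigma^{(2)}_t}\leq e^{((\epsilon\bm{\gamma})^{-1}\kappa-1)t}\vertiii{\sigma^{(1)}_0-\sigma^{(2)}_0}$, which is exactly continuity (indeed local Lipschitz continuity, uniformly on compact time intervals) of the semiflow $\bm{\zeta}(\cdot,t)$ in the initial condition with respect to the strong topology. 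I expect the main obstacle to be the first step: carefully proving the $(\epsilon\bm{\gamma})^{-1}$-Lipschitz bound for the regularized argmax map, since $v$ is only guaranteed smooth and strongly convex on the relative interior $\Delta(S)^{\circ}$ while its gradient blows up at $\partial\Delta(S)$ --- so the comparison of two maximizers must be done via the first-order optimality/variational inequality characterization (the maximizers automatically lie in $\Delta(S)^{\circ}$ by the boundary gradient condition) rather than naively differentiating, and one must track that the strong-convexity constant survives the restriction. The Bochner-measurability and integrability bookkeeping needed to legitimately pass from the pointwise estimates to the strong-norm estimates is routine given Proposition \ref{Prop}, but should be stated.
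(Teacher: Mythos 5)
Your proposal is correct and follows essentially the same route as the paper: the key step in both is the bound $\vertiii{\bm{\beta}_{\epsilon}(\sigma)-\bm{\beta}_{\epsilon}(\rho)}\leq(\epsilon\bm{\gamma})^{-1}\kappa\vertiii{\sigma-\rho}$, obtained from the $(\epsilon\bm{\gamma})^{-1}$-Lipschitz property of the regularized argmax (i.e.\ $\nabla v^{*}_{\epsilon}$) together with the strongly Lipschitz hypothesis on $\mathcal{G}$, followed by the infinite-dimensional Picard--Lindel\"of theorem for part (i) and Gr\"onwall's inequality for part (ii). Your additional Duhamel-form argument for forward invariance of $\Sigma$ (and the resulting sharper exponential constant) is a refinement the paper's proof passes over silently, and is a welcome extra check rather than a deviation in method.
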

	\begin{proof}
		See Appendix \ref{AppThmSolnExistence}.
	\end{proof}	
	
	Following is a corollary, which states that the above result holds true for Bayesian aggregative population games in Example \ref{DefAggregateBayPopGame}. In what follows we require the following definition.
	
	\begin{definition}
		{A population game $\omega:\Delta \rightarrow \mathbb{R}^{n}$ is said to be Lipschitz if there exists a real number $\alpha >0$ such that} 
		$$\|\omega(\textbf{x})-\omega(\textbf{y})\| \leq \alpha \|\textbf{x}-\textbf{y}\|, \hspace{3mm} \text{ for all } \textbf{x},\textbf{y} \in \Delta.$$
	\end{definition}	
	
	Let $\text{Lip}_{\alpha}(\Delta)$ be the collection of $\alpha$-Lipschitz population games which map $\Delta$ into $\mathbb{R}^n$. Observe that under the uniform topology, $\text{Lip}_{\alpha}(\Delta)$ is a closed subset of $\mathcal{C}_n(\Delta)$. Thus $\text{Lip}_{\alpha}(\Delta)$ is a possible candidate for the support of the type measure $\xi$.
	
	\begin{corollary}\label{CorSolnExistence}

		Let $\mathcal{G}:\Sigma \times \mathcal{C}_n(\Delta) \rightarrow \mathbb{R}^n$ be a Bayesian aggregate population game. Suppose that there exists $\alpha>0$ such that $\xi(\text{Lip}_{\alpha}(\Delta))=1$. Then the conclusions of Theorem \ref{ThmSolnExistence} hold true.
		
	\end{corollary}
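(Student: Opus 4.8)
The plan is to deduce this corollary directly from Corollary \ref{CorSolnExistence} by exhibiting the set $\mathcal{C}^{F}_{\text{mat}}(\Delta;\mathbb{R}^n)$ as a subset of a set of the form $\textsf{Lip}_{\alpha}(\Delta;K)$ for suitable $\alpha>0$ and compact $K\subseteq\mathbb{R}^n$. First I would recall that a Bayesian matrix population game is a Bayesian aggregative population game in the sense of Example \ref{DefAggregateBayPopGame} restricted to the sub-type-space appearing in \eqref{SubsetMatrixGames}: the assignment $A\mapsto\theta_{A}$, with $\theta_{A}(\bm{x}):=A\bm{x}$, embeds $\mathcal{M}_{n}(\mathbb{R})$ homeomorphically into $\mathcal{C}(\Delta;\mathbb{R}^n)$, since the sup-norm of $\theta_{A}$ over $\Delta(S)$ is equivalent to any matrix norm of $A$ (note that $\Delta(S)$ contains the standard basis vectors $e_{1},\dots,e_{n}$, so $\sup_{\bm{x}\in\Delta(S)}\|A\bm{x}\|$ dominates $\max_{i}\|Ae_{i}\|$ while being bounded by $\|A\|_{\mathrm{op}}$). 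Under this identification $\mathcal{G}(\sigma,\theta_{A})=\theta_{A}(\mathcal{E}(\sigma))=A\,\mathcal{E}(\sigma)$, which is exactly the aggregative form, so Corollary \ref{CorSolnExistence} is applicable once we verify the hypothesis of its part (ii). (I would also flag that the hypothesis as typeset writes $F\subseteq\mathbb{R}$, which should read $F\subseteq\mathcal{M}_{n}(\mathbb{R})$, consistently with \eqref{SubsetMatrixGames}; the argument is unaffected.)

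The key estimate is then elementary. Since $F$ is compact, $\alpha:=\sup_{A\in F}\|A\|_{\mathrm{op}}<\infty$, and for each $A\in F$ we have $\|\theta_{A}(\bm{x})-\theta_{A}(\bm{y})\|=\|A(\bm{x}-\bm{y})\|\le\alpha\|\bm{x}-\bm{y}\|$ for all $\bm{x},\bm{y}\in\Delta(S)$, so every $\theta_{A}$ is $\alpha$-Lipschitz. Moreover the set $K:=\{A\bm{x}:A\in F,\ \bm{x}\in\Delta(S)\}$ is the image of the compact set $F\times\Delta(S)$ under the jointly continuous map $(A,\bm{x})\mapsto A\bm{x}$, hence compact, and $\theta_{A}$ maps $\Delta(S)$ into $K$ for every $A\in F$. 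Consequently $\mathcal{C}^{F}_{\text{mat}}(\Delta;\mathbb{R}^n)\subseteq\textsf{Lip}_{\alpha}(\Delta;K)$, and taking closures gives $\textsf{closure}[\mathcal{C}^{F}_{\text{mat}}(\Delta;\mathbb{R}^n)]\subseteq\textsf{closure}[\textsf{Lip}_{\alpha}(\Delta;K)]$.

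To conclude, both $\textsf{closure}[\mathcal{C}^{F}_{\text{mat}}(\Delta;\mathbb{R}^n)]$ and $\textsf{closure}(\textsf{Lip}_{\alpha}(\Delta;K))$ are closed subsets of $\mathcal{C}(\Delta;\mathbb{R}^n)$, hence Borel, so monotonicity of the type measure together with the hypothesis $\bm{\xi}(\textsf{closure}[\mathcal{C}^{F}_{\text{mat}}(\Delta;\mathbb{R}^n)])=1$ yields $\bm{\xi}(\textsf{closure}(\textsf{Lip}_{\alpha}(\Delta;K)))=1$. Thus the hypothesis of Corollary \ref{CorSolnExistence}(ii) holds for this choice of $\alpha$ and $K$, and the conclusions of Theorem \ref{ThmSolnExistence} — unique existence of a solution to the RBBR dynamic from every initial condition in $\Sigma$, and continuity of the semiflow in the initial data with respect to the strong topology — follow at once.

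There is no genuine obstacle here; the statement is a routine specialization. The only point requiring a little care is the identification of the matrix-game type space with a subspace of $\mathcal{C}(\Delta;\mathbb{R}^n)$, so that a matrix-game type measure qualifies as a type measure in the sense of Example \ref{DefAggregateBayPopGame}, together with the attendant measurability of the closed sets involved; once that is in place, the Lipschitz bound and the compactness of the common range $K$ are immediate from compactness of $F$.
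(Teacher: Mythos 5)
Your proposal is correct and follows essentially the same route the paper intends: reduce the matrix case to Corollary \ref{CorSolnExistence} by noting that every $\theta_A$ with $A\in F$ is $\alpha$-Lipschitz for $\alpha=\sup_{A\in F}\|A\|<\infty$ with range in a common compact set, so that $\mathcal{C}^{F}_{\textsf{mat}}(\Delta;\mathbb{R}^n)\subseteq\textsf{Lip}_{\alpha}(\Delta;K)$ and the measure hypothesis transfers by monotonicity. Your additional care with the compact range $K$, the closure inclusion, and the typo $F\subseteq\mathbb{R}$ versus $F\subseteq\mathcal{M}_n(\mathbb{R})$ only makes the argument more complete than the paper's brief justification.
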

	\begin{proof}
		The proof of the corollary follows immediately and is left to the reader. 
	\end{proof}	

	We now focus on some applications of the RBBR dynamic in the view to obtain convergence results for certain classes of Bayesian population games. As in case of homogeneous population games with finite or continuum strategies, we consider here the class of Bayesian potential and Bayesian negative semi-definite games. To this end, we need to restrict our attention to norm-compact subsets of the space of Bayesian strategies which are forward invariant under the RBBR dynamic. We thus provide a sufficient condition to identify such subsets on which the the solution trajectories of the RBBR dynamic can lead to nice convergence results. This calls for some regularity on the Bayesian strategies which we present in the following definition.
	
	\begin{definition}\label{DefLipBayStr}
		{A Bayesian strategy $\sigma:\Omega \rightarrow \Delta$ is said to be {Lipschitz} if there exists $\alpha>0$ such that $$\|\sigma(\omega)-\sigma(\bar{\omega})\|\leq\alpha {d}_{\Omega}(\omega,\bar{\omega}), \hspace{3mm} \text{ for all } \omega,\bar{\omega} \in \Omega.$$} 
	\end{definition}
	
	Typically, this means that if the types of any two agents are close, then the induced belief of the the two agents are also close, which is somewhat reasonable. Next we define a subset of the space of Bayesian strategies as follows. Let $M \subseteq \Delta^{\circ}$ be a compact set. Define
	\begin{equation}
		\Sigma_{M}:=\{\sigma \in \Sigma : \sigma(\omega) \in M \text{ for } {\xi}-\text{a.e } \omega \in \Omega\}.
	\end{equation}
	
	In words, $\Sigma_M$ is the collection of those Bayesian strategies such that for $\xi$-almost all types $\omega \in \Omega$, the induced belief over the actions lies in $M$. In principle this means that the beliefs induced by Bayesian strategies in $\Sigma$ are almost surely bounded away from the boundary $\partial\Delta$ of the simplex $\Delta$. As a result, the induced belief assigns a positive probability to each action in $S$. To present the main result on convergence in Bayesian potential games, we restrict attention to the following subset of Bayesian strategies. Fix $\alpha>0$, $M \subseteq \Delta^{\circ}$ compact, and $\Xi \subseteq \Omega$. Define
	\begin{equation}\label{Sigma_Alpha_M_Xi}
		\Sigma^{\alpha}_{M}(\Xi):=\{\sigma \in \Sigma: \sigma \text{ is } \alpha\text{-Lipschitz on } \Xi\} \cap \Sigma_M.
	\end{equation}

	Thus, $\Sigma^{\alpha}_{M}(\Xi)$ is the collection of Bayesian strategies which are $\alpha$-Lipschitz on the subset $\Xi$ with range ${\xi}$-a.s contained in the compact set $M \subseteq \Delta^{\circ}$. In the following lemma (Lemma \ref{ThmSigmaNormCpt}), we show that $\Sigma^{\alpha}_{M}(\Xi)$ is relatively norm compact subject to suitable conditions on the type measure ${\xi}$. We also show that $\Sigma^{\alpha}_{M}(\Xi)$ is forward invariant under the RBBR dynamic under some further conditions on the game $\mathcal{G}$ and on the Lipschitz coefficient in Definition \ref{DefLipBayStr}. As a result, the lemma provides a candidate space to prove convergence results for Bayesian potential games and Bayesian negative semidefinite games.

	\begin{lemma}\label{ThmSigmaNormCpt}
		Suppose that there exists a compact set $\Xi \subseteq \Omega$ such that ${\xi}(\Xi)=1$. Then the following statements hold true:
		\begin{enumerate}[(i)]
			\item   For every $\alpha>0$ and every $M \subseteq \Delta^{\circ}$ compact, the subset $\Sigma^{\alpha}_{M}(\Xi)$ (as defined in (\ref{Sigma_Alpha_M_Xi})) is relatively norm compact in $\widehat{\Sigma}$.
			\item Let $\epsilon,\gamma>0$, where $\textbf{v}$ is steep and $\gamma$-strongly convex. Suppose that for every $\sigma \in \Sigma$, the following assumptions are satisfied:
			\begin{itemize}
				\item $\|\mathcal{G}(\sigma,\omega)\| \leq c$, for $\xi$-{a.e} $\omega \in \Xi$, for some $c>0$.
				\item the mapping $\omega \mapsto \mathcal{G}(\sigma,\omega)$ is $\lambda$-Lipschitz on $\Xi$, for some $\lambda>0$.\footnote{We bring to notice to the readers that this condition is not very restrictive in the sense that the Bayesian aggregative population games (Example \ref{DefAggregateBayPopGame}) satisfies this condition with $\lambda=1$.}
			\end{itemize}
			Then there exists a compact set ${M} \subseteq \Delta^{\circ}$ (depending on $c$) such that $\Sigma^{\alpha}_{{M}}(\Xi)$ is forward invariant under the {RBBR} learning dynamic, provided $\alpha \geq \frac{\lambda}{\epsilon\gamma}$.
		\end{enumerate}
	\end{lemma}
	\begin{proof}
		See Appendix \ref{AppThmNormCpt}.
	\end{proof}

	\section{{Convergence in Bayesian potential games}}\label{SectionBayesianPotential}
	In this section we define the notion of Bayesian potential games and study the convergence of the {RBBR} dynamic to the regularized Bayesian equilibria in such games.{\footnote{See \cite{monderer1996potential}, \cite{sandholm2010population}, and \cite{lahkar2015logit} for a complete exposition of homogeneous potential games.} {For this purpose, we first require the notion of Fr\'echet derivative, which is a generalization of the usual derivative in Banach spaces. Let $\mathbb{V}$ and $\mathbb{W}$ be normed linear spaces. An operator  $\phi:\mathbb{V} \rightarrow \mathbb{W}$ is Fr\'echet differentiable at $x \in \mathbb{V}$ if there exists a bounded linear operator ${A}_x:\mathbb{V} \rightarrow \mathbb{W}$ such that} 
		
		{$$\lim_{\|h\|_{\mathbb{V}} \rightarrow 0}\dfrac{\|\phi(x+h)-\phi(x)-{A}_x(h)\|_{\mathbb{W}}}{\|h\|_{\mathbb{V}}}=0.$$ Thus, if ${A}$ is the Fr\'echet derivative of $\phi$, then  $\phi(x+h)=\phi(x)+{A}_x(h)+\textbf{o}(\|h\|_{\mathbb{V}})$ for all $h$ near zero in $\mathbb{V}$.}

		{In this paper, we consider Fr\'echet differentiability of functions $\phi: \widehat{\Sigma} \rightarrow \mathbb{R}$. The  Fr\'echet derivative of $\phi$ is denoted by $\text{D}\phi$. Let $\mathbb{R}^{n}_{0}$ be the collection of $n$-dimensional vectors such that the sum of the components is zero, that is, $\mathbb{R}^{n}_{0}:=\{{x}=({x}_1,\ldots,{x}_n)\in\mathbb{R}^{n}:{x}_1+\cdots+{x}_n=0\}$. Let $\Sigma_0:=\{\widehat{\sigma}\in\widehat{\Sigma}:\widehat{\sigma}(\omega)\in\mathbb{R}^{n}_{0} \text{ for } \xi-\text{a.e } \omega \in \Omega\}$. For a Fr\'echet differentiable function $\phi:\widehat{\Sigma} \rightarrow \mathbb{R}$, the \textit{Fr\'echet gradient} $\nabla^{\textbf{F}}\phi: \widehat{\Sigma} \rightarrow \mathcal{L}^{\infty}(\Omega,{\xi},{\mathbb{R}^n}^{*})$ of $\phi$ is a bounded measurable map defined as follows: for every $\widehat{\sigma} \in \widehat{\Sigma}$ and every ${\sigma}_{0} \in \Sigma_0$},
		\begin{equation}\label{GradientExist}
			{\text{D}\phi[\widehat{\sigma}](\sigma_0):=\int_{\Omega}\langle\sigma_0,\nabla^{\textbf{F}}\phi(\widehat{\sigma})\rangle(\omega){\xi}(d\omega)=\int_{\Omega}\langle \nabla^{\textbf{F}}\phi[\widehat{\sigma}](\omega),\sigma_0(\omega)\rangle{\xi}(d\omega),}
		\end{equation}
		where the inner product in above integral after the first equality denotes the bilinear pairing as in Definition \ref{DefWeakTopOnSigma} and the inner product in the integral after the second equality is the usual inner product in $\mathbb{R}^n$.
		
		\begin{definition}[\cite{zusai2018evolutionary}]
			{A mapping ${\mathcal{G}}:{\Sigma} \times \Omega \rightarrow \mathbb{R}^{n}$ is called a \textbf{Bayesian potential game} if there exists a Fr\'echet differentiable map $\varphi : \widehat{\Sigma} \rightarrow \mathbb{R}$ such that for every ${\sigma} \in {\Sigma}$, we have $$\nabla^{\textbf{F}} \varphi[{\sigma}](\omega)={\mathcal{G}}({\sigma},\omega), \hspace{3mm} {\xi}-a.s.$$ The map $\varphi$ is called the Bayesian potential function of the Bayesian potential game ${\mathcal{G}}$.} 
		\end{definition}	

	We then introduce the notion of {entropy adjusted Bayesian potential function}. For a compact set $M \subseteq \Delta^{\circ}$, we then define a map $\widetilde{\textbf{v}}:\Sigma_M \rightarrow \mathbb{R}$ such that $$\widetilde{\textbf{v}}(\sigma):=\int_{\Omega}\textbf{v}(\sigma(\omega)){\xi}(d\omega),\hspace{3mm} \text{ for all } \sigma \in \Sigma_M.$$ The fact that the regularizer $\textbf{v}$ is strongly convex on $\Delta^{\circ}$ implies that $\textbf{v}$ is continuous on $M$. Thus $\textbf{v}$ is bounded on $M$ and hence $\widetilde{\textbf{v}}$ is well defined. This leads us to the following important definition.
	
	\begin{definition}
		{For $M \subseteq \Delta^{\circ}$ compact, the \textbf{entropy adjusted Bayesian potential function} is a mapping $\widetilde{\varphi}:\Sigma_M \rightarrow \mathbb{R}$ defined as $$\widetilde{\varphi}(\sigma)=\varphi(\sigma)-\widetilde{\textbf{v}}(\sigma), \hspace{3mm} \text{ for all } \sigma \in \Sigma_M.$$}  
	\end{definition}
	
	We require an additional result for establishing convergence in Bayesian potential games. We shall say that the RBBR dynamic satisfies \textbf{Bayesian positive correlation} with respect to the game $\mathcal{G}:\Sigma \times \Omega \to \mathbb{R}^n$ if $\int_{\Omega}\langle \mathcal{G}(\sigma,\omega), \dot{\sigma}(\omega) \rangle \xi(d\omega) \geq 0$ for all $\sigma \in \Sigma$ with equality only if $\dot{\sigma}=0$. For the rest of our analysis on Bayesian potential and Bayesian negative semidefinite games, we assume that the regularizer is a $\mathcal{C}^1$ function. The reason for such an assumption will be clear from the following definition. Define the virtual payoff vector at a Bayesian strategy $\sigma \in \Sigma$ for a player of type $\omega \in \Omega$ as
	\begin{equation}\label{virtualpayoff}
		\widetilde{\mathcal{G}}(\sigma,\omega)=\mathcal{G}(\sigma,\omega)-\nabla^{\textbf{G}}\widetilde{\textbf{v}}[\sigma](\omega).\footnote{The fact that $\textbf{v}$ is smooth in the interior $\Delta^{\circ}$ and bounded on $M$ implies that $\widetilde{\textbf{v}}$ id G\^ateaux differentiable}
	\end{equation}
	
	The following lemma states that the RBBR dynamic satisfies Bayesian positive correlation with respect to the virtual payoff $\widetilde{\mathcal{G}}$. The lemma imposes a further condition (\ref{Decomposable}) on the regularizer. Let $\mathcal{C}^1((0,1))$ denote the collection of all continuously differentiable functions $\theta:(0,1) \to \mathbb{R}$.
	\begin{definition}
		{A regularizer $\textbf{v}:\Delta \to \mathbb{R}^n \cup \{\infty\}$ is said to be \textbf{decomposable} if there exists a $\mathcal{C}^1$ function $\theta$ such that the regularizer $\textbf{v}$ can be expressed as}
		\begin{equation}\label{Decomposable}
			\textbf{v}(\textbf{x}):=\sum\limits_{i=1}^{n}\theta(\textbf{x}_i), \hspace{3mm} \text{ for all } \textbf{x} \in \Delta^{\circ}.\footnote{Such regularizers have been consider earlier, for instance in \cite{coucheney2015penalty}, \cite{mertikopoulos2016learning}.}  
		\end{equation} 
	\end{definition}
	The three entropies we consider, namely the Shannon-Gibbs entropy, the Tsallis entropy, and the Burg entropy, all satisfy condition (\ref{Decomposable}). Following is an intermediate lemma which we use to prove convergence in Bayesian potential games. 
	
	\begin{lemma}\label{LemPositiveCorrelation}
		Let $\mathcal{G}:\Sigma \times \Omega \to \mathbb{R}^n$ be a strongly Lipschitz Bayesian potential game. Assume that the conditions of part (ii) of Lemma \ref{ThmSigmaNormCpt} are satisfied. Suppose that the regularizer satisfies (\ref{Decomposable}). Then the following statements hold:
		\begin{enumerate}[(i)]
			\item Suppose that $\theta'$ is strictly increasing on $(0,1)$. Then for every $\epsilon>0$, the RBBR dynamic satisfies Bayesian positive correlation with respect to the virtual payoff $\widetilde{\mathcal{G}}$ as defined in (\ref{virtualpayoff}). 
			\item Consider the norm compact set $\Sigma^{\alpha}_M(\Xi)$ obtained in Lemma \ref{ThmSigmaNormCpt}. Let $\delta=(\delta_1,\ldots,\delta_n) \in (0,\frac{1}{2})^n$ be such that $M \subseteq [\delta_1,1-\delta_1] \times \cdots \times [\delta_n,1-\delta_n]$. 
			Suppose that $\theta'$ is strictly increasing on the interval $[\min_{i=1,\ldots,n}\delta_i, 1- \min_{i=1,\ldots,n}\delta_i]$. Then the RBBR dynamic satisfies positive correlation with respect to the virtual payoff $\widetilde{\mathcal{G}}$ defined in (\ref{virtualpayoff}).
		\end{enumerate}
	\end{lemma}
	
	\begin{proof}
		See Appendix \ref{AppLemPositiveCorrelation}.
	\end{proof}
	
	As we shall see in Theorem \ref{bpgc} below that $\widetilde{\varphi}$ restricted to $\Sigma^{\alpha}_M(\Xi)$ will act as a Lyapunov function for the RBBR dynamic when the underlying game is a Bayesian potential game. Also, the fact that $\Sigma^{\alpha}_{M}(\Xi)$ is relatively norm compact and forward invariant under the {RBBR} learning dynamic will allow us to use results from the theory of dynamical systems to prove the desired convergence of the solutions to the regularized Bayesian equilibrium.  In what follows, we present the main theorem of this section.  
	\begin{theorem}
		\label{bpgc}
		Suppose that the assumptions of Theorem \ref{ThmSigmaNormCpt}, part (ii) of Lemma \ref{ThmSigmaNormCpt} and Lemma \ref{LemPositiveCorrelation} are satisfied. Let $\mathcal{G}:{\Sigma} \times \Omega \rightarrow \mathbb{R}^{n}$ be a Bayesian potential game with Bayesian potential function $\varphi:\Sigma \rightarrow \mathbb{R}$ and let $\widetilde{\varphi}^{\alpha}_{M}:\Sigma^{\alpha}_M(\Xi) \rightarrow \mathbb{R}$ be the entropy adjusted Bayesian potential function restricted to $\Sigma^{\alpha}_M(\Xi)$. Then the following statements hold true:
		\begin{enumerate}[(i)]
			\item $\widetilde{\varphi}^{\alpha}_M$ increases weakly along every solution trajectory to {RBBR} learning dynamic that originates in $\Sigma^{\alpha}_M(\Xi)$ and  increases strictly across every non-stationary solution trajectory.
			\item The set of omega--limit points (in the strong topology) of any trajectory to the {RBBR} learning dynamic is a non-empty connected compact set of perturbed Bayesian equilibria. Moreover, such limit points are local maximizers of the entropy adjusted Bayesian potential function $\widetilde{\varphi}^{\alpha}_M$ on $\Sigma^{\alpha}_{M}(\Xi)$.
		\end{enumerate}
	\end{theorem}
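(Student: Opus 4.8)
The plan is to follow the classical Lyapunov approach for perturbed best response dynamics (as in \cite{hofbauer2002global}, \cite{lahkar2015logit}), transported to the Banach space setting. The key identity to establish first is that $\widetilde{\varphi}^{\alpha}_M$ is a strict Lyapunov function for the RBBR dynamic on $\Sigma^{\alpha}_M[\Xi]$. Since $\Sigma^{\alpha}_M[\Xi]$ is forward invariant under the hypothesis $\alpha \geq 1/(\epsilon\bm{\gamma})$ (part (ii) of Theorem~\ref{ThmSigmaNormCpt}), any solution originating there stays in $\Sigma_M$, so the regularizer $v$ and its gradient are evaluated only on the compact set $M \subseteq \Delta(S)^{\circ}$ where everything is smooth. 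First I would compute, along a solution $(\sigma_t)_{t\ge0}$, the derivative
\begin{equation*}
\frac{d}{dt}\widetilde{\varphi}^{\alpha}_M(\sigma_t) = \text{D}\widetilde{\varphi}[\sigma_t](\dot{\sigma}_t) = \int_{\Theta}\big\langle \mathcal{G}(\sigma_t,\theta) - \epsilon\nabla v(\sigma_t(\theta)),\; \bm{\beta}_{\epsilon}[\sigma_t](\theta) - \sigma_t(\theta)\big\rangle\,\bm{\xi}(d\theta),
\end{equation*}
using the chain rule for the Fr\'echet derivative, the fact that $\nabla^{\textbf F}\varphi[\sigma] = \mathcal{G}(\sigma,\cdot)$ by definition of a Bayesian potential game, and $\nabla^{\textbf F}\widetilde{v}[\sigma](\theta) = \epsilon\nabla v(\sigma(\theta))$ (note $\dot\sigma_t(\theta)\in\mathbb{R}^n_0$ pointwise since both $\bm{\beta}_\epsilon[\sigma_t](\theta)$ and $\sigma_t(\theta)$ lie in $\Delta(S)$, so the gradient identity~\eqref{GradientExist} applies).

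The core of step one is then a pointwise estimate: for each fixed $\theta$, write $h(\bm{y}) := \langle \bm{y},\mathcal{G}(\sigma_t,\theta)\rangle - \epsilon v(\bm{y})$, which is $\epsilon\bm{\gamma}$-strongly concave on $\Delta(S)$ with unique maximizer $\bm{y}^{*} = \bm{\beta}_{\epsilon}[\sigma_t](\theta)$. The first-order optimality condition for the concave maximization gives $\langle \nabla h(\bm{y}^{*}),\, \sigma_t(\theta) - \bm{y}^{*}\rangle \le 0$, and strong concavity upgrades this to $\langle \nabla h(\bm{y}^{*}),\, \bm{y}^{*} - \sigma_t(\theta)\rangle \ge \tfrac{\epsilon\bm{\gamma}}{2}\|\bm{y}^{*} - \sigma_t(\theta)\|^{2}$; since $\nabla h(\bm{y}^{*}) = \mathcal{G}(\sigma_t,\theta) - \epsilon\nabla v(\bm{y}^{*})$ and the integrand above involves $\mathcal{G}(\sigma_t,\theta) - \epsilon\nabla v(\sigma_t(\theta))$, I would reconcile the two by adding and subtracting $\epsilon\nabla v(\bm{y}^{*})$ and using convexity of $v$ to control the cross term $\epsilon\langle \nabla v(\bm{y}^{*}) - \nabla v(\sigma_t(\theta)),\, \bm{y}^{*} - \sigma_t(\theta)\rangle \ge 0$. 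Integrating over $\Theta$ yields $\frac{d}{dt}\widetilde{\varphi}^{\alpha}_M(\sigma_t) \ge \tfrac{\epsilon\bm{\gamma}}{2}\vertiii{\bm{\beta}_{\epsilon}[\sigma_t] - \sigma_t}_{2}$-type lower bound (an $L^2$-norm squared), which is zero exactly when $\dot\sigma_t \equiv 0$, i.e.\ at a regularized Bayesian equilibrium. This proves part (i): weak increase always, strict increase off rest points.

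For part (ii), I would invoke the theory of dynamical systems on metric spaces with a strict Lyapunov function. By Theorem~\ref{ThmSigmaNormCpt}(i), $\Sigma^{\alpha}_M[\Xi]$ is relatively norm-compact; combined with forward invariance and the continuity/uniqueness of solutions from Theorem~\ref{ThmSolnExistence}, the semiflow restricted to the closure of $\Sigma^{\alpha}_M[\Xi]$ is well-defined and every forward orbit is precompact, hence has non-empty compact connected omega-limit set $\omega(\sigma_0)$. The LaSalle invariance principle (valid in this setting because $\widetilde{\varphi}^{\alpha}_M$ is continuous on the compact invariant set and strictly increasing off equilibria) forces $\omega(\sigma_0)$ to consist of rest points, i.e.\ perturbed Bayesian equilibria, on which $\widetilde{\varphi}^{\alpha}_M$ is constant. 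That these limit points are local maximizers of $\widetilde{\varphi}^{\alpha}_M$ follows since $\widetilde{\varphi}^{\alpha}_M$ increases along every nearby trajectory and an equilibrium is a critical point of $\widetilde{\varphi}^{\alpha}_M$ (its Fr\'echet gradient is $\mathcal{G}(\sigma,\cdot) - \epsilon\nabla v(\sigma(\cdot))$, which vanishes in the relevant directions at a rest point by the optimality characterization).

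The main obstacle I anticipate is the rigorous justification of differentiating $t \mapsto \widetilde{\varphi}^{\alpha}_M(\sigma_t)$ and applying the chain rule in the Banach space $\widehat{\Sigma}$: one needs that $\sigma_t$ is strongly differentiable with $\dot\sigma_t = \bm{\beta}_{\epsilon}(\sigma_t) - \sigma_t$ (supplied by Theorem~\ref{ThmSolnExistence}), that $\varphi$ is Fr\'echet differentiable along the orbit, and that the Fr\'echet derivative composes correctly with the strong derivative of the curve — plus measurability and integrability of the pointwise integrand, which is where forward invariance into $\Sigma_M$ (keeping everything on the compact set $M$ away from $\partial\Delta(S)$, where $\|\nabla v\|$ blows up) is essential. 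A secondary technical point is confirming that the $L^2$-type lower bound on $\frac{d}{dt}\widetilde{\varphi}^{\alpha}_M$ genuinely separates rest points from non-rest points in the strong ($L^1$) topology; since the Bayesian strategies take values in the bounded set $\Delta(S)$, the $L^1$ and $L^2$ norms are comparable on $\Sigma$, so this causes no real difficulty, but it should be stated carefully.
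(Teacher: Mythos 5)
Your proposal follows essentially the same route as the paper: differentiate the entropy-adjusted potential along solutions, sign the resulting integral by combining the first-order optimality of the regularized best response with monotonicity of the regularizer's gradient (the paper packages this as Lemma \ref{LemPopolnGame=GateauxDeri} plus two convexity inequalities, while you do it pointwise in $\theta$), and then conclude via norm compactness, forward invariance, and standard omega-limit/LaSalle arguments on $\Sigma^{\alpha}_{M}[\Xi]$. Your pointwise strong-concavity estimate is a small bonus: it yields the explicit lower bound $\tfrac{\epsilon\bm{\gamma}}{2}\int_{\Theta}\|\bm{\beta}_{\epsilon}[\sigma_t](\theta)-\sigma_t(\theta)\|^{2}\bm{\xi}(d\theta)$ and hence the strict increase along non-stationary trajectories more directly than the paper's convexity-only argument, which only establishes the weak inequality explicitly.
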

	\begin{proof}
		See Appendix \ref{AppSectionPotential}.
	\end{proof}

	\section{{Convergence in Bayesian negative semidefinite games}}\label{SectionBayesianNegative}
	In this section we seek to define the notion of Bayesian negative semidefinite games, which is a generalization of negative semidefinite population games on finite strategy spaces to heterogeneous populations. We also generalize the notion of self defeating externalities to Bayesian self defeating externalities. Following \cite{cheung2014pairwise}, we show the equivalence between Bayesian negative semidefinite games and Bayesian self defeating externalities. Similar to \cite{hofbauer2007evolution} in finite strategy negative semidefinite games and \cite{lahkar2015logit} in continuum strategy negative semidefinite games under homogeneous set-up, we show uniqueness of regularized equilibrium. Finally, we conclude the section by proving a convergence result which shows that the solution to the {RBBR} learning dynamic with initial condition in norm compact forward invariant subsets of $\Sigma$ converges to the unique regularized Bayesian equilibrium under the strong topology on $\Sigma$, when the underlying game is a Bayesian negative semidefinite game. We begin with the following definition.
	
	\begin{definition}\label{DefBayNegSemGame}
		A mapping ${\mathcal{G}}:{\Sigma}\times \Omega \rightarrow \mathbb{R}^{n}$ is called a \textbf{Bayesian negative semidefinite} game if 
		\begin{equation}\label{DefBayNeg}
			\int_{\Omega}\langle\mathcal{G}(\sigma,\omega)-\mathcal{G}({\rho},\omega),\sigma(\omega)-{\rho}(\omega)\rangle{\xi}(d\omega) \leq 0 \hspace{3mm} \text{ for all } \sigma, {\rho} \in {\Sigma}. 
		\end{equation}
	\end{definition}

	\cite{hofbauer2009stable} calls such games stable games. The above definition is a generalization of stable games to heterogeneous populations. Below we provide some examples of games which are Bayesian negative semi-definite.
	
	\begin{example}[\textbf{Bayesian aggregative RPS game}]
		\textit{In this example, we define the notion of a Bayesian aggregative Rock-Paper-Scissor game. We will use the abbreviation BA-RPS for the same. In standard RPS games, the players are allowed three actions, namely, `rock (R)', `paper (P)' and `scissors (S)'. The P covers the R, the R smashes the S, and the S cuts to the P. Now consider an agent of type $\omega$. Suppose that the Bayesian strategy exercised by the underlying population is $\Sigma$. Then the payoff vector of the agent is given by $A_{\omega}\mathcal{E}(\sigma)$, where $A_{\omega}$ is the payoff matrix specific to agents of type $\omega$ and is defined as}
		\begin{equation}
			A_{\omega}= 
			\begin{pmatrix}
				0 & -a(\omega) & b(\omega)\\
				b(\omega) & 0 & -a(\omega)\\
				-a(\omega) & b(\omega) & 0
			\end{pmatrix}
		\end{equation} 
		with $a, b : \Omega \to [0,\infty)$ are measurable functions. Formally, a Bayesian aggregate RPS is a mapping $\mathcal{G}_{\text{BA-RPS}}:\Sigma \times \Omega \to \mathbb{R}^n$ such that for all $\sigma \in \Sigma$,
		\begin{equation}
			\mathcal{G}_{BA-RPS}(\sigma,\omega):=A_{\omega}\mathcal{E}(\sigma), \hspace{3mm} \text{ for all } \omega \in \Omega.
		\end{equation}
		
		\textit{A special case of the above game is called a standard BA-RPS game if $a(\omega)=b(\omega)$ for $\xi$-a.e $\omega \in \Omega$. Now suppose that $b(\omega)-a(\omega) \geq 0$ for $\xi$-a.e $\omega \in \Omega$, then from Definition \ref{DefBayNegSemGame} it follows that $\mathcal{G}_{\text{BA-RPS}}$ is a Bayesian aggregative RPS game.}\footnote{This follows computations similar to \cite{hofbauer2009stable}.} 
	\end{example}
	
	\begin{example}[\textbf{Bayesian symmetric zero-sum game}]
		\textit{For every $\omega \in \Omega$, let $A_{\omega}$ be a skew symmetric matrix, that is, $A^{ij}_{\omega}=-A^{ji}_{\omega}$ for all $i,j \in S$. We then define the Bayesian symmetric zero-sum game as a mapping $\mathcal{G}_{\text{sym}}:\Sigma \times \Omega \to \mathbb{R}$ such that for every $\sigma \in \Sigma$,}
		\begin{equation}
			\mathcal{G}_{\text{sym}}(\sigma,\omega):=A_{\omega}\mathcal{E}(\sigma), \hspace{3mm} \text{ for all } \omega \in \Omega.
		\end{equation}
		\textit{Thus, in such games, the inequality (\ref{DefBayNeg}) is satisfied and hence $\mathcal{G}_{\text{sym}}$ is a Bayesian negative semidefinite game.}
	\end{example}

	We now turn to investigate the equivalence between Bayesian negative semidefiniteness and Bayesian self-defeating externalities. Recall from Section \ref{SectionPerturbedBestResponse} that for a fixed $\omega \in \Omega$, the slice game $\mathcal{G}_{\omega}:\Sigma \rightarrow \mathbb{R}^n$ is defined as $\mathcal{G}_{\omega}(\sigma):=\mathcal{G}(\sigma,\omega)$, for all $\sigma \in \Sigma$. We require this in the definition of Bayesian self defeating externalities.
	\begin{definition}
		{A Bayesian population game ${\mathcal{G}}:{\Sigma}\times \Omega \rightarrow \mathbb{R}^{n}$ satisfies \textbf{Bayesian self defeating externalities} if the following two conditions are satisfied:}
		\begin{enumerate}[(i)]
			\item For every $\omega \in \Omega$, the slice game $\mathcal{G}_{\omega}:\widehat{\Sigma} \rightarrow \mathbb{R}^n$ is $\mathcal{C}^{1}$-Fr\'echet differentiable for ${\xi}$-a.e $\omega \in \Omega$, and
			\item $\int_{\Omega}\langle\text{D}\mathcal{G}_{\omega}[{\sigma}](\sigma_0),{\sigma}_0(\omega)\rangle{\xi}(d\omega) \leq 0$, for all $\sigma \in \Sigma$, and all ${\sigma}_0 \in {\Sigma}_0$. 
		\end{enumerate}
	\end{definition}
	{\cite{hofbauer2009stable} first proved that the notion of negative semidefiniteness and self-defeating externalities are equivalent in the case of finite strategy homogeneous population games, followed by \cite{cheung2014pairwise} who proved the equivalence in the context of continuum strategy homogeneous population games. Our next lemma states that indeed, the same holds true in the case of finite strategy Bayesian population games also. }
	
	\begin{proposition}
		\label{sde}
		A Bayesian population game $\mathcal{G}:\Sigma \times \Omega \rightarrow \mathbb{R}^{n}$ is Bayesian negative semidefinite if and only if it satisfies Bayesian self defeating externalities.
	\end{proposition}
	\begin{proof}
		See Appendix \ref{AppSectionBayesianNegative}.
	\end{proof}	
	Following is the result on the uniqueness of regularized Bayesian equilibrium for Bayesian negative semidefinite games. This result extends the uniqueness result of \cite{hofbauer2009stable} in the heterogeneous set-up for finitely many strategies.

	\begin{proposition}\label{LemBayNegSemGameUnique}
		Fix a noise parameter $\epsilon>0$. Suppose that $\mathcal{G}:\Sigma \times \Omega \to \mathbb{R}^n$ is a Bayesian negative semidefinite game which is $\mathcal{C}^1$ in the sense of Fr\'echet differentiability. Suppose that the regularizer $\textbf{v}$ satisfies $\langle d\nabla \textbf{v}_{\epsilon}[\textbf{x}+t\textbf{z}_0](\textbf{z}_0),\textbf{z}_0 \rangle \geq 0$, for all $t \in \mathbb{R}$, $\textbf{x} \in \Delta$ and $\textbf{z}_0 \in \mathbb{R}^n_0$ such that $\textbf{x}+t\textbf{z}_0 \in \Delta$.
		Then the game $\mathcal{G}$ has a unique regularized Bayesian equilibrium.
	\end{proposition}
	\begin{proof}
		See Appendix \ref{AppLemBayNegSemGameUnique}.
	\end{proof}
	
	Now, we proceed to study the convergence of Bayesian negative semidefinite games under the {RBBR} dynamic. To do so we again restrict the dynamic to the norm compact forward invariant subset $\Sigma^{\alpha}_{M}(\Xi)$ as defined in Section \ref{SectionBayesianPotential}. On this restricted space we define the appropriate Lyapunov function as a mapping $\psi^{\alpha}_{M,\epsilon}:\Sigma^{\alpha}_{M}(\Xi) \rightarrow \mathbb{R}$ such that for all $\sigma \in \Sigma^{\alpha}_{M}(\Xi)$,
	\begin{align}\label{BayNegDefLyapounov}
		\psi^{\alpha}_{M,\epsilon}(\sigma)&:=\Big[\int_{\Omega}\langle\mathcal{G}(\sigma,\omega),{{\beta}}_{\epsilon}[\sigma](\omega)\rangle{\xi}(d\omega)-\widetilde{\textbf{v}}_{\epsilon}({{\beta}}_{\epsilon}(\sigma))\Big] \nonumber \\
		&\hspace{3cm}-\Big[\int_{\Omega}\langle\mathcal{G}(\sigma,\omega),\sigma(\omega)\rangle{\xi}(d\omega)-\widetilde{\textbf{v}}_{\epsilon}(\sigma)\Big].
	\end{align}

	\begin{theorem}
		\label{bndc}
		Let ${\mathcal{G}}:{\Sigma}\times\Omega \rightarrow \mathbb{R}^{n}$ be a Bayesian negative semidefinite game such that the slice map $\mathcal{G}_{\omega}$ is $\mathcal{C}^{1}$-Fr\'echet differentiable in the norm topology for ${\xi}$-a.e $\omega \in \Omega$. Then, the Lyapunov function $\psi^{\alpha}_{M,\epsilon}:\Sigma^{\alpha}_{M}(\Xi) \rightarrow \mathbb{R}$ as defined in (\ref{BayNegDefLyapounov}) decreases monotonically to zero along every solution trajectory to the {RBBR} learning dynamic that originates in $\Sigma^{\alpha}_{M}(\Xi)$, and decreases strictly along every non stationary solutions. {Hence, every solution trajectory to the {RBBR} learning dynamic in $\Sigma^{\alpha}_{M}(\Xi)$ converges with respect to the norm topology on $\Sigma^{\alpha}_{M}(\Xi)$ to the regularized Bayesian equilibrium.} 
	\end{theorem}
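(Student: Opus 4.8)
The plan is to treat $\psi_{\epsilon}$ (restricted to the set $\Sigma^{\alpha}_{M}[\Xi]$ of Theorem~\ref{ThmSigmaNormCpt}) as a strict Lyapunov function whose zero set is exactly the set of rest points of the {RBBR} learning dynamic, and then to run a LaSalle-type argument on this set. This is legitimate because, by Theorem~\ref{ThmSigmaNormCpt}, for $\alpha\geq 1/(\epsilon\bm{\gamma})$ the set $\Sigma^{\alpha}_{M}[\Xi]$ is norm compact and forward invariant; a $C^{1}$-Fr\'echet slice map is strongly locally Lipschitz on this compact set, so by Theorem~\ref{ThmSolnExistence} the trajectory $(\sigma_{t})_{t\geq 0}$ exists uniquely and is strongly differentiable with $\dot\sigma_{t}=\bm{\beta}_{\epsilon}(\sigma_{t})-\sigma_{t}$, and by the forward invariance it stays in $\Sigma^{\alpha}_{M}[\Xi]$ when started there.

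Write $g_{\sigma,\theta}(\bm{y}):=\langle\mathcal{G}(\sigma,\theta),\bm{y}\rangle-\epsilon v(\bm{y})$, which is $\epsilon\bm{\gamma}$-strongly concave on $\Delta(S)^{\circ}$ with unique maximiser $\bm{\beta}_{\epsilon}[\sigma](\theta)$, interior by the steepness of $v$. Then $\psi_{\epsilon}(\sigma)=\int_{\Theta}\big(g_{\sigma,\theta}(\bm{\beta}_{\epsilon}[\sigma](\theta))-g_{\sigma,\theta}(\sigma(\theta))\big)\bm{\xi}(d\theta)\geq 0$, with equality iff $\sigma(\theta)=\bm{\beta}_{\epsilon}[\sigma](\theta)$ $\bm{\xi}$-a.s., i.e.\ iff $\sigma$ is a regularized Bayesian equilibrium. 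For the derivative along a solution I would split $\psi_{\epsilon}=A-B$ with $A(\sigma):=\int_{\Theta}\langle\mathcal{G}(\sigma,\theta),\bm{\beta}_{\epsilon}[\sigma](\theta)\rangle\bm{\xi}(d\theta)-\widehat{v}_{\epsilon}(\bm{\beta}_{\epsilon}(\sigma))$ and $B(\sigma):=\int_{\Theta}\langle\mathcal{G}(\sigma,\theta),\sigma(\theta)\rangle\bm{\xi}(d\theta)-\widehat{v}_{\epsilon}(\sigma)$. By the envelope (Danskin) theorem applied pointwise --- $\bm{\beta}_{\epsilon}[\sigma_{t}](\theta)$ being the interior maximiser of $g_{\sigma_{t},\theta}$ annihilates the first-order term through the maximiser --- and differentiating under the integral (justified by uniform bounds on $\mathcal{G}$, $\text{D}\mathcal{G}_{\theta}$, $\bm{\beta}_{\epsilon}[\cdot]$ and on $\nabla v$ over the compact set $\Sigma^{\alpha}_{M}[\Xi]$, where $M\subseteq\Delta(S)^{\circ}$ keeps us off the boundary), one gets $\tfrac{d}{dt}A(\sigma_{t})=\int_{\Theta}\langle\text{D}\mathcal{G}_{\theta}[\sigma_{t}](\dot\sigma_{t}),\bm{\beta}_{\epsilon}[\sigma_{t}](\theta)\rangle\bm{\xi}(d\theta)$, while a direct computation gives $\tfrac{d}{dt}B(\sigma_{t})=\int_{\Theta}\langle\text{D}\mathcal{G}_{\theta}[\sigma_{t}](\dot\sigma_{t}),\sigma_{t}(\theta)\rangle\bm{\xi}(d\theta)+\int_{\Theta}\langle\mathcal{G}(\sigma_{t},\theta)-\epsilon\nabla v(\sigma_{t}(\theta)),\dot\sigma_{t}(\theta)\rangle\bm{\xi}(d\theta)$. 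Since $\dot\sigma_{t}(\theta)=\bm{\beta}_{\epsilon}[\sigma_{t}](\theta)-\sigma_{t}(\theta)$, subtracting yields
\[
\tfrac{d}{dt}\,\psi_{\epsilon}(\sigma_{t})=\int_{\Theta}\langle\text{D}\mathcal{G}_{\theta}[\sigma_{t}](\dot\sigma_{t}),\dot\sigma_{t}(\theta)\rangle\,\bm{\xi}(d\theta)-\int_{\Theta}\big\langle\mathcal{G}(\sigma_{t},\theta)-\epsilon\nabla v(\sigma_{t}(\theta)),\dot\sigma_{t}(\theta)\big\rangle\,\bm{\xi}(d\theta).
\]

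The first integral is $\leq 0$: $\dot\sigma_{t}(\theta)\in\mathbb{R}^{n}_{0}$ pointwise so $\dot\sigma_{t}\in\Sigma_{0}$, and by Lemma~\ref{sde} the game satisfies Bayesian self defeating externalities, whose condition~(ii) with $\sigma_{0}=\dot\sigma_{t}$ gives exactly this. For the second integral, since $g_{\sigma_{t},\theta}$ is concave with gradient $\mathcal{G}(\sigma_{t},\theta)-\epsilon\nabla v(\sigma_{t}(\theta))$ at $\sigma_{t}(\theta)$ and maximiser $\bm{\beta}_{\epsilon}[\sigma_{t}](\theta)$, the first-order inequality for concave functions followed by the quadratic growth of the $\epsilon\bm{\gamma}$-strongly concave $g_{\sigma_{t},\theta}$ around its maximiser gives $\langle\mathcal{G}(\sigma_{t},\theta)-\epsilon\nabla v(\sigma_{t}(\theta)),\dot\sigma_{t}(\theta)\rangle\geq g_{\sigma_{t},\theta}(\bm{\beta}_{\epsilon}[\sigma_{t}](\theta))-g_{\sigma_{t},\theta}(\sigma_{t}(\theta))\geq\tfrac{\epsilon\bm{\gamma}}{2}\|\dot\sigma_{t}(\theta)\|^{2}$. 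Hence $\tfrac{d}{dt}\psi_{\epsilon}(\sigma_{t})\leq-\tfrac{\epsilon\bm{\gamma}}{2}\int_{\Theta}\|\dot\sigma_{t}(\theta)\|^{2}\bm{\xi}(d\theta)\leq 0$, with strict inequality whenever $\dot\sigma_{t}\not\equiv 0$; this is the asserted monotone (resp.\ strict) decrease. Now, $\Sigma^{\alpha}_{M}[\Xi]$ being norm compact and forward invariant, the forward orbit of $\sigma_{0}$ is precompact, so $\omega(\sigma_{0})$ is non-empty, compact and invariant, and the continuous, non-increasing $\psi_{\epsilon}$ is constant on it; by invariance together with the strict-decrease property, every point of $\omega(\sigma_{0})$ is a rest point, so $\psi_{\epsilon}\equiv 0$ there and $\psi_{\epsilon}(\sigma_{t})\downarrow 0$.

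It remains to upgrade convergence to the set of rest points to convergence to a single point. For this I would show the regularized Bayesian equilibrium is unique: if $\sigma^{\circ},\rho^{\circ}$ are rest points (necessarily interior), the pointwise first-order conditions give $\langle\mathcal{G}(\sigma^{\circ},\theta)-\epsilon\nabla v(\sigma^{\circ}(\theta)),\sigma^{\circ}(\theta)-\rho^{\circ}(\theta)\rangle=0$ and symmetrically; adding and integrating, $\int_{\Theta}\langle\mathcal{G}(\sigma^{\circ},\theta)-\mathcal{G}(\rho^{\circ},\theta),\sigma^{\circ}(\theta)-\rho^{\circ}(\theta)\rangle\bm{\xi}(d\theta)=\epsilon\int_{\Theta}\langle\nabla v(\sigma^{\circ}(\theta))-\nabla v(\rho^{\circ}(\theta)),\sigma^{\circ}(\theta)-\rho^{\circ}(\theta)\rangle\bm{\xi}(d\theta)$; the left side is $\leq 0$ by Bayesian negative semidefiniteness and the right side is $\geq\epsilon\bm{\gamma}\int_{\Theta}\|\sigma^{\circ}(\theta)-\rho^{\circ}(\theta)\|^{2}\bm{\xi}(d\theta)$ by $\bm{\gamma}$-strong convexity of $v$, forcing $\sigma^{\circ}=\rho^{\circ}$ in $\Sigma/{\sim}$. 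Thus $\omega(\sigma_{0})=\{\sigma^{\circ}\}$ and $\vertiii{\sigma_{t}-\sigma^{\circ}}\to 0$, which is the claimed norm convergence. The main obstacle is the infinite-dimensional differentiation of $\psi_{\epsilon}$ along the trajectory --- rigorously establishing the envelope identity for the value function $A:\widehat{\Sigma}\to\mathbb{R}$ and the interchange of $\tfrac{d}{dt}$ with $\int_{\Theta}$ --- which is precisely what forces us onto the norm-compact set $\Sigma^{\alpha}_{M}[\Xi]$, where the uniform bounds and interiority ($M\subseteq\Delta(S)^{\circ}$) needed to dominate the integrands are available; the LaSalle step and the uniqueness computation are then routine.
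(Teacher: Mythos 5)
Your proposal is correct and follows the same overall strategy as the paper: the same Lyapunov function $\psi_{\epsilon}$, the same differentiation along trajectories (the paper splits $\psi_{\epsilon}$ into four pieces $\psi^{1}_{\epsilon},\ldots,\psi^{4}_{\epsilon}$ where you group them as $A-B$, and it cancels the $\dot{\bm{\beta}}_{\epsilon}$ terms through Lemma \ref{LemPopolnGame=GateauxDeri}, which is exactly your Danskin/envelope step), the same use of Lemma \ref{sde} to make $\int_{\Theta}\langle\text{D}\mathcal{G}_{\theta}[\sigma](\dot\sigma),\dot\sigma(\theta)\rangle\bm{\xi}(d\theta)\leq 0$, and the same LaSalle argument on the norm-compact, forward-invariant set $\Sigma^{\alpha}_{M}[\Xi]$ furnished by Theorem \ref{ThmSigmaNormCpt}. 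You do, however, strengthen two steps in ways the paper does not. First, where the paper only argues that the second term $\int_{\Theta}\langle\mathcal{G}_{\theta}(\sigma)-\nabla^{\textbf{G}}\widetilde{v}_{\epsilon}[\sigma](\theta),\dot\sigma(\theta)\rangle\bm{\xi}(d\theta)$ is non-negative, you use strong concavity of $\bm{y}\mapsto\langle\mathcal{G}(\sigma,\theta),\bm{y}\rangle-\epsilon v(\bm{y})$ to obtain the quantitative dissipation bound $\dot\psi_{\epsilon}(\sigma_{t})\leq-\tfrac{\epsilon\bm{\gamma}}{2}\int_{\Theta}\|\dot\sigma_{t}(\theta)\|^{2}\bm{\xi}(d\theta)$. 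Second, and more substantively, you add a uniqueness argument for the regularized Bayesian equilibrium (combining negative semidefiniteness of $\mathcal{G}$ with $\bm{\gamma}$-strong monotonicity of $\nabla v$ at the interior first-order conditions), which upgrades the paper's conclusion --- whose LaSalle step only yields that the omega-limit set is contained in the set of rest points --- to convergence of the trajectory to a single equilibrium, thereby actually justifying the definite article in ``converges \ldots to the regularized Bayesian equilibrium'' in the statement. The only caveats are minor and at the same level of informality as the paper itself: the interchange of $\tfrac{d}{dt}$ with $\int_{\Theta}$ and the differentiability of $t\mapsto\bm{\beta}_{\epsilon}(\sigma_{t})$ are asserted rather than proved (you flag this honestly), and your passage from $C^{1}$-Fr\'echet slice maps to the Lipschitz hypothesis of Theorem \ref{ThmSolnExistence} is stated loosely, but the paper makes the same implicit assumption in its own proof.
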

	
	\begin{proof}
		See Appendix \ref{AppBayNegDefConv}.
	\end{proof}	

	\begin{appendix}
		\section{Appendix}
		
		\subsection{{Preliminaries of B\"ochner Spaces}}\label{SubSecWeakTopRelWeakCpt}
		In this section, we provide a brief exposition on the preliminary theory of B\"ochner spaces which we use in our proofs.
		\begin{definition}
			Let $({\Omega},{\mathcal{A}},{\mu})$ be a probability space and let $\mathbb{X}$ be a Banach space.
			A function $f:\Omega \to \mathbb{X}$ is simple if there exists $x_1,x_2,\ldots,x_k \in \mathbb{X}$ and $E_1,E_2,\ldots,E_k \in \mathcal{A}$ such that $f(\omega)=\sum_{1 \leq i \leq k}x_i\chi_{E_i}(\omega)$ for all $\omega \in \Omega$, where $\chi_{E_i}$ denotes the indicator function on $E_i$ for all $i=1,2,\ldots,k$.
		\end{definition}
		\begin{definition}\label{Def_mu_mble}
			Let $({\Omega},{\mathcal{A}},{\mu})$ be a probability space and let $\mathbb{X}$ be a Banach space.
			A function $f:\Omega \to \mathbb{X}$ is $\mu$-measurable if there exists a sequence of simple functions $(f_n)_{n \geq 1}$ with $\lim_{n \to \infty}\|f_n-f\|_{\mathbb{X}}=0$, $\mu$-almost everywhere.
		\end{definition}
		\begin{definition}\label{DefWeaklyMuMeasurable}
			Let $({\Omega},{\mathcal{A}},{\mu})$ be a probability space and let $\mathbb{X}$ be a Banach space.	
			A function $f:\Omega \to \mathbb{X}$ is weakly $\mu$-measurable if for every $x^{*} \in \mathbb{X}^{*}$, the numerical function $x^{*}f$ is $\mu$-measurable.
		\end{definition}	
		
		Let $\mathcal{L}^{p}(\Omega,{\mu},\mathbb{X})$ denote the space of all (${\mu}$-a.s.) equivalence classes of B\"ochner integrable functions $f:{\Omega} \rightarrow \mathbb{X}$ with norm defined as $$\|f\|:=\int_{{\Omega}}\|f(\omega)\|^{p}_{\mathbb{X}}{\mu}(d{\omega})<\infty.$$ 
		The the B\"ochner space  $\mathcal{L}^{p}(\Omega,{\mu},\mathbb{X})$ is a generalization of standard $\mathcal{L}^{p}({\mu})$ spaces.\footnote{See \cite{diestel1978vector} and \cite{hille1996functional} for further details on B\"ochner spaces.} Note that the space of integrable signed Bayesian strategies as defined in (\ref{IntegrableSignedBayStr}) can be viewed as the B\"ochner space $\mathcal{L}^{1}(\Omega,{\mu},\mathbb{X})$, where ${\Omega}=\Omega$, ${\mathcal{A}}=\mathcal{B}_{\Omega}$, ${\mu}={\xi}$, and $\mathbb{X}=\mathbb{R}^{n}$.
		
		We now introduce the notion of weak topology on $\mathcal{L}^{p}(\Omega,{\mu},\mathbb{X})$ for $1 \leq p <\infty$. Let $f \in \mathcal{L}^{p}(\Omega,{\mu},\mathbb{X})$ and $g \in \mathcal{L}^{q}(\Omega,{\mu},\mathbb{X}^{*})$, where $p^{-1}+q^{-1}=1$.\footnote{Here $\mathbb{X}^{*}$ denotes the dual space of $\mathbb{X}$.} Define ${\langle} f,g{\rangle}({\omega}):=g({\omega})(f({\omega}))$ for all ${\omega} \in \Omega$.\footnote{Suppose that the Banach space $\mathbb{X}$ is reflexive. Then for $1 \leq p <\infty$, the dual of $\mathcal{L}^{p}(\widetilde{\mu},\mathbb{X})$ is identified by $\mathcal{L}^{q}(\widetilde{\mu},\mathbb{X}^{*})$, where $p^{-1}+q^{-1}=1$. That is, $\mathcal{L}^{p}(\widetilde{\mu},\mathbb{X})^{*} \cong \mathcal{L}^{q}(\widetilde{\mu},\mathbb{X}^{*})$, for $1 \leq p< \infty$. See \cite{diestel1978vector} for a complete exposition on weak topology on B\"ochner spaces.} 
		\begin{definition}\label{DefWeakTopOnSigma}
			Let $({\Omega},{\mathcal{A}},{\mu})$ be a probability space and let $\mathbb{X}$ be a Banach space. The weak topology on $ \mathcal{L}^{p}(\Omega,{\mu},\mathbb{X})$ is the topology induced by the convergence:
			$$f_{n} \rightarrow^{w} f \iff \int_{{\Omega}}\langle f_{n},g\rangle({\omega}){\mu}(d{\omega}) \rightarrow \int_{{\Omega}}\langle f,g\rangle({\omega}){\mu}(d{\omega}), \hspace{2mm} \text{ for all } g \in \mathcal{L}^{q}(\Omega,{\mu},\mathbb{X}^{*}).$$
		\end{definition}
		The following three definitions are required in the context of Theorem \ref{ThmSigmaNormCpt}.
		\begin{definition}\label{DefAvg}
			Let $f \in \mathcal{L}^{1}(\Omega,{\mu},\mathbb{X})$ and $A \in \mathcal{A}$. Then the average value of $f$ over $A$ is defined as $$\text{avg}(f;A):=\dfrac{1}{{\mu}(A)}\int_A f(\omega){\mu}(d\omega).$$
		\end{definition}
		
		\begin{definition}\label{DefBocce}
			Let $f \in \mathcal{L}^{1}(\Omega,{\mu},\mathbb{X})$ and $A \in \mathcal{A}$. Then the Bocce oscillation of $f$ over $A$ is defined as $$\text{Bocce-osc}(f;A):=\dfrac{1}{{\mu}(A)}\int_{A}\|f(\omega)-\text{avg}(f;A)\|_{\mathbb{X}}{\mu}(d\omega).$$
		\end{definition}
		
		\begin{definition}\label{DefSmallBocce}
			A subset $\mathbb{K}$ of $ \mathcal{L}^{1}(\Omega,{\mu},\mathbb{X})$ is said to satisfy small Bocce oscillation if for every $\epsilon>0$, there exists a finite measurable partition $\{A_1,\ldots,A_p\}\subseteq\Omega$ such that for all $f \in \mathbb{K}$, we have $$\sum_{1 \leq i \leq p}{\mu}(A_i)\text{Bocce-osc}(f;A_i)<\epsilon.$$ 
		\end{definition}

		\subsection{Proof of proposition \ref{Prop-OriGame-SliceGame}}\label{App-Prop-OriGame-SliceGame}
		
		We first prove the `only-if' part. 
		Note that the definition of Bayesian best response (\ref{EqBayesianBestResponse}) can be equivalently written as
		\begin{equation*}
			{\beta}(\sigma):=\Big\{\rho \in \Sigma: \rho(\omega) \in \arg\max_{\textbf{y} \in \Delta} \langle \textbf{y}, \mathcal{G}(\sigma,\omega)\rangle \text{ for } {\xi}-\text{a.e } \omega \in \Omega \Big\}.
		\end{equation*}   
		Now suppose that $\sigma^{\circ}$ is a Bayesian equilibrium of $\mathcal{G}$. Then we must have $\sigma^{\circ} \in {\beta}(\sigma^{\circ})$ which implies by the definition of Bayesian equilibrium that $$\sigma^{\circ}(\omega) \in \arg\max_{\textbf{y} \in \Delta} \langle \textbf{y}, \mathcal{G}(\sigma^{\circ},\omega)\rangle, \hspace{3mm} \text{ for } {\xi}-\text{a.e } \omega \in \Omega.$$ As a result, by the original definition of Bayesian best response correspondence, we have $\sigma^{\circ}(\omega) \in {\beta}[\sigma^{\circ}](\omega)$ for ${\xi}$-a.e $\omega \in \Omega$. Finally, by the definition of $\omega$-sliced Bayesian best response (\ref{EqSlicedBayesianBestResponse}), we have that $\sigma^{\circ}(\omega) \in \beta_{\omega}(\sigma^{\circ})$ for $\xi$-a.e $\omega \in \Omega$. This completes the proof of the only-if part.
		
		We now proceed with the proof of `if-part'. So let us suppose that $\sigma^{\circ}$ is an $\omega$-sliced Bayesian equilibrium of the game $\mathcal{G}_{\omega}$ for $\xi$-a.e $\omega \in \Omega$. Similarly, back tracing as in the proof of only if part, we have $\sigma^{\circ}$ is a Bayesian equilibrium of $\mathcal{G}$.

		\subsection{Proof of Proposition \ref{Prop}}\label{AppProp}
		To prove the proposition, we resort to the following theorem.
		\begin{theorem}[\textbf{Pettis's Measurability Theorem} (\cite{diestel1978vector})]\label{ThmPettis}
			Let $(\widehat{\Omega},\mathcal{A},\mu)$ be a probability space and let $\mathbb{X}$ be a Banach space. A function $f:\widehat{\Omega} \to \mathbb{X}$ is $\mu$-measurable if and only if 
			\begin{enumerate}[(i)]
				\item $f$ is $\mu$-essentially separable valued, that is, there exists $E \in \mathcal{A}$ with $\mu(E)=0$ and such that $f(\widehat{\Omega}\setminus E)$ is (norm) separable subset of $\mathbb{X}$, and
				\item $f$ is weakly $\mu$-measurable.
			\end{enumerate}
		\end{theorem}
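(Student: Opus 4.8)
The plan is to prove the two implications separately, the forward (necessity) direction being routine and the converse (sufficiency) carrying essentially all the content. Throughout I take $\mu$ to be a probability measure, hence finite, which will matter only in the final truncation step.

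For necessity, suppose $f$ is $\mu$-measurable, so there are simple functions $f_m$ with $f_m \to f$ in norm $\mu$-a.e., say off a null set $E$. For weak measurability, fix $x^* \in \mathcal{X}^*$; since $|x^* f_m(\omega) - x^* f(\omega)| \le \|x^*\|\,\|f_m(\omega) - f(\omega)\|_{\mathcal{X}}$, the scalar simple (hence measurable) functions $x^* f_m$ converge to $x^* f$ off $E$, so $x^* f$ is measurable. For essential separable-valuedness, each $f_m$ takes finitely many values, so $D := \bigcup_m f_m(\Omega)$ is countable; for $\omega \notin E$ we have $f(\omega) = \lim_m f_m(\omega) \in \overline{D}$, whence $f(\Omega \setminus E) \subseteq \overline{D}$, a separable set.

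For sufficiency, assume $f$ is weakly measurable and, after discarding the null set of condition (i) and redefining $f$ to be $0$ there (which alters neither hypothesis nor conclusion), that $f(\Omega)$ lies in a separable closed subspace $Y \subseteq \mathcal{X}$. The key lemma, and the crux of the whole argument, is the existence of a countable \emph{norming family}: choosing $\{y_k\}$ dense in $Y$ and, by Hahn--Banach, functionals $x_k^* \in \mathcal{X}^*$ with $\|x_k^*\| = 1$ and $x_k^*(y_k) = \|y_k\|$, a density estimate yields $\|y\| = \sup_k |x_k^*(y)|$ for every $y \in Y$. This is precisely what converts the coordinatewise (weak) measurability hypothesis into genuine measurability of norms: for any fixed $x \in \mathcal{X}$, the differences $f(\omega) - x$ lie in the separable closed subspace spanned by $Y$ and $x$, and applying a norming family $\{x_m^*\}$ there gives
$$\|f(\omega) - x\|_{\mathcal{X}} = \sup_m |x_m^*(f(\omega)) - x_m^*(x)|,$$
a countable supremum of measurable functions, hence measurable.

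With norm-measurability in hand I would build the approximants. Fix a dense sequence $\{x_k\}$ in $\overline{f(\Omega)}$; for each $n$ let $k_n(\omega)$ be the least index $k$ with $\|f(\omega) - x_k\|_{\mathcal{X}} < 1/n$, and set $f_n(\omega) := x_{k_n(\omega)}$. Each level set $\{k_n = k\}$ is cut out by the measurable conditions $\|f(\cdot) - x_j\|_{\mathcal{X}} \ge 1/n$ for $j < k$ together with $\|f(\cdot) - x_k\|_{\mathcal{X}} < 1/n$, so $f_n$ is a countably-valued measurable function with $\|f_n - f\|_{\mathcal{X}} < 1/n$ everywhere, i.e. $f_n \to f$ uniformly. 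The last step upgrades these countably-valued functions to genuine simple (finitely-valued) ones: writing $f_n = \sum_k x_k \chi_{E_{n,k}}$ with the $E_{n,k}$ partitioning $\Omega$, finiteness of $\mu$ gives $\mu\big(\bigcup_{k > N} E_{n,k}\big) \to 0$ as $N \to \infty$, so I can choose $N_n$ with $\mu(G_n) < 2^{-n}$, where $s_n := \sum_{k \le N_n} x_k \chi_{E_{n,k}}$ is simple and $G_n := \{\omega : s_n(\omega) \ne f_n(\omega)\}$. By Borel--Cantelli $\mu(\limsup_n G_n) = 0$, and off this null set $s_n(\omega) = f_n(\omega)$ eventually, so $s_n \to f$ $\mu$-a.e., proving $f$ is $\mu$-measurable. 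I expect the norming-family lemma to be the main obstacle, as it is the sole place where the Banach-space structure (Hahn--Banach) and the separability hypothesis are used together to bridge weak and strong measurability.
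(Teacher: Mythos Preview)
Your proof is correct and is essentially the standard argument found in Diestel--Uhl, \emph{Vector Measures}. However, note that the paper does not itself prove this theorem: Pettis's Measurability Theorem is stated in the appendix purely as a cited result from \cite{diestel1978vector}, and is then invoked as a black box to establish Proposition~\ref{Prop}. So there is no ``paper's own proof'' to compare against; you have supplied a proof where the paper simply defers to the literature.
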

		We now proceed with the proof of the proposition. We apply Theorem \ref{ThmPettis} with $\widehat{\Omega}=\Omega$, $\mu={\xi}$, and $\mathbb{X}=\mathbb{R}^n$. To show that the mapping $\sigma \mapsto {\beta}_{\epsilon}(\sigma)$ is well-defined, we need to show that for every $\sigma \in \Sigma$, ${\beta}_{\epsilon}(\sigma) \in \Sigma$. In other words, we need to show that ${\beta}_{\epsilon}[\sigma]$ is ${\xi}$-measurable. We thereby verify the conditions of Theorem \ref{ThmPettis}. Note that by definition, ${\beta}[\sigma]:\Omega\to\Delta$. Since $\Delta$ is compact, it is separable. This verifies condition (i). To verify condition (ii), we need to show that $\beta_{\epsilon}(\sigma):\Omega \to \Delta$ is weakly $\xi$-measurable in the sense of Definition \ref{DefWeaklyMuMeasurable}. Thus, fix $\textbf{u} \in \mathbb{R}^n$. We need to show that the map $\textbf{u}\circ{\beta}_{\epsilon}[\sigma]:\Omega\to\mathbb{R}$ defined by $\textbf{u}\circ{\beta}_{\epsilon}[\sigma](\omega):=\langle \textbf{u},{\beta}_{\epsilon}[\sigma](\omega)\rangle$ for all $\omega \in \Omega$ is weakly ${\xi}$-measurable. Since $\textbf{v}$ is a strongly convex function, we it follows that ${\beta}_{\epsilon}[\sigma](\omega)=\nabla \textbf{v}^{*}_{\epsilon}(\mathcal{G}(\sigma,\omega))$, in which case we have $$\langle \textbf{u},{\beta}_{\epsilon}[\sigma](\omega)\rangle=\langle \textbf{u},\nabla \textbf{v}^{*}_{\epsilon}(\mathcal{G}(\sigma,\omega))\rangle,\hspace{3mm} \text{ for all } \omega \in \Omega.$$ Also since $\mathcal{G}$ is a Bayesian population game, it follows from the definition that the map $\omega \mapsto \mathcal{G}(\sigma,\omega)$ is measurable. Also, it follows from \cite{rockafellar1970convex} that $\nabla \textbf{v}^{*}_{\epsilon}$ is $1/{{\gamma}\epsilon}$-Lipschitz (hence continuous). Therefore the map $\omega \mapsto \nabla \textbf{v}^{*}_{\epsilon}(\mathcal{G}(\sigma,\omega))$ is measurable. Thus $\omega \mapsto \langle \textbf{u},\nabla \textbf{v}^{*}_{\epsilon}(\mathcal{G}(\sigma,\omega))\rangle$ is measurable. As a result, by simple function approximation theorem, it follows that the map $\omega \mapsto \textbf{u}\circ{\beta}_{\epsilon}[\sigma](\omega)$ is weakly ${\xi}$-measurable. Since $\textbf{u} \in \mathbb{R}^n$ is arbitrary, condition (ii) is verified. This concludes the proof of Proposition \ref{ThmPettis}.

		\subsection{{Proof of Theorem \ref{ThmFixedPt}}}\label{AppFixedPoint}
		The proof of Theorem \ref{ThmFixedPt} is an application of Brouwer-Schauder-Tychonoff fixed point theorem (see \cite{charalambos2013infinite}) for details). We provide the details for completeness.
		\begin{theorem}[\textbf{Brouwer-Schauder-Tychonoff FPT}]
			\label{bst} Let $\mathbb{V}$ be a non-empty compact convex subset of a locally convex Hausdorff space, and let $f:\mathbb{V} \to \mathbb{V}$ be a continuous function. Then the set of fixed points of $f$ is compact and nonempty.
		\end{theorem}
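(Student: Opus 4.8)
The plan is to reduce the statement to Brouwer's fixed point theorem in finite dimensions by a standard finite-dimensional approximation (the \emph{Schauder projection}), and then to extract a genuine fixed point by exploiting the compactness of $\mathcal{V}$. Throughout, let $E$ denote the ambient locally convex Hausdorff topological vector space containing $\mathcal{V}$, and let $\mathcal{U}$ be a base of convex, symmetric neighbourhoods of the origin in $E$; such a base exists precisely because $E$ is locally convex. Note also that $\mathcal{V}$, being a compact subset of a Hausdorff space, is compact Hausdorff and hence normal, so every finite open cover of $\mathcal{V}$ admits a subordinate continuous partition of unity.

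First I would establish existence of a fixed point. Fix $U \in \mathcal{U}$. Since $\mathcal{V}$ is compact, finitely many translates $x_1 + U, \ldots, x_m + U$, with each $x_i \in \mathcal{V}$, cover $\mathcal{V}$. Choose continuous functions $\lambda_1, \ldots, \lambda_m : \mathcal{V} \to [0,1]$ forming a partition of unity subordinate to this cover, so that $\sum_{1 \leq i \leq m} \lambda_i \equiv 1$ and $\lambda_i(x) = 0$ whenever $x \notin x_i + U$. Define the Schauder projection
\[
P_U(x) := \sum_{1 \leq i \leq m} \lambda_i(x)\, x_i, \qquad x \in \mathcal{V}.
\]
Because $\sum_i \lambda_i(x) = 1$, one has $P_U(x) - x = \sum_i \lambda_i(x)(x_i - x)$; every nonvanishing term satisfies $x_i - x \in U$ by symmetry of $U$, and convexity of $U$ then yields the crucial estimate $P_U(x) - x \in U$ for all $x \in \mathcal{V}$. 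Moreover $P_U$ maps $\mathcal{V}$ into the polytope $C_U := \mathrm{conv}\{x_1, \ldots, x_m\}$, a compact convex subset of the finite-dimensional subspace spanned by the $x_i$. Hence $P_U \circ f$ restricts to a continuous self-map of $C_U$, and Brouwer's fixed point theorem (the finite-dimensional base case, which I take as a known black box) furnishes a point $x_U \in C_U$ with $P_U(f(x_U)) = x_U$. The estimate above then gives
\[
x_U - f(x_U) = P_U(f(x_U)) - f(x_U) \in U.
\]

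Next I would pass to the limit. Regard $(x_U)_{U \in \mathcal{U}}$ as a net indexed by $\mathcal{U}$ directed by reverse inclusion. By compactness of $\mathcal{V}$ this net has a subnet converging to some $x^{\ast} \in \mathcal{V}$. Continuity of $f$ and of vector subtraction give $x_U - f(x_U) \to x^{\ast} - f(x^{\ast})$ along the subnet. On the other hand, for any neighbourhood $W$ of the origin, choosing $U_0 \in \mathcal{U}$ with $U_0 \subseteq W$ forces $x_U - f(x_U) \in U \subseteq U_0 \subseteq W$ for every index $U \subseteq U_0$, so the full net $x_U - f(x_U)$ converges to $0$, and hence so does the subnet. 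Since $E$ is Hausdorff, limits are unique, whence $x^{\ast} - f(x^{\ast}) = 0$, i.e. $f(x^{\ast}) = x^{\ast}$.

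Finally, compactness of the fixed-point set follows cheaply: the map $x \mapsto x - f(x)$ is continuous on $\mathcal{V}$, and $\{0\}$ is closed in the Hausdorff space $E$, so $\mathrm{Fix}(f) = \{x \in \mathcal{V} : x - f(x) = 0\}$ is a closed subset of the compact set $\mathcal{V}$, hence compact; nonemptiness was just established. The main obstacle is the finite-dimensional reduction itself, namely constructing the Schauder projection with the key property $P_U(x) - x \in U$ and verifying that the Brouwer approximate fixed points coalesce in the limit, since this is exactly the step where local convexity and compactness are jointly exploited; Brouwer's theorem is deep but is invoked only as the finite-dimensional input.
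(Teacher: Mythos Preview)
Your argument is correct: it is the classical Schauder projection reduction to Brouwer's theorem, carried out cleanly. The only point worth a second glance is that applying Brouwer to $C_U$ requires the finite-dimensional subspace to carry its Euclidean topology, which is guaranteed because every finite-dimensional subspace of a Hausdorff topological vector space has a unique vector topology; you implicitly use this, and it holds.

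However, there is nothing in the paper to compare your proof against. The paper states the Brouwer--Schauder--Tychonoff theorem as a black-box result, with a citation to \cite{charalambos2013infinite} for the proof, and then \emph{applies} it to establish Theorem~\ref{ThmFixedPt} (existence of a regularized Bayesian equilibrium). So your write-up supplies a proof that the paper deliberately omits; the paper's own contribution at this point is only to verify the hypotheses---namely, that $\Sigma$ is weakly compact and convex in $\widehat{\Sigma}$ (Lemma~\ref{LemSigmaCpt}, via Dunford's theorem) and that $\bm{\beta}_{\epsilon}$ is weakly continuous (Lemma~\ref{LemBRWeakCont})---and then invoke the theorem.
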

		In order to apply Theorem \ref{bst}, recall from (\ref{IntegrableSignedBayStr}) that the space of integrable signed Bayesian strategies $\widehat{\Sigma}$ is identified by the B\"ochner space $\mathcal{L}^{1}(\Omega,{\xi},\mathbb{R}^{n})$. Since $\widehat{\Sigma}$ is endowed with the weak topology, it follows from \cite{kesavan2009functional} that it is a Hausdorff topological space. It also follows from the definition that $\widehat{\Sigma}$ is locally convex. Therefore, under the weak topology, the space of integrable signed Bayesian strategies is a locally convex Hausdorff topological space. We now proceed to show that the space of Bayesian strategies $\Sigma$ is compact under the weak topology. The fact that $\Sigma$ is convex is obvious.
		\begin{lemma}\label{LemSigmaCpt}
			The space of Bayesian strategies $\Sigma$ is compact under the weak topology.
		\end{lemma}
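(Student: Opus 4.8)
The plan is to invoke the Dunford--Pettis theorem to obtain relative weak compactness of $\Sigma$ inside $\widehat{\Sigma}=\mathcal{L}^{1}(\Theta,\bm{\xi},\mathbb{R}^{n})$, and then to verify separately that $\Sigma$ is weakly closed; a weakly closed subset of a relatively weakly compact set is weakly compact. As a preliminary remark, every $\sigma\in\Sigma$ is $\bm{\xi}$-measurable (Proposition \ref{Prop}, or Pettis's measurability theorem together with separability of $\Delta(S)$) and takes values in $\Delta(S)$, which is a compact, hence bounded, subset of $\mathbb{R}^{n}$; thus there is a constant $C>0$ with $\|\sigma(\theta)\|\leq C$ for $\bm{\xi}$-a.e.\ $\theta$, so $\sigma\in\widehat{\Sigma}$ and $\vertiii{\sigma}\leq C$. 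In particular $\Sigma$ is norm bounded and, being dominated by the constant integrable function $\theta\mapsto C$, it is uniformly integrable.

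For relative weak compactness I would exploit that $\mathbb{R}^{n}$ is finite dimensional, so $\mathcal{L}^{1}(\Theta,\bm{\xi},\mathbb{R}^{n})$ may be identified with the $n$-fold product of the scalar space $\mathcal{L}^{1}(\Theta,\bm{\xi})$, under which weak compactness reduces to coordinatewise weak compactness. Each coordinate image of $\Sigma$ is a bounded, uniformly integrable subset of $\mathcal{L}^{1}(\Theta,\bm{\xi})$, hence relatively weakly compact by the classical Dunford--Pettis theorem (\cite{diestel1978vector}). Consequently $\Sigma$ is relatively weakly compact in $\widehat{\Sigma}$.

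For weak closedness, note that $\Sigma$ is convex, so by Mazur's theorem (\cite{kesavan2009functional}) its weak closure coincides with its norm closure; it therefore suffices to show that $\Sigma$ is closed under $\vertiii{\hspace{0.5mm}\cdot\hspace{0.5mm}}$. If $(\sigma_{k})\subseteq\Sigma$ with $\vertiii{\sigma_{k}-\widehat{\sigma}}\to 0$, then along a subsequence $\sigma_{k}(\theta)\to\widehat{\sigma}(\theta)$ for $\bm{\xi}$-a.e.\ $\theta$; since $\Delta(S)$ is closed in $\mathbb{R}^{n}$ and each $\sigma_{k}(\theta)\in\Delta(S)$ for a.e.\ $\theta$, we conclude $\widehat{\sigma}(\theta)\in\Delta(S)$ for $\bm{\xi}$-a.e.\ $\theta$. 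Modifying $\widehat{\sigma}$ on a $\bm{\xi}$-null set produces a genuine Bayesian strategy, so $\widehat{\sigma}\in\Sigma$ as an element of $\widehat{\Sigma}$. Hence $\Sigma$ is norm closed, and therefore weakly closed; combined with the previous step, $\Sigma$ is weakly compact.

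The only genuinely delicate point is the appeal to Dunford--Pettis in a Bochner space, since the general characterization of relatively weakly compact subsets of $\mathcal{L}^{1}(\Omega,\mu,\mathcal{X})$ carries extra hypotheses on $\mathcal{X}$ (e.g.\ that $\mathcal{X}$ contains no isomorphic copy of $c_{0}$); this is vacuous here because $\mathcal{X}=\mathbb{R}^{n}$ is finite dimensional and the statement collapses to the scalar Dunford--Pettis theorem applied coordinatewise. The remaining ingredients --- norm boundedness, uniform integrability by domination, Mazur's theorem, and the almost-everywhere-along-a-subsequence argument for closedness --- are routine.
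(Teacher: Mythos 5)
Your proof is correct, and its skeleton is the same as the paper's: relative weak compactness of $\Sigma$ in $\widehat{\Sigma}$ via a Dunford--Pettis-type criterion, combined with weak closedness. The execution differs in two places, both to your advantage. First, where the paper invokes the Bochner-space version of Dunford's theorem (with the Radon--Nikodym hypotheses and the third condition that the sets $\bigl\{\int_{E}\sigma(\theta)\,\bm{\xi}(d\theta):\sigma\in\Sigma\bigr\}$ be relatively weakly compact, verified via compactness of $\Delta(S)$), you exploit finite dimensionality of $\mathbb{R}^{n}$ to identify $\widehat{\Sigma}$ with the $n$-fold product of the scalar space $\mathcal{L}^{1}(\Theta,\bm{\xi})$ and apply the classical scalar Dunford--Pettis theorem coordinatewise; this is legitimate since the weak topology of a finite product is the product of the weak topologies, and it sidesteps the vector-measure machinery entirely (your remark that the extra hypotheses on $\mathcal{X}$ are vacuous here is exactly the right observation). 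Second, the paper disposes of the final step with the bare assertion that $\Sigma$ is weakly closed, whereas you actually prove it: convexity of $\Sigma$ plus Mazur's theorem reduces weak closedness to norm closedness, which you obtain from a.e.\ convergence along a subsequence and closedness of $\Delta(S)$ (with the appropriate modification on a null set, which also handles cleanly the identification of $\Sigma$ with a subset of equivalence classes in $\widehat{\Sigma}$). So your write-up supplies a justification the paper omits, and is in that respect the more complete argument.
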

		\begin{proof}
			The proof of Lemma \ref{LemSigmaCpt} is an application of Dunford's Theorem (see \cite{diestel1978vector} for details) which characterizes relatively weakly compact subsets of $\mathcal{L}^{1}(\Omega,{\xi},\mathbb{R}^n)$. 
			\begin{theorem}[\textbf{Dunford's Theorem} (\cite{diestel1978vector})]\label{ThmDunford}
				Let $(\widehat{\Omega},{\mathcal{A}},{\mu})$ be a finite measure space and $\mathbb{X}$ be a Banach space such that both $\mathbb{X}$ and $\mathbb{X}^{*}$ satisfy the Radon-Nikodym property.\footnote{See \cite{diestel1978vector}, Chap III for the definition of Radon-Nikodym property. It is known that if $\mathbb{X}$ is a reflexive Banach space, then $\mathbb{X}$ has the Radon-Nikodym property. In our case, $\mathbb{X}=\mathbb{R}^n$. Thus $\mathbb{X}$ follows the Radon Nikodym property.} A subset $\mathbb{K}$ of $\mathcal{L}^{1}({\mu},\mathbb{X})$ is relatively weakly compact if 
				\begin{enumerate}[(i)]
					\item the subset $\mathbb{K}$ is bounded,
					\vspace{-2mm}
					\item the subset $\mathbb{K}$ is uniformly integrable, and 
					\vspace{-2mm}
					\item for each $E \in \mathcal{A}$, the set $\Big\{\bigintsss_{E}f({\omega}) d{\mu}(d{\omega}) : f \in \mathbb{K}\Big\}$ is relatively weakly compact.
				\end{enumerate}
			\end{theorem}
			We apply Theorem \ref{ThmDunford} with $\widehat{\Omega}=\Omega$, $\mu={\xi}$, $\mathbb{X}=\mathbb{R}^n$, and $\mathbb{K}=\Sigma$. We need to show that the space of Bayesian strategies $\Sigma$ satisfies the conditions stated in Theorem \ref{ThmDunford}. Note that $\|\sigma(\omega)\|\leq\sqrt{n}$, for all $\omega \in \Omega$. Therefore, we have $$\vertiii{\sigma}=\int_{\Omega}\|\sigma(\omega)\|{\xi}(d\omega)\leq\sqrt{n},$$ which in particular implies that $\sup_{\sigma \in \Sigma}\vertiii{\sigma}\leq \sqrt{n}$. Therefore the subset $\Sigma$ is bounded. 
			
			The fact that $\Sigma$ is uniformly integrable follows since every Bayesian strategy $\sigma \in \Sigma$ is uniformly bounded by $\sqrt{n}$, for every $\omega \in \Omega$, in which case we have $$\lim_{\epsilon \rightarrow \infty}\sup_{\sigma \in \Sigma}\int_{\|\sigma\|>\epsilon}\|\sigma(\omega)\|{\xi}(d\omega)\leq\sqrt{n}\lim_{\epsilon \rightarrow \infty}\sup_{\sigma \in \Sigma}{\xi}(\|\sigma\|>\epsilon) = 0.$$ 
			
			Finally we verify condition (iii) of Theorem \ref{ThmDunford}. To this end, let $E \in \mathcal{B}_{\Omega}$. Consider the set $$\mathcal{I}_{E}:=\Big\{\int_{E}\sigma(\omega){\xi}(d\omega):\sigma \in \Sigma\Big\}.$$ The fact that $\Delta$ is a compact convex subset of $\mathbb{R}^{n}$ and that $\text{Range}(\sigma) \subseteq \Delta$ for all $\sigma \in \Sigma$, together imply that $\mathcal{I}_{E}$ is relatively compact. Since $E$ is arbitrary, condition (iii) of Theorem \ref{ThmDunford} is satisfied. 
			
			This proves that $\Sigma$ is relatively compact under the weak topology. However since $\Sigma$ is itself closed under the weak topology, it is compact. This concludes the proof of lemma \ref{LemSigmaCpt}.
		\end{proof}
		
		To conclude the proof of Theorem \ref{ThmFixedPt}, we need to show that the mapping $\sigma \mapsto {\beta}_{\epsilon}(\sigma)$ is continuous in the weak topology. We show this in the following lemma.
		\begin{lemma}\label{LemBRWeakCont}
			The mapping $\sigma \mapsto {\beta}_{\epsilon}(\sigma)$ is continuous in the weak topology. 	
		\end{lemma}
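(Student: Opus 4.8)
The plan is to exploit the pointwise representation of the regularized best response established in the proof of Proposition \ref{Prop}, namely that for every $\sigma \in \Sigma$ and every $\theta \in \Theta$ one has $\bm{\beta}_{\epsilon}[\sigma](\theta) = \nabla v^{*}_{\epsilon}(\mathcal{G}(\sigma,\theta))$, where $v^{*}_{\epsilon}$ is the convex conjugate of $\epsilon v$ and $\nabla v^{*}_{\epsilon}:\mathbb{R}^{n}\to\Delta(S)^{\circ}$ is $1/(\bm{\gamma}\epsilon)$-Lipschitz, hence continuous. In this form $\bm{\beta}_{\epsilon}$ is the composition $\sigma\mapsto\mathcal{G}(\sigma,\cdot)\mapsto\nabla v^{*}_{\epsilon}\circ\mathcal{G}(\sigma,\cdot)$, so I would transport the weak continuity of $\mathcal{G}$ through the Lipschitz map $\nabla v^{*}_{\epsilon}$ and then through the integral against test functions. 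Since the map to be analyzed lives on the weakly compact set $\Sigma$ (Lemma \ref{LemSigmaCpt}), and since $\mathcal{B}(\Theta)$ is countably generated ($\Theta$ being a separable metric space), $\mathcal{L}^{\infty}(\Theta,\bm{\xi},{\mathbb{R}^{n}}^{*})$ contains a countable family of functionals separating the points of $\widehat{\Sigma}$; a weakly compact set carrying such a family is weakly metrizable, so the weak topology restricted to $\Sigma$ is metrizable and it suffices to check that $\bm{\beta}_{\epsilon}$ is weakly \emph{sequentially} continuous on $\Sigma$.

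To do this, I would take a sequence $(\sigma_{n})_{n\ge1}\subseteq\Sigma$ with $\sigma_{n}\to^{w}\sigma\in\Sigma$. Weak continuity of $\mathcal{G}$ supplies a $\bm{\xi}$-null set $N$ off which $\sigma\mapsto\mathcal{G}(\sigma,\theta)$ is weakly continuous, so $\mathcal{G}(\sigma_{n},\theta)\to\mathcal{G}(\sigma,\theta)$ in $\mathbb{R}^{n}$ for every $\theta\in\Theta\setminus N$; applying the continuous map $\nabla v^{*}_{\epsilon}$ then gives $\bm{\beta}_{\epsilon}[\sigma_{n}](\theta)\to\bm{\beta}_{\epsilon}[\sigma](\theta)$ for $\bm{\xi}$-a.e. $\theta$. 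To upgrade this $\bm{\xi}$-a.e. convergence to weak convergence in $\widehat{\Sigma}$, I would fix an arbitrary $\widehat{\rho}\in\mathcal{L}^{\infty}(\Theta,\bm{\xi},{\mathbb{R}^{n}}^{*})$ and observe that $\langle\bm{\beta}_{\epsilon}[\sigma_{n}],\widehat{\rho}\rangle(\theta)\to\langle\bm{\beta}_{\epsilon}[\sigma],\widehat{\rho}\rangle(\theta)$ for a.e. $\theta$, while the bound $|\langle\bm{\beta}_{\epsilon}[\sigma_{n}],\widehat{\rho}\rangle(\theta)|\le\|\bm{\beta}_{\epsilon}[\sigma_{n}](\theta)\|\,\|\widehat{\rho}(\theta)\|\le\sqrt{n}\,\|\widehat{\rho}\|_{\mathcal{L}^{\infty}}$ holds because every Bayesian strategy is $\Delta(S)$-valued; this constant is $\bm{\xi}$-integrable since $\bm{\xi}$ is a probability measure, so the dominated convergence theorem yields $\int_{\Theta}\langle\bm{\beta}_{\epsilon}[\sigma_{n}],\widehat{\rho}\rangle(\theta)\bm{\xi}(d\theta)\to\int_{\Theta}\langle\bm{\beta}_{\epsilon}[\sigma],\widehat{\rho}\rangle(\theta)\bm{\xi}(d\theta)$. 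As $\widehat{\rho}$ is arbitrary, $\bm{\beta}_{\epsilon}[\sigma_{n}]\to^{w}\bm{\beta}_{\epsilon}[\sigma]$, which is the required sequential continuity.

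The part I expect to need the most care is the reduction from topological to sequential continuity: the Brouwer--Schauder--Tychonoff theorem (Theorem \ref{bst}), which Theorem \ref{ThmFixedPt} invokes, needs genuine continuity, and the weak topology on $\widehat{\Sigma}=\mathcal{L}^{1}(\Theta,\bm{\xi},\mathbb{R}^{n})$ is \emph{not} metrizable globally (its dual $\mathcal{L}^{\infty}$ is nonseparable). What I would have to make precise is that metrizability is nonetheless recovered on the weakly compact set $\Sigma$, via a countable separating family of functionals built from a countable generating algebra for $\mathcal{B}(\Theta)$ together with the coordinate functionals on $(\mathbb{R}^{n})^{*}$, and that on a compact space a continuous injection into $\mathbb{R}^{\mathbb{N}}$ is a homeomorphism onto its image. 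A secondary point is quoting the representation $\bm{\beta}_{\epsilon}[\sigma](\theta)=\nabla v^{*}_{\epsilon}(\mathcal{G}(\sigma,\theta))$ and the Lipschitz property of $\nabla v^{*}_{\epsilon}$ in exactly the form proved in Appendix \ref{AppProp}; alternatively one could sidestep the conjugate and deduce continuity of $\bm{p}\mapsto\arg\max_{\bm{y}\in\Delta(S)}\langle\bm{y},\bm{p}\rangle-\epsilon v(\bm{y})$ from Berge's maximum theorem plus uniqueness of the maximizer (guaranteed by strong convexity of $v$), but the Lipschitz estimate is the more economical route.
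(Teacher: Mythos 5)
Your proposal is correct and follows essentially the same route as the paper's proof: the representation $\bm{\beta}_{\epsilon}[\sigma](\theta)=\nabla v^{*}_{\epsilon}(\mathcal{G}(\sigma,\theta))$ together with the $(\epsilon\bm{\gamma})^{-1}$-Lipschitz property of $\nabla v^{*}_{\epsilon}$ and the weak continuity of $\mathcal{G}$ gives $\bm{\xi}$-a.e.\ pointwise convergence of $\bm{\beta}_{\epsilon}[\sigma_{n}](\theta)$ along a weakly convergent sequence, and integrating against arbitrary $\widehat{\rho}\in\mathcal{L}^{\infty}(\Theta,\bm{\xi},{\mathbb{R}^{n}}^{*})$ via dominated convergence yields $\bm{\beta}_{\epsilon}(\sigma_{n})\to^{w}\bm{\beta}_{\epsilon}(\sigma)$, exactly as in the paper. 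The only difference is that you explicitly close the gap between weak sequential continuity and genuine weak continuity by metrizing the weak topology on the weakly compact set $\Sigma$ through a countable separating family of functionals, a point the paper's argument leaves implicit.
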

		\begin{proof}
			To prove the lemma, consider a sequence $(\sigma_n)_{n \geq 1} \subseteq \Sigma$, such that $\sigma_n \rightarrow^{w} \sigma$. We need to show that ${{\beta}}_{\epsilon}(\sigma_n) \rightarrow^{w} {{\beta}}_{\epsilon}(\sigma)$. Since the regularizer is a ${\gamma}$-strongly convex function and $\mathcal{G}$ is a weakly continuous Bayesian population game, there exists $\bar{\Omega}\subseteq\Omega$ with ${\xi}({\bar{\Omega}})=1$ such that for every fixed $\omega \in \bar{\Omega}$, we have that
			\begin{align*}
				\|{{\beta}}_{\epsilon}[\sigma_n](\omega)-{{\beta}}_{\epsilon}[\sigma](\omega)\|&=\|\nabla \textbf{v}^{*}_{\epsilon}(\mathcal{G}(\sigma_n,\omega))-\nabla \textbf{v}^{*}_{\epsilon}(\mathcal{G}(\sigma,\omega))\|\\
				&\leq \dfrac{1}{\epsilon{\gamma}}\|\mathcal{G}(\sigma_n,\omega)-\mathcal{G}(\sigma,\omega)\|\rightarrow 0.
			\end{align*}
			This proves the fact that ${{\beta}}_{\epsilon}[\sigma_n](\omega)\rightarrow{{\beta}}_{\epsilon}[\sigma](\omega)$ for every $\omega \in \bar{\Omega}$. Now let $g \in \mathcal{L}^{\infty}(\Omega,{\xi},{\mathbb{R}^{n}}^{*})$. This implies that $g(\omega)$ is a bounded linear functional from $\mathbb{R}^n$ to $\mathbb{R}$ for every $\omega \in \Omega$. Therefore, for every $\omega \in \Omega$, we have $g[\omega](\textbf{x}_n) \rightarrow g[\omega](\textbf{x})$, whenever $\textbf{x}_n \rightarrow \textbf{x}$. Since ${{\beta}}_{\epsilon}[\sigma_n](\omega)\rightarrow{{\beta}}_{\epsilon}[\sigma](\omega)$ for every $\omega \in \bar{\Omega}$, we have that 
			\begin{align*}
				\int_{\Omega}\langle {{\beta}}_{\epsilon}(\sigma_n),g\rangle(\omega){\xi}(d\omega)&=\int_{\Omega}g(\omega)({{\beta}}_{\epsilon}[\sigma_n](\omega)){\xi}(d\omega)\\
				&=\int_{\bar{\Omega}}\langle g(\omega),{{\beta}}_{\epsilon}[\sigma_n](\omega) \rangle {\xi}(d\omega)\\
				&\rightarrow \int_{\bar{\Omega}}\langle g(\omega),{{\beta}}_{\epsilon}[\sigma](\omega) \rangle {\xi}(d\omega)\\
				&=\int_{\Omega}g(\omega)({{\beta}}_{\epsilon}[\sigma](\omega)){\xi}(d\omega)\\
				&=\int_{\Omega}\langle {{\beta}}_{\epsilon}(\sigma),g\rangle(\omega){\xi}(d\omega).
			\end{align*}
			Since $g \in \mathcal{L}^{\infty}(\Omega,\xi,{\mathbb{R}^n}^*)$ is arbitrary, this proves that the mapping $\sigma \mapsto {{\beta}}_{\epsilon}(\sigma)$ is continuous in the weak topology on $\Sigma$ which completes the proof of Lemma \ref{LemBRWeakCont}.
		\end{proof}
		
		Since $\Sigma$ is a non empty compact convex subset of $\widehat{\Sigma}$ and ${{\beta}}_{\epsilon}$ is a continuous map, the conditions of Theorem \ref{bst} are satisfied and the existence of a fixed point is established. This concludes the proof of Theorem \ref{ThmFixedPt}.

		\subsection{Proof of Corollary \ref{CorBayEqExist}}\label{AppCorBayEqExist}
		
		In order to prove the corollary, it is enough to show that the aggregate Bayesian population game $\mathcal{G}$ is weakly continuous. Note that from the definition of $\mathcal{G}$ it is enough to show that $\mathcal{E}$ is continuous. Let $\sigma_n \rightarrow \sigma$. We show that $\mathcal{E}(\sigma_n) \rightarrow \mathcal{E}(\sigma)$. To this end, fix $i \in S$. Define the projection map ${\pi}_{i} : \mathcal{C}_n(\Delta) \rightarrow {\mathbb{R}^{n}}^{*}$, such that for every $\omega \in \mathcal{C}_n(\Delta)$, $${\pi}_{i}[\omega](\textbf{x}):=\textbf{x}_{i}, \hspace{3mm} \text{ for all } \textbf{x} \in \mathbb{R}^n.$$ Note that ${\pi}_{i} \in \mathcal{L}^{\infty}(\mathcal{C}_n(\Delta),{\xi},{\mathbb{R}^{n}}^{*})$ for all $i \in S$. Since $\sigma_{n} \to^{w} \sigma$, we have from the definition of weak topology that 
		\begin{align*}
			\mathcal{E}^{i}(\sigma_n)&=\int_{\Omega}\sigma^{i}_{n}(\omega){\xi}(d\omega)\\
			&=\int_{\Omega}{\pi}_{i}(\omega)(\sigma_n(\omega)){\xi}(d\omega)\\
			&=\int_{\Omega}\langle \sigma_n,{\pi}_{i}\rangle(\omega){\xi}(d\omega)\\
			&\rightarrow \int_{\Omega}\langle \sigma,{\pi}_{i}\rangle(\omega){\xi}(d\omega)\\
			&=\int_{\Omega}{\pi}_{i}(\omega)(\sigma(\omega)){\xi}(d\omega)\\
			&=\int_{\Omega}\sigma^{i}(\omega){\xi}(d\omega)\\
			&=\mathcal{E}^{i}(\sigma).
		\end{align*}
		This proves that $\mathcal{E}(\sigma_n) \rightarrow \mathcal{E}(\sigma)$. Thus $\sigma_n \to^{w} \sigma$ implies that $\mathcal{G}(\sigma_n,\omega)=\omega(\mathcal{E}(\sigma_n)) \to \omega(\mathcal{E}(\sigma))=\mathcal{G}(\sigma,\omega)$ for all $\omega \in \mathcal{C}_n(\Delta)$. Hence $\mathcal{G}$ is weakly continuous.
		This concludes the proof of Corollary \ref{CorBayEqExist}.

		\subsection{Proof of Theorem \ref{ThmBayesianAccuPoints}}\label{AppThmBayesianAccuPoints}
		
		Let $(\epsilon_m)_{m \geq 1}$ be an arbitrary sequence of positive noise parameters such that $\epsilon_m \downarrow 0$. For every $m \geq 1$, let $\sigma^{\circ}_m$ denote the corresponding $\epsilon_m$-regularized Bayesian equilibrium, the existence of which is guaranteed by Theorem \ref{ThmFixedPt}. Note that by Lemma \ref{LemSigmaCpt}, the space of Bayesian strategies $\Sigma$ is compact relative to the weak topology. Thus there exists a subsequence $(\sigma_{m_k})_{k \geq 1} \subseteq \Sigma$ such that $\sigma^{\circ}_{m_k} \to^{w} \sigma^{\circ}$, for some $\sigma^{\circ} \in \Sigma$. We need to show that $\sigma^{\circ}$ is a Bayesian equilibrium of the game $\mathcal{G}$. We first prove the following lemma.

		\begin{lemma}\label{LemOriginalGame-SlicedGame}
			Fix a noise parameter $\epsilon>0$. Then $\sigma^{\circ}_{\epsilon}$ is a regularized Bayesian equilibrium of the game $\mathcal{G}$ iff $\sigma^{\circ}_{\epsilon}(\omega)$ is a regularized equilibrium of the $\omega$-sliced game $\mathcal{G}_{\omega}$ for ${\xi}$-a.e $\omega \in \Omega$.
		\end{lemma}
		
		\begin{proof}
			The proof of the lemma follows using the definitions of $\beta_{\epsilon}$ and $\beta^{\epsilon}_{\omega}$ and is left to the reader.
		\end{proof}
		
		We now proceed with the proof of the theorem. We shall prove this by the method of contradiction. Suppose on the contrary that $\sigma^{\circ}$ is not a Bayesian equilibrium. Then by definition, there exists $\bar{\Omega} \subseteq \Omega$ with ${\xi}(\bar{\Omega})>0$ such that $\sigma^{\circ}(\omega) \notin {\beta}[\sigma^{\circ}](\omega)$ for all $\omega \in \bar{\Omega}$, which in principle implies that $\sigma^{\circ}(\omega) \notin \arg\max_{\textbf{y} \in \Delta}\langle \textbf{y}, \mathcal{G}(\sigma^{\circ},\omega)\rangle$ for all $\omega \in \bar{\Omega}$. Thus, there exists $\textbf{i},\textbf{j}:\bar{\Omega}\to S$ such that for every $\omega \in \bar{\Omega}$, there exists $\textbf{j}({\omega}) \in S$, $\textbf{i}(\omega) \in \text{supp}(\sigma^{\circ}(\omega))$ and an $\eta(\omega)>0$ satisfying the following inequality:
		$$\mathcal{G}^{\textbf{j}(\omega)}(\sigma^{\circ},\omega) > 2\eta(\omega)+ \mathcal{G}^{\textbf{i}(\omega)}(\sigma^{\circ},\omega).$$
		Since $\mathcal{G}$ is a weakly continuous population game, there exists for every $\omega \in \bar{\Omega}$, a $K(\omega) \geq 1$ sufficiently large enough such that
		\begin{equation}\label{Contra1}
			\mathcal{G}^{\textbf{j}(\omega)}(\sigma^{\circ}_{m_k},\omega) > \eta(\omega)+ \mathcal{G}^{\textbf{i}(\omega)}(\sigma^{\circ}_{m_k},\omega), \hspace{3mm} \text{ for sufficiently large } k \geq K(\omega).
		\end{equation}
		
		We now make the following observation. By our assumptions, we have that $\sigma^{\circ}_{m_k} \to^{w} \sigma^{\circ}$. We show that $\sigma^{\circ}_{m_k}(\omega) \to \sigma^{\circ}(\omega)$ as $k \to \infty$ for every $\omega \in \Omega$. To prove the claim, fix an $\omega \in \Omega$ and $i \in S$. Define a function $g^i_{\omega}:\Omega \to \mathbb{R}^n$ as $g^i_{\omega}(\bar{\omega}):=\textbf{e}_i\mathbb{1}_{\omega}(\bar{\omega})$, for all $\bar{\omega} \in \Omega$. It then follows that $\|g^{i}_{\omega}\|_{\infty}=1$ and that $g^{i}_{\omega} \in \mathcal{L}^{\infty}(\Omega,\xi,{\mathbb{R}^{n}}^*)$, for every $\omega \in \Omega$ and every $i \in S$. Since $\sigma^{\circ}_{m_k} \to \sigma^{\circ}$, we have by definition of weak topology that,
		\begin{align*}
			\sigma^{\circ,i}_{m_k}(\omega)&=\langle \sigma^{\circ}_{m_k}(\omega), \textbf{e}_i \rangle\\
			&=\int_{\Omega}\langle \sigma^{\circ}_{m_k}, g^{i}_{\omega}\rangle (\bar{\omega}) \xi(d\bar{\omega})\\
			& \to \int_{\Omega}\langle \sigma^{\circ}, g^{i}_{\omega}\rangle (\bar{\omega}) \xi(d\bar{\omega})\\
			&=\langle \sigma^{\circ}(\omega), \textbf{e}_i \rangle\\
			&=\sigma^{\circ,i}(\omega).
		\end{align*} 

		Since $\textbf{i}(\omega) \in \text{supp}(\sigma^{\circ}(\omega))$ for all $\omega \in \bar{\Omega}$, we must have that $\sigma^{\circ,\textbf{i}(\omega)}(\omega)>0$ for all $\omega \in \bar{\Omega}$. However, we show that this cannot happen. Thus, in order to arrive at a contradiction, we show that $\sigma^{\circ,\textbf{i}(\omega)}_{m_k}(\omega) \to 0$ for some $\omega \in \bar{\Omega}$. We show this separately for regularizers satisfying Condition C1 followed by regularizers satisfying Condition C2.

		\begin{proof}[Proof for regularizers satisfying Condition C1]
			Proceeding, first let us suppose that the regularizer satisfies Condition C1 (Definition \ref{ConditionC1}). We prove a much stronger statement, that is, we show that $\sigma^{\circ,\textbf{i}(\omega)}_{m_k}(\omega) \to 0$ for all $\omega \in \bar{\Omega}$. To this end, fix $\omega \in \bar{\Omega}$. Then by (\ref{Contra1}), there exists $\textbf{i}(\omega), \textbf{j}(\omega) \in S$ such that $\textbf{u}^{\textbf{j}(\omega)}_{m_k}:=\mathcal{G}^{\textbf{j}(\omega)}(\sigma^{\circ}_{m_k},\omega)>\mathcal{G}^{\textbf{i}(\omega)}(\sigma^{\circ}_{m_k},\omega)=:\textbf{u}^{\textbf{i}(\omega)}_{m_k}$ for $k \geq K(\omega)$. Since $\textbf{v}$ satisfies Condition C1, we have 
			\begin{equation*}
				\nabla \textbf{v}^{*}_{\epsilon_{m_k},\textbf{j}(\omega)}(\mathcal{G}(\sigma^{\circ}_{m_k},\omega)) \geq \lambda(\epsilon_{m_k}) \textbf{v}^{*}_{\epsilon_{m_k},\textbf{i}(\omega)}(\mathcal{G}(\sigma^{\circ}_{m_k},\omega)), \hspace{3mm} \text{ for all } k \geq K(\omega).
			\end{equation*}
			Next, note that $\nabla \textbf{v}^{*}_{\epsilon_{m_k},\textbf{j}(\omega)}(\mathcal{G}(\sigma^{\circ}_{m_k},\omega))=\beta_{\epsilon_{m_k},\textbf{j}(\omega)}[\sigma^{\circ}_{m_k}](\omega)$ and $\nabla \textbf{v}^{*}_{\epsilon_{n_k},\textbf{i}(\omega)}(\mathcal{G}(\sigma^{\circ}_{m_k},\omega))=\beta_{\epsilon_{m_k},\textbf{i}(\omega)}[\sigma^{\circ}_{m_k}](\omega)$. Also since $\sigma^{\circ}_{m_k}$ is a $\epsilon_{m_k}$-regularized Bayesian equilibrium of the game $\mathcal{G}$, we have that $\nabla \textbf{v}^{*}_{\epsilon_{m_k},\textbf{j}(\omega)}(\mathcal{G}(\sigma^{\circ}_{m_k},\omega))=\sigma^{\circ,\textbf{i}(\omega)}_{m_k}(\omega)$ and $\nabla \textbf{v}^{*}_{\epsilon_{m_k},\textbf{i}(\omega)}(\mathcal{G}(\sigma^{\circ}_{m_k},\omega))=\sigma^{\circ,\textbf{i}(\omega)}_{m_k}(\omega)$. In what follows, we have
			\begin{equation*}
				\sigma^{\circ,\textbf{j}(\omega)}_{m_k}(\omega) \geq \lambda(\epsilon_{m_k})\sigma^{\circ,\textbf{i}(\omega)}_{m_k}(\omega), \hspace{3mm} \text{ for all } k \geq K(\omega).
			\end{equation*}
			Since $\epsilon_{m_k}$ converges to $0$ as $k \to \infty$ and that $\textbf{v}$ satisfies Condition C1, we have $\lambda(\epsilon_{m_k}) \to \infty$ as $k \to \infty$. Thus we can conclude that $\sigma^{\circ,\textbf{i}(\omega)}_{m_k}(\omega) \to 0$.  However, this leads to a contradiction since $\textbf{i}(\omega) \in \text{supp}(\sigma^{\circ}(\omega))$. Since $\omega \in \bar{\Omega}$ is arbitrary, we have that $\sigma^{\circ,\textbf{i}(\omega)}_{m_k}(\omega) \to 0$ for all $\omega \in \bar{\Omega}$. Thus, the required claim is proved. Therefore $\sigma^{\circ}$ must be a Bayesian equilibrium of the original game $\mathcal{G}$.
		\end{proof}
		
		\begin{proof}[Proof for regularizers satisfying Condition C2]
			
			Next we establish the same under Condition C2 (Definition \ref{ConditionC2}). We again prove this via the method of contradiction. However, contrary to Case 1, we show that $\sigma^{\circ,\textbf{i}(\omega)}_{m_k}(\omega) \to 0$ for some $\omega \in \bar{\Omega}$, which, as mentioned in the previous arguments, is enough to conclude that $\sigma^{\circ}$ is a Bayesian equilibrium. Suppose not. Then we must have that $\sigma^{\circ,\textbf{i}(\omega)}_{m_k}(\omega)$ converges to $c(\omega)$, where $c(\omega)>0$ for all $\omega \in \bar{\Omega}$.\footnote{We note that the convergence of $\sigma^{\circ,\textbf{i}(\omega)}_{m_k}(\omega)$ is guaranteed by weak convergence of $(\sigma^{\circ}_{m_k})_{k \geq 1}$.} As a result, $\sigma^{\circ,\textbf{i}(\omega)}_{m_k}(\omega)$ is bounded away from $0$ for all $\omega \in \bar{\Omega}$. Thus, we have via Condition C2 that there exists a sequence of shifted Bayesian strategies $(\widetilde{\sigma}^{\circ}_{m_k})_{k \geq 1}$, where $(\widetilde{\sigma}^{\circ}_{m_k})_{k \geq 1}$ is a shift of $(\sigma^{\circ}_{m_k})_{k \geq 1}$ over $\bar{\Omega}$, such that, after some algebra, the following condition holds true for all $\omega \in \bar{\Omega}$:
			\begin{align*} \sum_{\alpha \in \{\textbf{i}(\omega), \textbf{j}(\omega)\} }\mathcal{G}^{\alpha}({\sigma}^{\circ}_{m_k},\omega)\widetilde{\sigma}^{\circ,\alpha}_{m_k}(\omega)&\geq 2\eta(\omega)(\sigma^{\circ,\textbf{i}(\omega)}_{m_k}(\omega)-\widetilde{\sigma}^{\circ,\textbf{i}(\omega)}_{m_k}(\omega))+\sum_{\alpha \in \{\textbf{i}(\omega), \textbf{j}(\omega)\} }\mathcal{G}^{\alpha}({\sigma}^{\circ}_{m_k},\omega){\sigma}^{\circ,\alpha}_{m_k}(\omega)
			\end{align*}
			for sufficiently large $K_2(\omega)$. Now fix an $\epsilon>0$. Note that since $\epsilon_{m_k} \downarrow 0$, there exists $N \geq 1$ such that $\epsilon_{m_k}<\epsilon$ for all $k \geq N$.
			Also since the regularizer $\textbf{v}$ satisfies Condition C2. Thus, in particular, it satisfies Condition C2 with respect to the sequence of Bayesian strategies $(\sigma^{\circ}_{m_k})_{k \geq 1}$ over the subset $\bar{\Omega}$, we have for every $\omega \in \bar{\Omega}$,
			\begin{align*}
				|\textbf{v}_{\epsilon_{m_k}}(\widetilde{\sigma}^{\circ}_{m_k}(\omega))-\textbf{v}_{\epsilon_{m_k}}(\sigma^{\circ}_{m_k}(\omega))|&= |\epsilon_{m_k}||\textbf{v}_{\epsilon_{m_k}}(\widetilde{\sigma}^{\circ}_{m_k}(\omega))-\textbf{v}_{\epsilon_{m_k}}(\sigma^{\circ}_{m_k}(\omega))|\\
				& \leq \epsilon(\sigma^{\circ, \textbf{i}(\omega)}_{m_k}(\omega)-\widetilde{\sigma}^{\circ, \textbf{i}(\omega)}_{m_k}(\omega))
			\end{align*}	
			for sufficiently large $k \geq \max\{N,K_2(\omega)\}$. As a result, we have that $$\textbf{v}_{\epsilon_{m_k}}(\widetilde{\sigma}^{\circ}_{m_k}(\omega)) \leq \textbf{v}_{\epsilon_{m_k}}(\sigma^{\circ}_{m_k}(\omega))+ \epsilon(\sigma^{\circ, i(\omega)}_{m_k}(\omega)-\widetilde{\sigma}^{\circ, i(\omega)}_{m_k}(\omega))$$ for $k \geq \max\{N,K_2(\omega)\}$. Now note that for all $\omega \in \bar{\Omega}$ and $\alpha \in S \setminus \{\textbf{i}(\omega), \textbf{j}(\omega)\}$, we have $$\sum_{\alpha \in S \setminus \{\textbf{i}(\omega), \textbf{j}(\omega)\}}\mathcal{G}^{\alpha}(\sigma^{\circ}_{m_k},\omega)\widetilde{\sigma}^{\circ, \alpha}_{m_k}(\omega)=\sum_{\alpha \in S \setminus \{\textbf{i}(\omega), \textbf{j}(\omega)\}}\mathcal{G}^{\alpha}(\sigma^{\circ}_{m_k},\omega){\sigma}^{\circ, \alpha}_{m_k}(\omega).$$ 
			
			Combining all the above observations, we arrive at the following inequality:
			\begin{align}\label{PrevIneq}
				\langle \mathcal{G}({\sigma}^{\circ}_{m_k},\omega),\widetilde{\sigma}^{\circ}_{m_k}(\omega) \rangle - \epsilon_{m_k}\textbf{v}(\widetilde{\sigma}^{\circ}_{m_k}(\omega))  &\geq \eta(\omega)(\sigma^{\circ,\textbf{i}(\omega)}_{m_k}(\omega)-\widetilde{\sigma}^{\circ,\textbf{i}(\omega)}_{m_k}(\omega))\\ \nonumber
				& \hspace{2cm}+\langle \mathcal{G}({\sigma}^{\circ}_{m_k},\omega),{\sigma}^{\circ}_{m_k}(\omega) \rangle- \epsilon_{m_k}\textbf{v}(\sigma^{\circ}_{m_k}(\omega))
			\end{align}
			for large $k \geq \max\{N,K_2(\omega)\}$. Thus in terms of the sliced game, the above inequality can equivalently be written as
			\begin{align}\label{PrevIneq2}
				\langle \mathcal{G}_{\omega}({\sigma}^{\circ}_{m_k}),\widetilde{\sigma}^{\circ}_{m_k}(\omega) \rangle - \epsilon_{m_k}\textbf{v}(\widetilde{\sigma}^{\circ}_{m_k}(\omega))  &\geq \eta(\omega)(\sigma^{\circ,\textbf{i}(\omega)}_{m_k}(\omega)-\widetilde{\sigma}^{\circ,\textbf{i}(\omega)}_{m_k}(\omega))\\ \nonumber
				& \hspace{2cm}+\langle \mathcal{G}_{\omega}({\sigma}^{\circ}_{m_k}),{\sigma}^{\circ}_{m_k}(\omega) \rangle- \epsilon_{m_k}\textbf{v}(\sigma^{\circ}_{m_k}(\omega))
			\end{align}
			for large $k \geq \max\{N,K_2(\omega)\}$.
			However note that since $\sigma^{\circ}_{m_k}$ is an $\epsilon_{m_k}$-regularized Bayesian equilibrium, we have by Lemma \ref{LemOriginalGame-SlicedGame} that $\sigma^{\circ}_{m_k}(\omega)$ is an $\epsilon_{m_k}$-perturbed equilibrium of the sliced game $\mathcal{G}_{\omega}$ for every $k \geq 1$. Thus we have by definition that $\sigma^{\circ}_{m_k}(\omega)={\beta}_{\epsilon_{m_k}}[\sigma^{\circ}_{m_k}](\omega)$ for ${\xi}$-a.e $\omega \in \Omega$. As a result, it should be the case that $$\langle \mathcal{G}_{\omega}({\sigma}^{\circ}_{m_k}),{\sigma}^{\circ}_{m_k}(\omega) \rangle- \epsilon_{m_k}\textbf{v}(\sigma^{\circ}_{m_k}(\omega)) \geq \langle \mathcal{G}_{\omega}({\sigma}^{\circ}_{m_k}),\textbf{y} \rangle- \epsilon_{m_k}\textbf{v}(\textbf{y}), \hspace{3mm} \text{ for all } \textbf{y} \in \Delta.$$ 
			
			But we observe from (\ref{PrevIneq2}) that for all $\omega \in \bar{\Omega}$ the above inequality does not hold. Therefore, we have constructed a sequence of shifted Bayesian strategies $(\widetilde{\sigma}^{\circ}_{m_k})_{k \geq 1}$ which have a greater expected payoff under the $\omega$-sliced game $\mathcal{G}_{\omega}$ for every $\omega \in \bar{\Omega}$. We also note that the shifted sequence of Bayesian strategies differ from the original sequence of regularized Bayesian equilibria on the set $\bar{\Omega}$ of positive measure. Hence, arrive at a contradiction.
			Thus, $\sigma^{\circ, \textbf{i}(\omega)}_{m_k}(\omega) \to 0$ for some $\omega \in \bar{\Omega}$. Also since we have that $\sigma^{\circ}_{m_k} \to \sigma^{\circ}$ relative to the weak topology on $\Sigma$, we have 
			that $\sigma^{\circ,\textbf{i}(\omega)}(\omega)=0$. But again, we arrive at a contradiction since by our hypothesis, $\textbf{i}(\omega) \in \text{supp}(\sigma^{\circ}(\omega))$ for all $\omega \in \bar{\Omega}$. Therefore, $\sigma^{\circ}$ is a Nash equilibrium of the original game $\mathcal{G}$.
		\end{proof}

		\subsection{Regularizers satisfy Condition C1 or Condition C2}
		
		In this section, we shall show that the Shannon-Gibbs entropy satisfies Condition C1, whereas the Tsallis entropy and the Burg entropy satisfies Condition C2.
		
		\subsubsection{Shannon-gibbs entropy satisfies condition C1}
		In this subsection, we show that the Shannon-Gibbs entropy (Example \ref{ExampleShannon}) satisfies Condition C1 (Definition \ref{ConditionC1}). Fix $\textbf{u} \in \mathbb{R}^n$ and $i,j \in S$ with $i \neq j$ such that $\textbf{u}_j>\textbf{u}_i$. In that case, if follows from (\ref{LogitClosedForm}) that
		\begin{equation*}
			\frac{\nabla \textbf{v}^{*}_{\textbf{s},\epsilon,j}(\textbf{u})}{\nabla \textbf{v}^{*}_{\textbf{s},\epsilon,i}(\textbf{u})}=\frac{\exp(\epsilon^{-1}\textbf{u}_j)}{\exp(\epsilon^{-1}\textbf{u}_i)}=\exp(\epsilon^{-1}(\textbf{u}_j-\textbf{u}_i)).
		\end{equation*}
		This Condition C1 is satisfied with $\lambda(\epsilon):=\exp(\epsilon^{-1}(\textbf{u}_j-\textbf{u}_i))$. Note that since $\textbf{u}_j>\textbf{u}_i$, this implies that $\lambda(\epsilon) \to \infty$ as $\epsilon \to 0$.
		
		\subsubsection{Tsallis entropy satisfies Condition C2}
		In this subsection, we show that Tsallis entropy (Example \ref{ExampleTsallis}) satisfies Condition C2 (Definition \ref{ConditionC2}) with respect to the sequence $(\sigma^{\circ}_{m_k})_{k \geq 1}$, where $(\sigma^{\circ}_{m_k})_{k \geq 1}$ is sequence of Bayesian strategies obtained in Theorem \ref{ThmFixedPt}. Fix an arbitrary $\bar{\Omega} \subseteq \Omega$ such that $\xi(\bar{\Omega})>0$. Consider $\omega \in \bar{\Omega}$, distinct $\textbf{i},\textbf{j}:\bar{\Omega} \to S$ with $\lim_{k \to \infty}\sigma^{\circ,\textbf{i}(\omega)}_{m_k}(\omega) > 0$ and $\mathcal{G}^{\textbf{j}(\omega)}(\sigma^{\circ}_{m_k},\omega)>\mathcal{G}^{\textbf{i}(\omega)}(\sigma^{\circ}_{m_k},\omega)$ for sufficiently large $k$ (depending on $\omega$). The fact that $\lim_{k \to \infty}\sigma^{\circ,\textbf{i}(\omega)}_{m_k}(\omega) > 0$ implies that $\lim_{k \to \infty}\sigma^{\circ,\textbf{i}(\omega)}_{m_k}(\omega)$ is bounded away from $0$, in which case we can construct a corresponding sequence of Bayesian strategies $(\widetilde{\sigma}^{\circ}_{m_k})_{k \geq 1}$, where $(\widetilde{\sigma}^{\circ}_{m_k})_{k \geq 1}$ is a shift of $({\sigma}^{\circ}_{m_k})_{k \geq 1}$ over $\bar{\Omega}$ from $\textbf{i}$ to $\textbf{j}$. First we observe that for every $\omega \in \bar{\Omega}$, the induced belief under the shifted sequence $(\widetilde{\sigma}^{\circ}_{m_k})_{k \geq 1}$ differs from the original sequence $({\sigma}^{\circ}_{m_k})_{k \geq 1}$ only at the co-ordinates $\textbf{i}$ and $\textbf{j}$ in the sense that for all $\omega \in \bar{\Omega}$, we have $\widetilde{\sigma}^{\circ,k}_{m_k}(\omega)=\sigma^{\circ,k}_{m_k}(\omega)$ for all $k \in S \setminus \{\textbf{i}(\omega), \textbf{j}(\omega)\}$. Therefore, in order to verify Condition C2, it is enough to bound the difference in the entropy function over the strategies $\textbf{i}(\omega)$ and $\textbf{j}(\omega)$. Thus, by definition of Tsallis entropy, we have
		\begin{align}
			|\textbf{v}^{\textbf{t}}(\widetilde{\sigma}^{\circ}_{m_k}(\omega)-\textbf{v}^{\textbf{t}}(\sigma^{\circ}_{m_k}(\omega))|& \leq \frac{1}{q(1-q)} \Big [ \underbrace{|\widetilde{\sigma}^{\circ,\textbf{j}(\omega)}_{m_k}(\omega)-{\sigma}^{\circ,\textbf{j}(\omega)}_{m_k}(\omega)|}_{=:T^k_1(\omega)}+\underbrace{|\widetilde{\sigma}^{\circ,\textbf{j}(\omega)}_{m_k}(\omega)^q-{\sigma}^{\circ,\textbf{j}(\omega)}_{m_k}(\omega)^q|}_{=:T^k_2(\omega)}\Big] \nonumber \\
			& \hspace{2cm}+\frac{1}{q(1-q)}\Big[ |\widetilde{\sigma}^{\circ,\textbf{i}(\omega)}_{m_k}(\omega)-{\sigma}^{\circ,\textbf{i}(\omega)}_{m_k}(\omega)|+\underbrace{|\widetilde{\sigma}^{\circ,\textbf{i}(\omega)}_{m_k}(\omega)^q-{\sigma}^{\circ,\textbf{i}(\omega)}_{m_k}(\omega)^q|}_{=:T^k_3(\omega)}\Big].
		\end{align}
		
		We bound the terms $T^k_1(\omega), T^k_2(\omega)$ and $T^k_3(\omega)$ separately. Note that by the very definition of $\widetilde{\sigma}^{\circ}_{m_k}$ (via Definition \ref{DefShift}) that, 
		\begin{equation}\label{IneqT1}
			T^k_1(\omega)=|\widetilde{\sigma}^{\circ,\textbf{j}(\omega)}_{m_k}(\omega)-{\sigma}^{\circ,\textbf{j}}_{m_k}(\omega)|= |\widetilde{\sigma}^{\circ,\textbf{i}(\omega)}_{m_k}(\omega)-\sigma^{\circ,\textbf{i}(\omega)}_{m_k}(\omega)|.
		\end{equation}
		
		In order to bound the term $T^k_2(\omega)$, we first observe that $\widetilde{\sigma}^{\circ,\textbf{j}(\omega)}_{m_k}(\omega)$ can be expressed as
		\begin{equation}
			\widetilde{\sigma}^{\circ,\textbf{j}(\omega)}_{m_k}(\omega)=\delta^{\textbf{i}(\omega)\textbf{j}(\omega)}_{m_k}(\omega)\sigma^{\circ,\textbf{j}(\omega)}_{m_k}(\omega),
		\end{equation}
		where $\delta^{\textbf{i}(\omega)\textbf{j}(\omega)}_{m_k}(\omega):=1+\frac{\sigma^{\circ,\textbf{i}(\omega)}_{m_k}(\omega)-\widetilde{\sigma}^{\circ,\textbf{i}(\omega)}_{m_k}(\omega)}{\sigma^{\circ,\textbf{j}(\omega)}_{m_k}(\omega)}$. 
		Since by definition, $\delta^{\textbf{i}(\omega)\textbf{j}(\omega)}_{m_k}(\omega)>1$, and $0 < q <1$, we must have $\delta^{\textbf{i}(\omega)\textbf{j}(\omega)}_{m_k}(\omega)^q-1<\delta^{\textbf{i}(\omega)\textbf{j}(\omega)}_{m_k}(\omega)-1$. Thus we arrive at
		\begin{align*}
			T^k_2(\omega)&=|\widetilde{\sigma}^{\circ,\textbf{j}(\omega)}_{m_k}(\omega)^q-{\sigma}^{\circ,\textbf{j}(\omega)}_{m_k}(\omega)^q|\\
			&=|(\delta^{\textbf{i}(\omega)\textbf{j}(\omega)}_{m_k}(\omega)^q-1)\sigma^{\circ,\textbf{j}(\omega)}_{m_k}(\omega)^q|\\
			& \leq |(\delta^{\textbf{i}(\omega)\textbf{j}(\omega)}_{m_k}(\omega)-1)\sigma^{\circ,\textbf{j}(\omega)}_{m_k}(\omega)^q|\\
			&= \frac{\sigma^{\circ,\textbf{i}(\omega)}_{m_k}(\omega)-\widetilde{\sigma}^{\circ,\textbf{i}(\omega)}_{m_k}(\omega)}{\sigma^{\circ,\textbf{j}(\omega)}_{m_k}(\omega)^{1-q}}. 
		\end{align*}
		
		Thus, in order to bond the term $T^k_2(\omega)$, we need to lower bound the term $\sigma^{\circ,\textbf{j}(\omega)}_{m_k}(\omega)^{1-q}$. Note that since $\sigma^{\circ}_{m_k}(\omega)$ is a regularized equilibrium of the sliced game $\mathcal{G}_{\omega}$, we have that
		\begin{equation}
			\sigma^{\circ,\textbf{j}(\omega)}_{m_k}(\omega)=\Big[\frac{\epsilon_{m_k}}{1-q}\Big]^{1/(1-q)}\frac{1}{(\theta^{\textbf{t}}_{\epsilon}(\sigma^{\circ}_{m_k},\omega)-\mathcal{G}^{\textbf{j}(\omega)}_{\omega}(\sigma^{\circ}_{m_k}))^{1/(1-q)}}
		\end{equation} 
		Thus, letting $\eta(\omega) \downarrow 0$ in (\ref{Contra1}), we have that $\sigma^{\circ,\textbf{j}(\omega)}_{m_k}(\omega) \geq \sigma^{\circ,\textbf{i}(\omega)}_{m_k}(\omega)$ for all $k$ sufficiently large (depending on $\omega$). As a result, we have that $\sigma^{\circ,\textbf{j}(\omega)}_{m_k}(\omega)^{q-1} \leq \sigma^{\circ,\textbf{i}(\omega)}_{m_k}(\omega)^{q-1}$ for sufficiently large $k$ (depending on $\omega$). Note that since $\lim_{k \to \infty}\sigma^{\circ,\textbf{i}(\omega)}_{m_k}(\omega) > 0$, there exists $c_{\textbf{i}(\omega)}(\omega)>0$ such that $\sigma^{\circ,\textbf{i}(\omega)}_{m_k}(\omega) \geq \frac{c_{\textbf{i}(\omega)}(\omega)}{2}$ for sufficiently large $k$ (depending on $\omega$). This, in conjunction with the fact that $\sigma^{\circ,\textbf{j}(\omega)}_{m_k}(\omega) \geq \sigma^{\circ,\textbf{i}(\omega)}_{m_k}(\omega)$ for all $k$ sufficiently large (depending on $\omega$) implies that $\sigma^{\circ,\textbf{j}(\omega)}_{m_k}(\omega) \geq \frac{c_{\textbf{i}(\omega)}(\omega)}{2}$ for all $k$ sufficiently large (depending on $\omega$). Combining all the above observations, we arrive at
		\begin{equation}\label{IneqT2}
			T^k_2(\omega) \leq \frac{2^{1-q}}{c_{\textbf{i}(\omega)}(\omega)^{1-q}}(\sigma^{\circ,\textbf{i}(\omega)}_{m_k}(\omega)-\widetilde{\sigma}^{\circ,\textbf{i}(\omega)}_{m_k}(\omega)).
		\end{equation}

		Finally, we proceed to bound the term $T_3$. Recall that $\sigma^{\circ,\textbf{i}(\omega)}_{m_k}(\omega) \geq \frac{c_{\textbf{i}(\omega)}(\omega)}{2}$ for sufficiently large $k$ (depending on $\omega$). Now, by an application of the mean-value theorem, we have 
		\begin{align*}\label{IneqT3}
			T^k_3(\omega)&=|\widetilde{\sigma}^{\circ,\textbf{i}(\omega)}_{m_k}(\omega)^q-{\sigma}^{\circ,\textbf{i}(\omega)}_{m_k}(\omega)^q|\\
			&\leq \frac{q 2^{1-q}}{c_{\textbf{i}(\omega)}(\omega)^{1-q}}|\widetilde{\sigma}^{\circ,\textbf{i}(\omega)}_{m_k}(\omega)-{\sigma}^{\circ,\textbf{i}(\omega)}_{m_k}(\omega)|.
		\end{align*}
		
		Combining all the inequalities obtained above in (\ref{IneqT1}), (\ref{IneqT2}) and (\ref{IneqT3}) we have for sufficiently large $k$ that,
		\begin{align*}
			|\textbf{v}^{\textbf{t}}(\widetilde{\sigma}^{\circ}_{m_k}(\omega))-\textbf{v}^{\textbf{t}}(\sigma^{\circ}_{m_k}(\omega))|& \leq \Big(2+\frac{2^{1-q}}{c_{\textbf{i}(\omega)}(\omega)^{1-q}}+\frac{q 2^{1-q}}{c_{\textbf{i}(\omega)}(\omega)^{1-q}}\Big)|\widetilde{\sigma}^{\circ,\textbf{i}(\omega)}_{m_k}(\omega)-{\sigma}^{\circ,\textbf{i}(\omega)}_{m_k}(\omega)| \\
			&= \Big(\frac{2c_{\textbf{i}(\omega)}(\omega)^{1-q}+2^{1-q}(1+q)}{q(1-q)c_{\textbf{i}(\omega)}(\omega)^{1-q}}\Big)|\widetilde{\sigma}^{\circ,\textbf{i}(\omega)}_{m_k}(\omega)-{\sigma}^{\circ,\textbf{i}(\omega)}_{m_k}(\omega)|.
		\end{align*}
		
		Thus setting $\kappa_{\textbf{i}(\omega)}(\omega):=\frac{2c_{\textbf{i}(\omega)}(\omega)^{1-q}+2^{1-q}(1+q)}{q(1-q)c_{\textbf{i}(\omega)}(\omega)^{1-q}}$, and observing the fact that ${\sigma}^{\circ,\textbf{i}(\omega)}_{m_k}(\omega)>\widetilde{\sigma}^{\circ,\textbf{i}(\omega)}_{m_k}(\omega)$, proves the fact that the Tsallis entropy satisfies Condition C2.
		
		\subsubsection{Burg entropy satisfies Condition C2}
		In this subsection, we show that Burg entropy (Example \ref{ExampleBurg}) satisfies Condition C2. Fix $\omega \in \Omega$, distinct $\textbf{i},\textbf{j}:\bar{\Omega} \to S$ with $\lim_{k \to \infty}\sigma^{\circ,\textbf{i}(\omega)}_{m_k}(\omega) > 0$ and $\mathcal{G}^{\textbf{j}(\omega)}(\sigma^{\circ}_{m_k},\omega)>\mathcal{G}^{\textbf{i}(\omega)}(\sigma^{\circ}_{m_k},\omega)$ for sufficiently large $k$ (depending on $\omega$). As in the case of the proof for Tsallis entropy, it is enough to bound the Burg entropy at co-ordinates $\textbf{i}(\omega)$ and $\textbf{j}(\omega)$. 
		
		We proceed similar to the case of Tsallis entropy. Note that by definition of Burg entropy, we have
		\begin{align}
			|\textbf{v}^{\textbf{b}}(\widetilde{\sigma}^{\circ}_{m_k}(\omega))-\textbf{v}^{\textbf{b}}(\sigma^{\circ}_{m_k}(\omega))|& \leq   \underbrace{|\log \widetilde{\sigma}^{\circ,\textbf{j}(\omega)}_{m_k}(\omega)-\log{\sigma}^{\circ,\textbf{j}(\omega)}_{m_k}(\omega)|}_{=:U^k_1(\omega)}+\underbrace{|\log\widetilde{\sigma}^{\circ,\textbf{i}(\omega)}_{m_k}(\omega)-\log{\sigma}^{\circ,\textbf{i}(\omega)}_{m_k}(\omega)|}_{=:U^k_2(\omega)}.
		\end{align}
		
		We bound the terms $U^k_1(\omega), U^k_2(\omega)$ separately. We first observe that $\widetilde{\sigma}^{\circ,\textbf{j}(\omega)}_{m_k}(\omega)$ can be expressed as
		\begin{equation}
			\widetilde{\sigma}^{\circ,\textbf{j}(\omega)}_{m_k}(\omega)=\delta^{\textbf{i}(\omega)\textbf{j}(\omega)}_{m_k}(\omega)\sigma^{\circ,\textbf{j}(\omega)}_{m_k}(\omega),
		\end{equation}
		where $\delta^{\textbf{i}(\omega)\textbf{j}(\omega)}_{m_k}(\omega):=1+\frac{\sigma^{\circ,\textbf{i}(\omega)}_{m_k}(\omega)-\widetilde{\sigma}^{\circ,\textbf{i}(\omega)}_{m_k}(\omega)}{\sigma^{\circ,\textbf{j}(\omega)}_{m_k}(\omega)}$.
		Since $\log(1+x)<x$ for all $x>0$, we have that $\log \delta^{\textbf{i}(\omega)\textbf{j}(\omega)}_{m_k}(\omega) < \frac{\sigma^{\circ,\textbf{i}(\omega)}_{m_k}(\omega)-\widetilde{\sigma}^{\circ,\textbf{i}(\omega)}_{m_k}(\omega)}{\sigma^{\circ,\textbf{j}(\omega)}_{m_k}(\omega)}$. Now, by definition of Burg entropy, we have
		\begin{align*}
			U^k_1(\omega)&=|\log \widetilde{\sigma}^{\circ,\textbf{j}(\omega)}_{m_k}(\omega)-\log \sigma^{\circ,\textbf{j}(\omega)}_{m_k}(\omega)|\\
			&=|\log \big[\delta^{\textbf{i}(\omega)\textbf{j}(\omega)}_{m_k}(\omega){\sigma}^{\circ,\textbf{j}(\omega)}_{m_k}(\omega)\big]-\log \sigma^{\circ, \textbf{j}(\omega)}_{m_k}(\omega)|\\
			&=|\log\delta^{\textbf{i}(\omega)\textbf{j}(\omega)}_{m_k}(\omega)|\\
			&<\frac{\sigma^{\circ,\textbf{i}(\omega)}_{m_k}(\omega)-\widetilde{\sigma}^{\circ,\textbf{i}(\omega)}_{m_k}(\omega)}{\sigma^{\circ,\textbf{j}(\omega)}_{m_k}(\omega)}
		\end{align*}	
		
		Thus, similar to Tsallis entropy, we need to lower bound the term $\sigma^{\circ,\textbf{j}(\omega)}_{m_k}(\omega)$. Note that since $\sigma^{\circ}_{m_k}(\omega)$ is a regularized equilibrium of the sliced game $\mathcal{G}_{\omega}$, we have that
		\begin{equation}
			\sigma^{\circ,\textbf{j}}_{m_k}(\omega)=\frac{\epsilon_{m_k}}{(\theta^{\textbf{b}}_{\epsilon}(\sigma^{\circ}_{m_k},\omega)-\mathcal{G}^{\textbf{j}(\omega)}_{\omega}(\sigma^{\circ}_{m_k}))}.
		\end{equation} 
		Thus, letting $\eta(\omega) \downarrow 0$ in (\ref{Contra1}), we have that $\sigma^{\circ,\textbf{j}(\omega)}_{m_k}(\omega) \geq \sigma^{\circ,\textbf{i}(\omega)}_{m_k}(\omega)$ for all $k$ sufficiently large (depending on $\omega$). Note that since $\lim_{k \to \infty}\sigma^{\circ,\textbf{i}(\omega)}_{m_k}(\omega) > 0$, there exists $\widehat{c}_{\textbf{i}(\omega)}(\omega)>0$ such that $\sigma^{\circ,\textbf{i}(\omega)}_{m_k}(\omega) \geq \frac{\widehat{c}_{\textbf{i}(\omega)}(\omega)}{2}$ for sufficiently large $k$ (depending on $\omega$). This, in conjunction with the fact that $\sigma^{\circ,\textbf{j}(\omega)}_{m_k}(\omega) \geq \sigma^{\circ,\textbf{i}(\omega)}_{m_k}(\omega)$ for all $k$ sufficiently large (depending on $\omega$) implies that $\sigma^{\circ,\textbf{j}(\omega)}_{m_k}(\omega) \geq \frac{\widehat{c}_{\textbf{i}(\omega)}(\omega)}{2}$ for all $k$ sufficiently large (depending on $\omega$). Combining the above observations, we arrive at
		\begin{align}\label{IneqU1}
			U^k_1(\omega)& \leq \frac{2}{\widehat{c}_{\textbf{i}(\omega)}(\omega)}(\sigma^{\circ,\textbf{i}(\omega)}_{m_k}(\omega)-\widetilde{\sigma}^{\circ,\textbf{i}(\omega)}_{m_k}(\omega)).
		\end{align}

		We now proceed to bound the term $U^k_2(\omega)$. Notice that the mapping $x \mapsto \log x$ is Lipschitz on $[a,\infty)$, where $a>0$. The fact that $\sigma^{\circ,\textbf{i}(\omega)}_{m_k}(\omega)$ is bounded away from $0$, implies that the shifted sequence of Bayesian strategies can be chosen in such a way that $\widetilde{\sigma}^{\circ,\textbf{i}(\omega)}_{m_k}(\omega)$ is bounded away from $0$. More precisely, we can construct a shifted sequence of Bayesian strategies such that $\widetilde{\sigma}^{\circ,\textbf{i}(\omega)}_{m_k}(\omega) \geq \frac{\widehat{c}_{\textbf{i}(\omega)}(\omega)}{4}$. Thus, by an application of mean value theorem, we have 
		\begin{align}\label{Burg2}
			U^k_2(\omega)&=|\log \widetilde{\sigma}^{\circ,\textbf{i}(\omega)}_{m_k}(\omega)-\log \sigma^{\circ,\textbf{i}(\omega)}_{m_k}(\omega)| \nonumber\\
			& \leq \frac{4}{\widehat{c}_{\textbf{i}(\omega)}(\omega)}(\sigma^{\circ,\textbf{i}(\omega)}_{m_k}(\omega)-\widetilde{\sigma}^{\circ,\textbf{i}(\omega)}_{m_k}(\omega)).
		\end{align}
		
		Therefore, combining (\ref{IneqU1}) and (\ref{Burg2}), we arrive at
		\begin{align*}
			|\textbf{v}^{\textbf{b}}(\widetilde{\sigma}^{\circ}_{m_k}(\omega))-\textbf{v}^{\textbf{b}}(\sigma^{\circ}_{m_k}(\omega))|& \leq U^k_1(\omega)+U^k_2(\omega)\\
			& \Big(\frac{2}{\widehat{c}_{\textbf{i}(\omega)}(\omega)}+\frac{4}{\widehat{c}_i(\omega)}\Big)(\sigma^{\circ,\textbf{i}(\omega)}_{m_k}(\omega)-\widetilde{\sigma}^{\circ,\textbf{i}(\omega)}_{m_k}(\omega)) \\
			&\leq \frac{6}{\widehat{c}_{\textbf{i}(\omega)}(\omega)}(\sigma^{\circ,\textbf{i}(\omega)}_{m_k}(\omega)-\widetilde{\sigma}^{\circ,\textbf{i}(\omega)}_{m_k}(\omega)).
		\end{align*}
		Thus, setting $\widehat{\kappa}_{\textbf{i}(\omega)}(\omega):=\frac{6}{\widehat{c}_{\textbf{i}(\omega)}(\omega)}$ and observing the fact that ${\sigma}^{\circ,\textbf{i}(\omega)}_{m_k}(\omega)>\widetilde{\sigma}^{\circ,\textbf{i}(\omega)}_{m_k}(\omega)$, proves the fact that the Burg entropy satisfies Condition C2.

		\subsection{{Proof of Theorem \ref{ThmSolnExistence}}}\label{AppThmSolnExistence}
		We use the infinite dimensional Picard-Lindeloff Theorem (see \cite{zeidler1986nonlinear} for details) to prove Theorem \ref{ThmSolnExistence}.
		\begin{theorem}[\textbf{Picard-Lindel\"off Theorem}, \cite{zeidler1986nonlinear}]
			\label{pl}
			Let $\mathbb{Y}$ be a Banach space. Let $y_0 \in \mathbb{Y}, t_0 \in \mathbb{R}$, and $$\mathcal{Q}_{b}:=\{(t,y) \in \mathbb{R} \times \mathbb{Y} \mid \hspace{1mm} |t-t_0| \leq a \mbox{ and } \|y-y_0\| \leq b\}$$ for fixed $a,b>0$. Suppose $f:\mathcal{Q}_{b} \to \mathbb{Y}$ is continuous and that $$\|f(t,x)-f(t,y)\| \leq L\|x-y\| \text{ for all } (t,x),(t,y) \in \mathcal{Q}_{b}, \mbox{ and}$$
			$$\|f(t,x)\|<M \text{ for all } (t,x) \in \mathcal{Q}_{b}$$ where $L \geq 0$ and $M>0$ are fixed. Choose $c$ such that $0<c<a$ and $Mc<b$. Then the initial value problem $$\dot{x}(t)=f(t,x(t)), \hspace{1cm} x(t_0)=y_0,$$ where $x:[t_0-c,t_0+c] \to \mathbb{Y}$, has exactly one continuously differentiable solution on the interval $[t_0-c,t_0+c]$.
		\end{theorem}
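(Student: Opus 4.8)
The plan is to recast the initial value problem as a fixed-point equation for a Picard-type integral operator on a complete metric space and then invoke the Banach contraction principle in its iterated form. First I would note that, because $f$ is continuous and $\mathcal{Y}$ is a Banach space, a map $x\colon[t_0-c,t_0+c]\to\mathcal{Y}$ is a continuously differentiable solution of the problem $\dot{x}(t)=f(t,x(t))$, $x(t_0)=y_0$ if and only if it is a \emph{continuous} solution of the integral equation
\[
x(t)=y_0+\int_{t_0}^{t}f(s,x(s))\,ds .
\]
This equivalence is the fundamental theorem of calculus for Banach-space-valued (Bochner) integrals: the right-hand side is differentiable in $t$ with derivative $f(t,x(t))$ precisely because the integrand is continuous. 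I would then fix the ambient space: let $\mathcal{C}$ be the Banach space of continuous maps $x\colon[t_0-c,t_0+c]\to\mathcal{Y}$ under $\|x\|_\infty:=\sup_{|t-t_0|\le c}\|x(t)\|$, and let $B:=\{x\in\mathcal{C}:\|x(t)-y_0\|\le b\ \text{for all }t\}$. Being a closed subset of a Banach space, $B$ is a complete metric space.

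Next I would define the Picard operator $T\colon B\to\mathcal{C}$ by $(Tx)(t):=y_0+\int_{t_0}^{t}f(s,x(s))\,ds$, noting that $(s,x(s))\in\mathcal{Q}_b$ for every $x\in B$, so the integrand is legitimate. The first substantive check is that $T$ maps $B$ into itself. For $x\in B$, the uniform bound $\|f\|<M$ on $\mathcal{Q}_b$ gives
\[
\|(Tx)(t)-y_0\|\le M\,|t-t_0|\le Mc<b,
\]
where the final inequality is exactly the hypothesis $Mc<b$; hence $Tx\in B$.

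The key step, and the main obstacle, is the contraction estimate, since the hypotheses guarantee only $0<c<a$ and $Mc<b$, not $Lc<1$. A single application of $T$ yields, via the Lipschitz condition, $\|(Tx)(t)-(Ty)(t)\|\le L\,|t-t_0|\,\|x-y\|_\infty$, so $T$ itself need not be a contraction. To get around this I would iterate: an induction on $n$, inserting the previous bound under the integral sign at each stage, produces
\[
\|(T^n x)(t)-(T^n y)(t)\|\le\frac{(L\,|t-t_0|)^n}{n!}\,\|x-y\|_\infty\le\frac{(Lc)^n}{n!}\,\|x-y\|_\infty ,
\]
uniformly in $t$. Since $(Lc)^n/n!\to 0$, some iterate $T^{n_0}$ is a strict contraction on $B$. (Equivalently one could replace $\|\cdot\|_\infty$ by a Bielecki weighted norm $\sup_t e^{-2L|t-t_0|}\|x(t)\|$ to make $T$ itself a contraction; I would present the iterate version as it is the cleanest here.)

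Finally I would apply the generalized Banach fixed-point theorem: if some iterate of a self-map $T$ of a complete metric space is a contraction, then $T$ has a unique fixed point. The resulting unique fixed point $x^{\ast}\in B$ is, by construction, the unique continuous solution of the integral equation, and hence by the equivalence established in the first step it is the unique continuously differentiable solution of the initial value problem on $[t_0-c,t_0+c]$. For $t<t_0$ all displayed estimates are read with $\int_{t_0}^{t}=-\int_{t}^{t_0}$ and $|t-t_0|$ in place of $t-t_0$, so no separate argument is needed on the left of $t_0$. The only ingredients beyond the finite-dimensional theory are the Bochner integral and its fundamental theorem of calculus, both of which are standard.
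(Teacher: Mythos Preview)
Your proof is correct and is the standard textbook argument for the infinite-dimensional Picard--Lindel\"off theorem. Note, however, that the paper does not supply its own proof of this statement: Theorem~\ref{pl} is quoted from \cite{zeidler1986nonlinear} as a black box and then applied to establish Theorem~\ref{ThmSolnExistence}, so there is no in-paper proof to compare against. Your argument---recasting the IVP as a fixed-point problem for the Picard integral operator on a closed ball in $\mathcal{C}([t_0-c,t_0+c];\mathcal{Y})$, using $Mc<b$ for self-mapping, and passing to an iterate $T^{n_0}$ (or a Bielecki norm) to obtain a contraction since $Lc<1$ is not assumed---is precisely the approach taken in Zeidler's text, so in that sense you have reproduced the cited proof.
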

		
		Now we proceed with the proof of Theorem \ref{ThmSolnExistence}. To prove Theorem \ref{ThmSolnExistence}, it is by Theorem \ref{pl} enough to show that the mapping $\sigma \mapsto {\beta}_{\epsilon}(\sigma)$ is Lipschitz continuous with respect to the strong topology on $\Sigma$. We show this in the following lemma.
		\begin{lemma}\label{Lip}
			The mapping $\sigma \mapsto {\beta}_{\epsilon}(\sigma)$ is Lipschitz continuous with respect to the strong norm on $\Sigma$.
		\end{lemma}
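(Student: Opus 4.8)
The plan is to reduce the statement to a pointwise estimate on $\Theta$ that is uniform in the type, and then integrate. The key structural fact, already exploited in the proof of Proposition~\ref{Prop}, is the closed-form expression for the regularized Bayesian best response: writing $v_\epsilon := \epsilon v$ extended by $+\infty$ outside $\Delta(S)$ and letting $v^{*}_{\epsilon}$ denote its Fenchel conjugate on $\mathbb{R}^{n}$, one has $\bm{\beta}_{\epsilon}[\sigma](\theta) = \nabla v^{*}_{\epsilon}(\mathcal{G}(\sigma,\theta))$ for every $\sigma \in \Sigma$ and $\bm{\xi}$-a.e.\ $\theta$. Because $v$ is $\bm{\gamma}$-strongly convex on $\Delta(S)^{\circ}$, the function $v_{\epsilon}$ is $\epsilon\bm{\gamma}$-strongly convex, and by the classical duality between strong convexity of a function and Lipschitz smoothness of its conjugate (\cite{rockafellar1970convex}), the gradient $\nabla v^{*}_{\epsilon}$ is globally $\tfrac{1}{\epsilon\bm{\gamma}}$-Lipschitz on $\mathbb{R}^{n}$.

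With this in hand the argument is short. Fix $\sigma,\rho \in \Sigma$. Using the strong Lipschitz hypothesis on $\mathcal{G}$ with constant $\kappa$ together with the Lipschitz bound on $\nabla v^{*}_{\epsilon}$, I would obtain, for $\bm{\xi}$-a.e.\ $\theta \in \Theta$,
\begin{align*}
	\|\bm{\beta}_{\epsilon}[\sigma](\theta) - \bm{\beta}_{\epsilon}[\rho](\theta)\|
	&= \|\nabla v^{*}_{\epsilon}(\mathcal{G}(\sigma,\theta)) - \nabla v^{*}_{\epsilon}(\mathcal{G}(\rho,\theta))\| \\
	&\leq \tfrac{1}{\epsilon\bm{\gamma}}\|\mathcal{G}(\sigma,\theta) - \mathcal{G}(\rho,\theta)\|
	\leq \tfrac{\kappa}{\epsilon\bm{\gamma}}\vertiii{\sigma-\rho}.
\end{align*}
Integrating this pointwise inequality against $\bm{\xi}$ over $\Theta$ and using that $\bm{\xi}$ is a probability measure gives $\vertiii{\bm{\beta}_{\epsilon}(\sigma) - \bm{\beta}_{\epsilon}(\rho)} = \int_{\Theta}\|\bm{\beta}_{\epsilon}[\sigma](\theta) - \bm{\beta}_{\epsilon}[\rho](\theta)\|\,\bm{\xi}(d\theta) \leq \tfrac{\kappa}{\epsilon\bm{\gamma}}\vertiii{\sigma-\rho}$, so $\bm{\beta}_{\epsilon}$ is Lipschitz with constant $\kappa/(\epsilon\bm{\gamma})$ in the strong norm. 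If only a local Lipschitz bound on $\mathcal{G}$ is assumed, the same computation runs verbatim once $\sigma,\rho$ are restricted to a fixed strong-norm ball, which is exactly the form needed to invoke Theorem~\ref{pl}.

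The only nontrivial point is the first paragraph: justifying the identity $\bm{\beta}_{\epsilon}[\sigma](\theta) = \nabla v^{*}_{\epsilon}(\mathcal{G}(\sigma,\theta))$ and the $\tfrac{1}{\epsilon\bm{\gamma}}$-Lipschitz estimate on $\nabla v^{*}_{\epsilon}$. The identity is Fenchel's relation between the maximizer of $\langle \bm{y},\mathcal{G}(\sigma,\theta)\rangle - \epsilon v(\bm{y})$ over $\Delta(S)$ and the gradient of the conjugate; here the steepness requirement $\|\nabla v(\bm{x}_{n})\| \to \infty$ as $\bm{x}_{n} \to \partial\Delta(S)$ built into Definition~\ref{DefRegularizer} is precisely what forces that maximizer into $\Delta(S)^{\circ}$, so that $\nabla v$, and hence the Fenchel calculus, is legitimately applied. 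The Lipschitz bound on $\nabla v^{*}_{\epsilon}$ is then the standard strong-convexity/smoothness duality. Everything downstream is the elementary pointwise-to-integral passage displayed above, and no genuinely infinite-dimensional obstacle enters, precisely because the pointwise constant $\kappa/(\epsilon\bm{\gamma})$ is independent of $\theta$.
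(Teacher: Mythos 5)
Your proposal is correct and follows essentially the same route as the paper: it uses the identity $\bm{\beta}_{\epsilon}[\sigma](\theta)=\nabla v^{*}_{\epsilon}(\mathcal{G}(\sigma,\theta))$, the $(\epsilon\bm{\gamma})^{-1}$-Lipschitz property of $\nabla v^{*}_{\epsilon}$ from strong-convexity duality, and the strong Lipschitz hypothesis on $\mathcal{G}$, then integrates over $\Theta$ to obtain the constant $\kappa/(\epsilon\bm{\gamma})$, exactly as in Appendix~\ref{AppThmSolnExistence}. Your added remarks on the steepness condition justifying the Fenchel identity only make explicit what the paper leaves implicit.
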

		
		\begin{proof}
			To prove the lemma, we need to show that there exists a constant $\kappa>0$, such that 
			\begin{equation}\label{LipRequirement}
				\vertiii{{\beta}_{\epsilon}(\sigma)-{\beta}_{\epsilon}(\rho)}\leq \bar{\kappa}\|\sigma-\rho\|, \hspace{3mm} \text{ for all } \sigma,\rho \in \Sigma.
			\end{equation}
			Fix $\sigma, \rho \in \Sigma$.
			By (\ref{RBBRMap}) and the definition of strong norm, we have that 
			\begin{align*}
				\vertiii{{\beta}_{\epsilon}(\sigma)-{\beta}_{\epsilon}(\rho)}&=\int_{\Omega}\|{\beta}_{\epsilon}[\sigma](\omega)- {\beta}_{\epsilon}[\rho](\omega)\|{\xi}(d\omega)\\
				&=\int_{\Omega}\|\nabla \textbf{v}^{*}_{\epsilon}(\mathcal{G}(\sigma,\omega))-\nabla \textbf{v}^{*}_{\epsilon}(\mathcal{G}(\rho,\omega))\|{\xi}(d\omega)\\
				&\leq\dfrac{1}{\epsilon{\gamma}}\int_{\Omega}\|\mathcal{G}(\sigma,\omega)-\mathcal{G}(\rho,\omega)\|{\xi}(d\omega)\\
				&\leq \dfrac{ \kappa}{\epsilon{\gamma}}\vertiii{\sigma-\rho}.
			\end{align*}
			The second last inequality occurs since $\nabla \textbf{v}^{*}_{\epsilon}$ is $(\epsilon{\gamma})^{-1}$-Lipschitz, and the last inequality occurs since the Bayesian population game $\mathcal{G}$ is $\kappa$-strongly Lipschitz. Finally we have that
			\begin{align*}
				\vertiii{{\beta}_{\epsilon}(\sigma)-{\beta}_{\epsilon}(\rho)}
				& \leq \bar{\kappa}\vertiii{\sigma-\rho}, \hspace{3mm} \text{ for all } \sigma, \rho \in \Sigma.
			\end{align*}	
			Therefore (\ref{LipRequirement}) is satisfied with $\bar{\kappa}=\frac{\kappa}{\epsilon{\gamma}}$. This completes the proof of Lemma \ref{Lip}.
			
			We now apply Gr\"onwall's Lemma (see \cite{zeidler1986nonlinear} for details) to prove the continuity of the semiflow, that is, part (ii) of Theorem \ref{ThmSolnExistence}. We state the lemma below.
			
			\begin{lemma}[\textbf{Gr\"onwalls's Lemma}, \cite{zeidler1986nonlinear}]
				\label{gl}
				Let $u,v:[t_0-a,t_0+a] \to \mathbb{V}$ be maps into the subspace $\mathbb{V}$ of a Banach space $\mathbb{Y}$. Suppose $f:[t_0-a,t_0+a] \times \mathbb{V} \to \mathbb{Y}$ is continuous and Lipschitz continuous with respect to $u$, that is, $$\|f(t,u)-f(t,v)\|_{\mathbb{Y}} \leq L\|u-v\|_{\mathbb{V}} \text{ for all } t \in [t_0-a,t_0+a] \text{ and } u,v \in \mathbb{V} \mbox{ and fixed }L>0.$$ Then $\|u(t)-v(t)\|_{\mathbb{V}} \leq e^{L|t-t_0|}\|u(0)-v(0)\|$ for all $t \in [t_0-a,t_0+a]$.
			\end{lemma}
			
			We apply Lemma \ref{gl} with $\mathbb{Y}=\widehat{\Sigma}, \mathbb{V}=\Sigma$, and $f={{\beta}}_{\epsilon}$. By Lemma \ref{Lip}, the conditions of Lemma \ref{gl} are satisfied. Therefore, there exists $k>0$ such that for any two solutions $(\sigma_{t})_{t \geq 0}$ and $(\widetilde{\sigma}_{t})_{t \geq 0}$ to the Generalized Dynamic with initial conditions $\sigma_{0}$ and $\widetilde{\sigma}_{0}$, respectively, we have $\vertiii{\sigma_{t}-\widetilde{\sigma}_{t}} \leq e^{kt}\vertiii{\sigma_{0}-\widetilde{\sigma}_{0}}$. This establishes continuity of the semiflow of the Generalized Dynamic. This concludes the proof of Theorem \ref{ThmSolnExistence}.

			\subsection{{Proof of Lemma \ref{ThmSigmaNormCpt}}}\label{AppThmNormCpt}
			To prove part (i) of the theorem, we require the following lemma.
			\begin{lemma}[\cite{balder1994weak}]\label{LemNormCpt}
				A subset $\mathbb{K}$ of $\mathcal{L}^{1}({\mu},\mathbb{X})$ is relatively norm compact if and only if the following conditions are satisfied: 
				\begin{enumerate}[(i)]
					\item The subset $\mathbb{K}$ is relatively weakly compact.
					\vspace{-2mm}
					\item The subset $\mathbb{K}$ satisfies small Bocce oscillation.
					\vspace{-2mm}
					\item The subset $\{\text{avg}(f;A):f \in \mathbb{K}\}$ is relatively norm compact in $\mathbb{X}$, for every $A \in \mathcal{B}_{\Omega}$, such that ${\mu}(A)>0$.
				\end{enumerate}
			\end{lemma}
			Throughout the proof, we fix a compact set $\Xi \subseteq \Omega$ with $\xi(\Xi)=1$. To verify condition (i) of Lemma \ref{LemNormCpt}, note that for every $\alpha>0$ and $M \subseteq \Delta^{\circ}$, we have $\Sigma^{\alpha}_M(\Xi)$ is a subset of $\Sigma$. Therefore, the fact that $\Sigma$ is relatively weakly compact implies that $\Sigma^{\alpha}_M(\Xi)$ is also relatively weakly compact. 
			
			We now proceed to verify condition (ii) of Lemma \ref{LemNormCpt}, that is, we need to show that $\Sigma^{\alpha}_M(\Xi)$ satisfies small Bocce oscillation. To this end, fix $\eta>0$. Since $\Sigma^{\alpha}_M(\Xi)$ is a family of Bayesian strategies which are $\alpha$-Lipschitz on $\Xi$, it follows that $\Sigma^{\alpha}_M(\Xi)$ is equicontinuous on $\Xi$. Therefore, by the definition, there exists $\delta>0$, such that $\omega,\bar{\omega} \in \Xi$ and $d_{\Omega}(\omega,\bar{\omega})<\delta$ imply that 
			\begin{equation}\label{SigmaEquiCont}
				\|\sigma(\omega)-\sigma(\bar{\omega})\|<\eta, \hspace{3mm} \text{ for all } \sigma \in \Sigma^{\alpha}_M(\Xi).
			\end{equation}
			Fix $\omega \in \Xi$, and let $B(\omega;\delta)$ denote the open ball of radius $\delta$ about $\omega$. Then, for every $\sigma \in \Sigma^{\alpha}_M(\Xi)$, we have that $\|\sigma(\omega)-\sigma(\bar{\omega})\|<\eta$, for all $\bar{\omega} \in B(\omega;\delta)$. We now proceed to show that $\text{Bocce-osc}(\sigma,B(\omega;\delta))<\eta$, for every $\sigma \in \Sigma^{\alpha}_M(\Xi)$. Fix $\sigma \in \Sigma^{\alpha}_M(\Xi)$. Note that for every $\bar{\omega} \in B(\omega;\delta)$, we have $\|\sigma(\omega)-\sigma(\bar{\omega})\|<\eta$. This implies that
			\begin{align*}
				\|\sigma(\omega)-\text{avg}(\sigma,B(\omega;\delta))\|&=\Big\|\sigma(\omega)-\frac{1}{{\xi}(B(\omega;\delta))}\int_{B(\omega;\delta)}\sigma(\bar{\omega})\xi(d\bar{\omega})\Big\|\\
				&=\frac{1}{{\xi}(B(\omega;\delta))}\Big\|\int_{B(\omega;\delta)}\sigma(\omega){\xi}(d\bar{\omega})-\int_{B(\omega;\delta)}\sigma(\bar{\omega}){\xi}(d\bar{\omega})\Big\|\\
				&\leq\frac{1}{{\xi}(B(\omega;\delta))}\int_{B(\omega;\delta)}\|\sigma(\omega)-\sigma(\bar{\omega})\|{\xi}(d\bar{\omega})\\
				&<\eta.
			\end{align*}
			
			This implies $\text{Bocc-osc}(\sigma,B(\omega,\delta))<\eta$. Since $\Sigma^{\alpha}_M(\Xi)$ is equicontinuous on $\Xi$, we have from (\ref{SigmaEquiCont}), that $$\text{Bocce-osc}(\sigma;B(\omega;\delta))<\eta, \hspace{3mm} \text{ for all } \sigma \in\Sigma^{\alpha}_M(\Xi).$$
			
			Note that $(B(\omega;\delta))_{\omega \in \Omega}$ is an open cover of $\Omega$, and hence an open cover of $\Xi$. Since $\Xi$ is compact, it follows that there exist $\omega_1,\ldots,\omega_p \in \Xi$, such that $\Xi \subseteq \cup_{1 \leq i \leq p}B(\omega_i;\delta)$. Suppose that the collection $(B(\omega_i;\delta))_{1 \leq i \leq p}$ is disjoint. Then, $$\{B(\omega_i;\delta):1 \leq i \leq p\}\cup(\Omega\setminus\Xi)$$ forms a finite measurable partition of $\Omega$. However, if it happens to be the case that the collection $(B(\omega_i;\delta))_{1 \leq i \leq p}$ is not disjoint, we disjointify the collection and extract a collection $(G(\omega_i;\delta))_{1 \leq i \leq p}$ such that $\cup_{1 \leq i \leq p}G(\omega;\delta)=\cup_{1 \leq i \leq p}B(\omega_i;\delta)$ and that $G(\omega_i;\delta) \cap G(\omega_j;\delta)=\emptyset$ for all $i \neq j$. In such a case, we have that $$\{G(\omega_i;\delta):1 \leq i \leq p\}\cup(\Omega\setminus\Xi)$$ forms a measurable partition of $\Omega$. The fact that the elements of the new collection $(G(\omega_i;\delta))_{1 \leq i \leq p}$ are measurable subsets follows since both open sets and closed sets are measurable, and the Borel sigma-algebra is closed under intersection of finitely many open and closed sets. Therefore we finally have that $$\sum_{1 \leq i \leq p}{\xi}(G(\omega_i;\delta))\text{Bocc-osc}(\sigma;G(\omega_i;\delta))<\eta.$$ This proves that $\Sigma^{\alpha}_M(\Xi)$ has small Bocce oscillation thereby verifying condition (ii) of Lemma \ref{LemNormCpt}.
			
			Finally we verify condition (iii) of Lemma \ref{LemNormCpt}. Fix $A \in \mathcal{B}_{\Omega}$, such that ${\xi}(A)>0$. By the definition of Bayesian strategies, we have that $\{\text{avg}(\sigma;A):\sigma\in\Sigma^{\alpha}_M(\Xi)\}\subseteq\Delta$. Since $\Delta$ is compact in the Euclidean topology, we have that $\{\text{avg}(\sigma;A):\sigma\in\Sigma^{\alpha}_M(\Xi)\}$ is relatively norm compact in $\mathbb{R}^{n}$. Since $A \in \mathcal{B}_{\Omega}$ is arbitrary, Condition (iii) of Lemma \ref{LemNormCpt} is satisfied. This concludes the proof of part (i) of Theorem \ref{ThmSigmaNormCpt}.
			
			We now proceed to prove part (ii) of Lemma \ref{ThmSigmaNormCpt}. We first observe that the game $\mathcal{G}$ is uniformly bounded in the sense that there exists $c>0$ such that for every $\sigma \in \Sigma$, $\|\mathcal{G}(\sigma,\omega)\| \leq c$ for $\xi$-a.e $\omega \in \Omega$. Let $B_{c}:=\{\textbf{u} \in \mathbb{R}^n: \|\textbf{u}\| \leq c\}$. Since the mapping $\textbf{u} \mapsto \nabla \textbf{v}^{*}_{\epsilon}(\textbf{u})$ is continuous and $B_c$ is a compact set, we have that the image of $B_c$ is a compact set. Also since the regularized best response maps payoff-vectors to the interior of $\Delta$, we have $\nabla \textbf{v}^{*}_{\epsilon}(\textbf{u}) \in \Delta^{\circ}$ for every $\textbf{u} \in \mathbb{R}^n$, we have that the image of $B_c$ is a compact subset of $\Delta^{\circ}$. Thus, for a given $c>0$, consider the set $B_c$ and define $M_c$ to be the image of $B_c$ under the map $\textbf{u} \mapsto \nabla \textbf{v}^{*}_{\epsilon}(\textbf{u})$. Therefore, we have that $\beta_{\epsilon}[\sigma] \in \Sigma_{M_c}$ whenever $\sigma \in \Sigma$. This implies, in particular, that $\beta_{\epsilon}[\sigma] \in \Sigma_{M_c}$ whenever $\sigma \in \Sigma_{M_c}$. Also, by assumption of lemma, we have for every $\sigma \in \Sigma$, the map $\omega \mapsto \mathcal{G}(\sigma,\omega)$ is $\lambda$-Lipschitz for $\xi$-a.e $\omega \in \Xi$. As a result, fixing $\omega,\bar{\omega} \in \Xi$, we have
			\begin{align*}
				\|{\beta}_{\epsilon}[\sigma](\omega)-{\beta}_{\epsilon}[\sigma](\bar{\omega})\|&=\|\nabla \textbf{v}^{*}_{\epsilon}(\mathcal{G}(\sigma,\omega))-\nabla \textbf{v}^{*}_{\epsilon}(\mathcal{G}(\sigma,\bar{\omega}))\|\\
				&\leq \dfrac{1}{\epsilon{\gamma}}\|\mathcal{G}(\sigma,{\omega})-\mathcal{G}(\sigma,\bar{\omega})\|\\
				&\leq \dfrac{\lambda}{\epsilon{\gamma}}{d}_{\Omega}(\omega,\bar{\omega})\\
				&\leq \alpha {d}_{\Omega}(\omega,\bar{\omega}).
			\end{align*}
			
			Combining the above observations, we have that $\beta_{\epsilon}[\sigma] \in \Sigma^{\alpha}_{M_c}(\Xi)$ whenever $\sigma \in \Sigma$. This again, in particular, implies that $\beta_{\epsilon}[\sigma] \in \Sigma^{\alpha}_{M_c}(\Xi)$ whenever $\sigma \in \Sigma^{\alpha}_{M_c}(\Xi)$. Thus, we have proved that given $c>0$, there exists a compact set $M_c \subseteq \Delta^{\circ}$ such that $\beta_{\epsilon}[\sigma] \in \Sigma^{\alpha}_{M_c}(\Xi)$ whenever $\sigma \in \Sigma^{\alpha}_{M_c}(\Xi)$, provided the Lipschitz coefficient of the Bayesian strategy $\alpha \geq \frac{\lambda}{\epsilon\gamma}$. Hence $\Sigma^{\alpha}_{M_c}(\Xi)$ is forward invariant under the RBBR learning dynamic. This completes the proof of part (ii) of Lemma \ref{ThmSigmaNormCpt}.
			
			\subsection{Proof of Lemma \ref{LemPositiveCorrelation}}\label{AppLemPositiveCorrelation}
			
			In order to prove the lemma, we need to show that $\int_{\Omega} \langle \widetilde{\mathcal{G}}(\sigma,\omega), \dot{\sigma}(\omega) \rangle \xi(d\omega) \geq 0$ for all $\sigma \in \Sigma$ with equality only if $\dot{\sigma}=0$. We first prove the inequality.
			We require the following lemma in this regard.
			
			\begin{lemma}\label{LemPopolnGame=GateauxDeri}
				For every $\sigma \in \Sigma^{\alpha}_M$, the following equality holds: $$\int_{\Omega}\langle \nabla^{\textbf{G}}\widetilde{\textbf{v}}_{\epsilon}[{\beta}_{\epsilon}(\sigma)](\omega),\dot{\sigma}(\omega)\rangle {\xi}(d\omega)=\int_{\Omega}\langle \mathcal{G}(\sigma,\omega), \dot{\sigma}(\omega)\rangle{\xi}(d\omega).$$
			\end{lemma}	
			\begin{proof}
				To prove the lemma, fix $\sigma \in \Sigma^{\alpha}_M$ and $\omega \in \Omega$ and consider a mapping $\Phi^{\epsilon}_{\sigma,\omega}:\Delta^{\circ} \to \mathbb{R}$ defined as
				\begin{equation}
					\Phi^{\epsilon}_{\sigma,\omega}(\textbf{y}):=\langle \textbf{y}, \mathcal{G}(\sigma,\omega) \rangle - \epsilon \textbf{v}(\textbf{y}), \hspace{3mm} \text{ for all } \textbf{y} \in \Delta^{\circ}.
				\end{equation}			
				
				Now note that by the definition of regularized Bayesian best response map and the fact that $\textbf{v}$ is strongly convex, we have that $\beta_{\epsilon}[\sigma](\omega)$ is the unique unique maximizer of $\Phi^{\epsilon}_{\sigma,\omega}$. Therefore, we must have ${d}\Phi^{\epsilon}_{\sigma,\omega}[\beta_{\epsilon}[\sigma](\omega)](\textbf{z}_0)=0$ for all $\textbf{z}_0 \in \mathbb{R}^n_0$. As a result, for $\textbf{z}_0 \in \mathbb{R}^n_0$, we have 
				\begin{align}
					0&={d}\Phi^{\epsilon}_{\sigma,\omega}[\beta_{\epsilon}[\sigma](\omega)](\textbf{z}_0) \nonumber\\
					&=\lim_{\eta \to 0}\frac{\Phi^{\epsilon}_{\sigma,\omega}(\beta_{\epsilon}[\sigma](\omega)+\eta \textbf{z}_0)-\Phi^{\epsilon}_{\sigma,\omega}(\beta_{\epsilon}[\sigma](\omega))}{\eta}\nonumber\\
					&=\langle \textbf{z}_0,\mathcal{G}(\sigma,\omega) \rangle -\lim_{\eta \to 0}\frac{ \textbf{v}_{\epsilon}(\beta_{\epsilon}[\sigma](\omega)+\eta \textbf{z}_0)-\textbf{v}_{\epsilon}(\beta_{\epsilon}[\sigma](\omega))}{\eta}\nonumber\\
					&=\langle \textbf{z}_0,\mathcal{G}(\sigma,\omega) \rangle-d\textbf{v}_{\epsilon}[\beta_{\epsilon}[\sigma](\omega)](\textbf{z}_0).  \label{eqnInLemmaA7}
				\end{align}
				Thus, we have $d\textbf{v}_{\epsilon}[\beta_{\epsilon}[\sigma](\omega)](\textbf{z}_0)=\langle \mathcal{G}(\sigma,\omega), \textbf{z}_0 \rangle$ for all $\textbf{z}_0 \in \mathbb{R}^n_0$. Fix $\sigma \in \Sigma^{\alpha}_M$ and $\sigma_0 \in \Sigma_0$. Recall from the definition of $\widetilde{\textbf{v}}$ that for fixed $\eta>0$, we have $$\widetilde{\textbf{v}}_{\epsilon}({\beta}_{\epsilon}[\sigma]+\eta\sigma_0)=\int_{\Omega}\textbf{v}_{\epsilon}({\beta}_{\epsilon}[\sigma](\omega)+\eta\sigma_0(\omega)){\xi}(d\omega),$$ and that $$\widetilde{\textbf{v}}_{\epsilon}({\beta}_{\epsilon}[\sigma])=\int_{\Omega}\textbf{v}_{\epsilon}({\beta}_{\epsilon}[\sigma](\omega)){\xi}(d\omega).$$ Since $\sigma \in \Sigma^{\alpha}_M$, we have that $\textbf{v}(\sigma(\omega))$ is uniformly bounded for ${\xi}$-a.e $\omega \in \Omega$, in which case, we have
				\begin{equation}\label{DerivativeEquality}
					{d}\widetilde{\textbf{v}}_{\epsilon}[{\beta}_{\epsilon}(\sigma)](\sigma_0)=\int_{\Omega}{d}\textbf{v}_{\epsilon}({\beta}_{\epsilon}[\sigma](\omega))(\sigma_0(\omega)){\xi}(d\omega).
				\end{equation}
				However using (\ref{eqnInLemmaA7}) and the definition of G\^ateaux derivative, we have 
				\begin{align*}
					\int_{\Omega}\nabla^{\textbf{G}}\widetilde{\textbf{v}}_{\epsilon}[{\beta}_{\epsilon}(\sigma)](\omega)(\sigma_0(\omega)){\xi}(d\omega)&=\int_{\Omega}\langle\sigma_0(\omega),\nabla^{\textbf{G}}\widetilde{\textbf{v}}_{\epsilon}[{\beta}_{\epsilon}(\sigma)](\omega) \rangle{\xi}(d\omega)\\
					&=\int_{\Omega}{d}\textbf{v}_{\epsilon}({\beta}_{\epsilon}[\sigma](\omega))(\sigma_0(\omega)){\xi}(d\omega)\\
					&=\int_{\Omega}\langle\mathcal{G}(\sigma,\omega),\sigma_0(\omega)\rangle{\xi}(d\omega).
				\end{align*}
				
				Note that the above equality holds true for arbitrary $\sigma_0 \in \Sigma_0$. Since, $\dot{\sigma}=\beta_{\epsilon}(\sigma)-\sigma \in \Sigma_0$, we have that 
				\begin{equation}
					\int_{\Omega}\nabla^{\textbf{G}}\widetilde{\textbf{v}}_{\epsilon}[{\beta}_{\epsilon}(\sigma)](\omega)(\dot{\sigma}(\omega)){\xi}(d\omega)=\int_{\Omega}\langle\mathcal{G}(\sigma,\omega),\dot{\sigma}(\omega)\rangle{\xi}(d\omega).
				\end{equation}
				
				This completes the proof of the Lemma \ref{LemPopolnGame=GateauxDeri}.
			\end{proof}
			
			We now proceed with the proof of Lemma \ref{LemPositiveCorrelation}. Note that, as a consequence of result of Lemma \ref{LemPopolnGame=GateauxDeri}, we have the following equality: 
			\begin{align*}
				\int_{\Omega}\langle\widetilde{\mathcal{G}}(\sigma,\omega),\dot{\sigma}(\omega)\rangle{\xi}(d\omega)&=\int_{\Omega}\langle\mathcal{G}(\sigma,\omega)-\nabla^{\textbf{G}}\widetilde{\textbf{v}}[\sigma](\omega),\dot{\sigma}(\omega)\rangle{\xi}(d\omega)\\
				&=\int_{\Omega}\langle\nabla^{\textbf{G}}\widetilde{\textbf{v}}[{\beta}_{\epsilon}(\sigma)](\omega)-\nabla^{\textbf{G}}\widetilde{\textbf{v}}[\sigma](\omega),\dot{\sigma}(\omega)\rangle{\xi}(d\omega).
			\end{align*}
			Therefore, in order to prove Theorem \ref{bpgc}, firstly we need to show that 
			\begin{equation}\label{reqdineq}
				\int_{\Omega}\langle\nabla^{\textbf{G}}\widetilde{\textbf{v}}[{\beta}_{\epsilon}(\sigma)](\omega)-\nabla^{\textbf{G}}\widetilde{\textbf{v}}[\sigma](\omega),\dot{\sigma}(\omega)\rangle{\xi}(d\omega)\geq 0.
			\end{equation}
			Since $\textbf{v}$ is strongly convex (and hence convex), we have that $\widetilde{\textbf{v}}$ is also convex. As a result, we have from \cite{rockafellar1970convex}, that for every $\sigma,\rho \in \Sigma$,
			\begin{align*}
				\widetilde{\textbf{v}}(\rho)&\geq\widetilde{\textbf{v}}(\sigma)+\int_{\Omega}\langle\rho-\sigma,\nabla^{\textbf{G}}\widetilde{\textbf{v}}(\sigma)\rangle(\omega){\xi}(d\omega)\\
				&=\widetilde{\textbf{v}}(\sigma)+\int_{\Omega}\langle \nabla^{\textbf{G}}\widetilde{\textbf{v}}[\sigma](\omega),\rho(\omega)-\sigma(\omega)\rangle {\xi}(d\omega).
			\end{align*}
			Putting $\rho={\beta}_{\epsilon}[\sigma]$ in the above inequality we have that 
			\begin{equation}\label{BayPotIneq1}
				\widetilde{\textbf{v}}({\beta}_{\epsilon}[\sigma])\geq\widetilde{\textbf{v}}(\sigma)+\int_{\Omega}\langle\nabla^{\textbf{G}}\widetilde{\textbf{v}}[\sigma](\omega),{\beta}_{\epsilon}[\sigma](\omega)-\sigma(\omega)\rangle{\xi}(d\omega).
			\end{equation}
			Now interchanging $\rho$ and $\sigma$, we have that
			\begin{equation}\label{BayPotIneq2}
				\widetilde{\textbf{v}}(\sigma)\geq\widetilde{\textbf{v}}({\beta}_{\epsilon}[\sigma])+\int_{\Omega}\langle\nabla^{\textbf{G}}\widetilde{\textbf{v}}({\beta}_{\epsilon}[\sigma])(\omega),\sigma(\omega)-{\beta}_{\epsilon}[\sigma](\omega)\rangle{\xi}(d\omega). 	
			\end{equation}
			Combining inequalities (\ref{BayPotIneq1}) and (\ref{BayPotIneq2}) above, we arrive at the required inequality (\ref{reqdineq}). Hence this completes the proof of Lemma \ref{LemPositiveCorrelation}.
			
			It now remains to show that equality in (\ref{reqdineq}) holds only if $\beta_{\epsilon}(\sigma)=\sigma$, that is, $\dot{\sigma}=0$. Since $\textbf{v}$ is a decomposable regularizer, there exists a $\mathcal{C}^1$-function $\theta:(0,1) \to \mathbb{R}$ such that $\textbf{v}(\textbf{x})=\sum_{1 \leq i \leq n} \theta (\textbf{x}_i)$, for all $\textbf{x} \in \Delta^{\circ}$. We first require the following lemma.
			\begin{lemma}\label{LemDerivativeBayRegu}
				Suppose that $\textbf{v}$ is a regularizer decomposable via the function $\theta:(0,1) \to \mathbb{R}$. Then for every $\sigma \in \Sigma^{\alpha}_M$, we have 
				\begin{equation*}
					\text{d}\widetilde{\textbf{v}}_{\epsilon}[\sigma](\sigma_0):=\epsilon\sum\limits_{i=1}^{n}\int_{\Omega}\theta{'}(\sigma^{i}(\omega))\sigma^{i}_0(\omega) \xi(d\omega), \hspace{3mm} \text{ for all } \sigma_0 \in \Sigma_0.
				\end{equation*}
			\end{lemma}	
			\begin{proof}
				For $\textbf{x} \in \Delta^{\circ}$ and $\textbf{z}_0 \in \mathbb{R}^n_0$, we have
				\begin{align*}
					{d}\textbf{v}_{\epsilon}[\textbf{x}](\textbf{z}_0)&=\lim_{\eta \to 0}\frac{\textbf{v}_{\epsilon}(\textbf{x}+\eta \textbf{z}_0)-\textbf{v}_{\epsilon}(\textbf{x})}{\eta}\\
					&=\epsilon\lim_{\eta \to 0}\frac{\sum_{1 \leq i \leq n}\theta(\textbf{x}_i+\eta \textbf{z}^0_i)-\sum_{1 \leq i \leq n}\theta(\textbf{x}_i)}{\eta}\\
					&=\epsilon \sum\limits_{i=1}^{n}\textbf{z}^i_0 \theta'(\textbf{x}_i).
				\end{align*}
				It now follows from (\ref{DerivativeEquality}) and the previous arguments that for $\sigma \in \Sigma^{\alpha}_M$ and $\sigma_0 \in \Sigma_0$, we have
				\begin{align*}
					{d}\widetilde{\textbf{v}}_{\epsilon}[\sigma](\sigma_0)&=\int_{\Omega}{d}\textbf{v}_{\epsilon}(\sigma(\omega))(\sigma_0(\omega))\xi(d\omega)\\
					&=\epsilon\sum\limits_{i=1}^{n} \int_{\Omega}\theta'(\sigma^{i}(\omega))\sigma^{i}_0(\omega)\xi(d\omega).
				\end{align*}
				This completes the proof of Lemma \ref{LemDerivativeBayRegu}.
			\end{proof}
			
			We now proceed with the proof of part (i) of Lemma \ref{LemPositiveCorrelation} in the context of decomposable regularizers. By Lemma \ref{LemDerivativeBayRegu}, we have that the G\^ateaux gradient $\nabla^{\textbf{G}}\widetilde{\textbf{v}}_{\epsilon}(\beta_{\epsilon}[\sigma])$ at the type $\omega$ is identified by the vector $\epsilon(\theta'(\beta^{1}_{\epsilon}[\sigma](\omega)),\ldots,\theta'(\beta^{n}_{\epsilon}[\sigma](\omega)))$. Similarly, we have that $\nabla^{\textbf{G}}\widetilde{\textbf{v}}_{\epsilon}[\sigma](\omega)=\epsilon(\theta'(\sigma^{1}(\omega)),\ldots,\theta'(\sigma^{n}(\omega)))$ for all $\omega \in \Omega$. This in particular implies that 
			\begin{align}\label{EqualityGateauxGradientAndDecomposable}
				\int_{\Omega}\langle\nabla^{\textbf{G}}\widetilde{\textbf{v}}[{\beta}_{\epsilon}(\sigma)](\omega)-\nabla^{\textbf{G}}\widetilde{\textbf{v}}[\sigma](\omega),\dot{\sigma}(\omega)\rangle{\xi}(d\omega) \nonumber\\
				&\hspace{-3cm}=\epsilon\sum\limits_{i=1}^{n}\int_{\Omega}[\theta'(\beta^i_{\epsilon}[\sigma](\omega))-\theta'(\sigma^{i}(\omega))][\beta^{i}_{\epsilon}[\sigma](\omega)-\sigma^{i}(\omega)]\xi(d\omega).
			\end{align}
			
			By assumption of the lemma, we have that the function $\theta'$ is strictly increasing on $(0,1)$.  Thus, we have that the RHS of the equality (\ref{EqualityGateauxGradientAndDecomposable}) is always non-negative for $\xi$-a.e $\omega \in \Omega$. This implies that the RHS of (\ref{EqualityGateauxGradientAndDecomposable}) equals $0$ iff $\beta_{\epsilon}[\sigma]=\sigma$, that is equality holds iff $\dot{\sigma}=0$.
			
			Finally we proceed with the proof of part (ii) of Lemma \ref{LemPositiveCorrelation}. We note  by Lemma \ref{ThmSigmaNormCpt}, that there exists a compact set $M \subseteq \Delta^{\circ}$ such that $\Sigma^{\alpha}_{M}(\Xi)$ is forward invariant under the RBBR dynamic. Thus, we can find $\delta \in (0,\frac{1}{2})^n$ such that $M \subseteq [\delta^1, 1-\delta^1] \times \cdots \times [\delta^n,1-\delta^n]$.\footnote{We clarify the reader that the compact set $M$ and $\delta \in (0,\frac{1}{2})^n$ depends on the underlying parameters stated precisely in Lemma \ref{ThmSigmaNormCpt}. We remove the dependence of the parameters for a clean presentation of the proof.} Now suppose that $\sigma \in \Sigma^{\alpha}_{M}(\Xi)$. By forward invariance, $\beta_{\epsilon}[\sigma] \in \Sigma^{\alpha}_{{M}}(\Xi)$. Also by assumption in part (ii) of the lemma, we have that $\theta'$ is either strictly increasing on the interval $[\min_{1 \leq i \leq n} \delta^i, 1- \min_{1 \leq i \leq n}\delta^{i}]$. Thus, in such a scenario, the RHS of the equality (\ref{EqualityGateauxGradientAndDecomposable}) is non-negative or non-positive for $\xi$-a.e $\omega \in \Omega$. This implies that the RHS of (\ref{EqualityGateauxGradientAndDecomposable}) equals $0$ iff $\beta_{\epsilon}[\sigma]=\sigma$, that is equality holds iff $\dot{\sigma}=0$. Thus, the RBBR learning dynamic satisfies positive correlation with respect to the virtual payoff $\widetilde{\mathcal{G}}$.

			\subsection{{Proof of Theorem \ref{bpgc}}}\label{AppSectionPotential}
			We first observe that by Theorem \ref{ThmSigmaNormCpt}, the subset $\Sigma^{\alpha}_{M}(\Xi)$ is norm compact and forward invariant under the RBBR learning dynamic. Thus, in order to prove the theorem, we need to show the Bayesian entropy adjusted potential function $\widetilde{\varphi}^{\alpha}_{M}$ increases along every solution trajectory to the regularized Bayesian best response learning dynamic which originates in $\Sigma^{\alpha}_{M}(\Xi)$.
			Note that 
			\begin{align*}
				\dot{\widetilde{\varphi}}(\sigma)&=\dot{\varphi}(\sigma)-\dot{\widetilde{\textbf{v}}}(\sigma)\\
				&=\text{D}\varphi[\sigma](\dot{\sigma})-{d}\widetilde{\textbf{v}}[\sigma](\dot{\sigma})\\
				&=\int_{\Omega}\langle \dot{\sigma},\nabla^{\textbf{F}}\varphi(\sigma)\rangle(\omega){\xi}(d\omega)-\int_{\Omega}\langle\dot{\sigma},\nabla^{\textbf{G}}\widetilde{\textbf{v}}(\sigma)\rangle(\omega){\xi}(d\omega)\\
				&=\int_{\Omega}\nabla^{\textbf{F}}\varphi[\sigma](\omega)(\dot{\sigma}(\omega)){\xi}(d\omega)-\int_{\Omega}\nabla^{\textbf{G}}\widetilde{\textbf{v}}[\sigma](\omega)(\dot{\sigma}(\omega)){\xi}(d\omega)\\
				&=\int_{\Omega}\langle\nabla^{\textbf{F}}\varphi[\sigma](\omega),\dot{\sigma}(\omega)\rangle{\xi}(d\omega)-\int_{\Omega}\langle \nabla^{\textbf{G}}\widetilde{\textbf{v}}[\sigma](\omega),\dot{\sigma}(\omega)\rangle{\xi}(d\omega)\\
				&=\int_{\Omega}\langle\mathcal{G}(\sigma,\omega),\dot{\sigma}(\omega)\rangle{\xi}(d\omega)-\int_{\Omega}\langle \nabla^{\textbf{G}}\widetilde{\textbf{v}}[\sigma](\omega),\dot{\sigma}(\omega)\rangle{\xi}(d\omega)\\
				&=\int_{\Omega}\langle\mathcal{G}(\sigma,\omega)-\nabla^{\textbf{G}}\widetilde{\textbf{v}}[\sigma](\omega),\dot{\sigma}(\omega)\rangle{\xi}(d\omega)\\
				&=\int_{\Omega}\langle\widetilde{\mathcal{G}}(\sigma,\omega),\dot{\sigma}(\omega)\rangle{\xi}(d\omega)
			\end{align*}

			Now we complete the proof by showing that the solution trajectory to the RBBR dynamic converges to the set of regularized Bayesian equilibria under the strong topology. By assumption of the theorem, it follows from Theorem \ref{ThmSigmaNormCpt} that $\Sigma^{\alpha}_{M}(\Xi)$ is norm compact and forward invariant. Also, since the vector field is Lipschitz under the norm $\vertiii{\hspace{0.5mm}.\hspace{0.5mm}}$, it follows from Theorem \ref{ThmSolnExistence} that a continuous and unique solution exists, given any initial condition $\sigma_0 \in \Sigma$. It then follows from standard results of dynamical systems theory (see \cite{bhatia2006dynamical} for details) that the omega--limit sets of the trajectory $(\sigma_{t})_{t \geq 0}$ is non--empty, compact, and connected in the norm topology. Moreover this set is contained in the set of rest points of the {RBBR} learning dynamic. Hence any element in the omega--limit set is a rest point of the {RBBR} learning dynamic and hence a regularized Bayesian equilibrium. Also since the entropy adjusted Bayesian
			potential function is increasing along every solution trajectory, any point in the omega--limit set will be a maximizer of the function. This concludes the proof of Theorem \ref{bpgc}.
			
			\subsection{{Proof of Lemma \ref{sde}}}\label{AppSectionBayesianNegative}
			The proof of the lemma follows along the lines of \cite{cheung2014pairwise}. Fix $\omega \in \Omega$,
			$\sigma \in \Sigma$, and ${\sigma}_0 \in {\Sigma_{0}}$. Since the slice map $\mathcal{G}_{\omega}:\widehat{\Sigma} \rightarrow \mathbb{R}^n$ is $\mathcal{C}^1$-Fr\'echet differentiable for ${\xi}$-a.e $\omega \in \Omega$, we have  $$\mathcal{G}_{\omega}(\sigma+\epsilon{\sigma}_0)=\mathcal{G}_{\omega}({\sigma})+\text{D}\mathcal{G}_{\omega}[\sigma](\epsilon{\sigma}_0)+\textbf{o}(\epsilon\|{\sigma}_0\|).$$ This implies that $$\mathcal{G}_{\omega}(\sigma+\epsilon{\sigma}_0)-\mathcal{G}_{\omega}({\sigma})=\text{D}\mathcal{G}_{\omega}[\sigma](\epsilon{\sigma}_0)+\textbf{o}(\epsilon\|{\sigma}_0\|).$$ Taking inner product with respect to $\epsilon\sigma_0(\omega)$ on both sides of the above equation and integrating with respect to ${\xi}$, we have $$\int_{\Omega}\langle\mathcal{G}_{\omega}(\sigma+\epsilon{\sigma}_0)-\mathcal{G}_{\omega}(\sigma),\epsilon\sigma_0(\omega)\rangle{\xi}(d\omega)=\int_{\Omega}\langle\text{D}\mathcal{G}_{\omega}[\sigma](\epsilon {\sigma}_0),\sigma_0(\omega)\rangle{\xi}(d\omega)+ \textbf{o}(\epsilon^2\|{\sigma}_0\|^2).$$ Since $\mathcal{G}$ is a negative semidefinite game, we have $$\epsilon^2\int_{\Omega}\langle\text{D}\mathcal{G}_{\omega}(\sigma)({\sigma}_0),\sigma_0(\omega)\rangle{\xi}(d\omega)+\textbf{o}(\epsilon^2\|{\sigma}_0\|^2) \leq 0.$$ Since $\epsilon$ is arbitrary, we have by dividing both sides of the above inequality by $\epsilon^2$ and allowing $\epsilon \rightarrow 0$, that $$\int_{\Omega}\langle\text{D}\mathcal{G}_{\omega}[\sigma]({\sigma}_0),\sigma_0(\omega)\rangle{\xi}(d\omega) \leq 0.$$ This proves that $\mathcal{G}$ satisfies self defeating externalities if and only if $\mathcal{G}$ is a negative semidefinite game, which concludes the proof of Lemma \ref{sde}.

			\subsection{Proof of Lemma \ref{LemBayNegSemGameUnique}}\label{AppLemBayNegSemGameUnique}
			\begin{proof}
				\cite{hofbauer2007evolution} show uniqueness of logit equilibrium for finite strategy negative semidefinite games under homogeneous population set-up. We extend this result to finite strategy negative semidefinite games under heterogeneous set-up, which we call Bayesian negative semidefinite games.	We prove the lemma along the lines of \cite{lahkar2015logit}, \cite{lahkar2022generalized}. Note that since the space $\Sigma^{\alpha}_M(\Xi)$ is forward invariant under the RBBR dynamic, it is enough to prove the uniqueness result on $\Sigma^{\alpha}_{M}(\Xi)$. Recall from Section \ref{SectionBayesianPotential} that the virtual payoff vector is such that for all $\sigma \in \Sigma^{\alpha}_{M}(\Xi)$,
				\begin{equation}
					\widetilde{\mathcal{G}}(\sigma,\omega)=\mathcal{G}(\sigma,\omega)-\nabla^{\textbf{G}}\widetilde{\textbf{v}}[\sigma](\omega), \hspace{3mm} \text{ for all } \omega \in \Omega.
				\end{equation}
				Now, for every $\widehat{\sigma}_0 \in \widehat{\Sigma}_0$ and $t \in \mathbb{R}$ such that $\sigma+t\widehat{\sigma}_0 \in \Sigma^{\alpha}_{M}(\Xi)$, we define the following quantity:
				\begin{equation}
					\lambda(\sigma,\widehat{\sigma}_0,t):=\int_{\Omega} \langle \widetilde{\mathcal{G}}(\sigma+t\widehat{\sigma}_0,\omega),\widehat{\sigma}_0(\omega)  \rangle \xi(d\omega).
				\end{equation}
				We would now like to calculate the G\^ateaux derivative of the function $\lambda(\sigma,\widehat{\sigma}_0,\cdot)$ with respect to $t$. Notice that 
				\begin{align*}
					\lambda(\sigma,\widehat{\sigma}_0,t+\eta)&=\int_{\Omega} \langle \widetilde{\mathcal{G}}(\sigma+(t+\eta)\widehat{\sigma}_0,\omega),\widehat{\sigma}_0(\omega)  \rangle \xi(d\omega)\\
					&= \int_{\Omega} [\langle \widetilde{\mathcal{G}}(\sigma+t\widehat{\sigma}_0,\omega)+d\widetilde{\mathcal{G}}_{\omega}[\sigma+t\widehat{\sigma}_0](\eta\widehat{\sigma}_0),\widehat{\sigma}_0(\omega) \rangle+ \textbf{o}(|\eta|)]\xi(d\omega)\\
					&=\lambda(\sigma,\widehat{\sigma}_0,t) +\int_{\Omega}\langle d\widetilde{\mathcal{G}}_{\omega}[\sigma+t\widehat{\sigma}_0](\eta\widehat{\sigma}_0),\widehat{\sigma}_0(\omega) \rangle \xi(d\omega) + \textbf{o}(|\eta|).
				\end{align*}
				Thus, we have that 
				\begin{equation*}
					\frac{\partial \lambda(\sigma, \widehat{\sigma}_0, t)}{\partial t}=\int_{\Omega}\langle d\widetilde{\mathcal{G}}_{\omega}[\sigma+t\widehat{\sigma}_0](\widehat{\sigma}_0),\widehat{\sigma}_0(\omega) \rangle \xi(d\omega).
				\end{equation*}
				In order for us to complete the proof of the lemma, we need to compute the right hand side of the above equation. Note that from the definition of $\widetilde{\mathcal{G}}$, we have
				\begin{align}\label{GradReguPos}
					\int_{\Omega}\langle d\widetilde{\mathcal{G}}_{\omega}[\sigma+t\widehat{\sigma}_0](\widehat{\sigma}_0),\widehat{\sigma}_0(\omega) \rangle \xi(d\omega)& = \int_{\Omega}\langle	d	{\mathcal{G}}_{\omega}[\sigma+t\widehat{\sigma}_0](\widehat{\sigma}_0),\widehat{\sigma}_0(\omega) \rangle \xi(d\omega) \nonumber\\
					& \hspace{2cm} - \int_{\Omega}\langle d\nabla^{\textbf{G}} \widetilde{\textbf{v}}[\sigma+t\widehat{\sigma}_0](\widehat{\sigma}_0),\widehat{\sigma}_0(\omega) \rangle \xi(d\omega).
				\end{align}
				
				Since the game $\mathcal{G}$ is Fr\'echet differentiable, $\mathcal{G}$ is G\^ateaux differentiable too in which case, using the fact that $\mathcal{G}$ satisfies Bayesian self-defeating externalities, we have the following inequality:
				\begin{equation*}
					\int_{\Omega}\langle d{\mathcal{G}}_{\omega}(\sigma+t\widehat{\sigma}_0)(\widehat{\sigma}_0),\widehat{\sigma}_0(\omega) \rangle \xi(d\omega)= \int_{\Omega}\langle	D{\mathcal{G}}_{\omega}(\sigma+t\widehat{\sigma}_0)(\widehat{\sigma}_0),\widehat{\sigma}_0(\omega) \rangle \xi(d\omega) \leq 0.
				\end{equation*}
				
				We now show that the second term in the RHS of (\ref{GradReguPos}) is positive. For this, we note that for every $\sigma \in \Sigma^{\alpha}_M$, the following equality holds:
				\begin{equation}
					\nabla^{\textbf{G}} \widetilde{\textbf{v}}_{\epsilon}[\sigma](\omega)=\nabla \textbf{v}_{\epsilon}(\sigma(\omega)), \hspace{3mm} \text{ for all } \omega \in \Omega. 
				\end{equation}
				
				With the above equality in hand, we have that
				\begin{equation}\label{EqualityDerivativeGradient}
					\int_{\Omega}\langle d\nabla^{\textbf{G}} \widetilde{\textbf{v}}_{\epsilon}[\sigma+t\widehat{\sigma}_0](\widehat{\sigma}_0),\widehat{\sigma}_0(\omega)\xi(d\omega)=\int_{\Omega}\langle d\nabla \textbf{v}_{\epsilon}[\sigma(\omega)+t\widehat{\sigma}_0(\omega)](\widehat{\sigma}_0(\omega)),\widehat{\sigma}_0(\omega)\rangle\xi(d\omega).
				\end{equation}
				
				By assumption of the lemma, $\langle d\nabla \textbf{v}_{\epsilon}[\textbf{x}+t\textbf{z}_0](\textbf{z}_0),\textbf{z}_0 \rangle \geq 0$, for all $t \in \mathbb{R}$, $\textbf{x} \in \Delta$ and $\textbf{z}_0 \in \mathbb{R}^n_0$ such that $\textbf{x}+t\textbf{z}_0 \in \Delta$. Thus, noticing the fact that $\sigma(\omega)+t\widehat{\sigma}_0(\omega) \in \Delta$, $\widehat{\sigma}_0(\omega) \in \mathbb{R}^n_0$ for all $\omega \in \Omega$ and using the equality relation in (\ref{EqualityDerivativeGradient}), we have that 
				\begin{equation}
					\int_{\Omega}\langle d\nabla^{\textbf{G}} \widetilde{\textbf{v}}_{\epsilon}[\sigma+t\widehat{\sigma}_0](\widehat{\sigma}_0),\widehat{\sigma}_0(\omega)\xi(d\omega)=\int_{\Omega}\langle d\nabla \textbf{v}_{\epsilon}[\sigma(\omega)+t\widehat{\sigma}_0(\omega)](\widehat{\sigma}_0(\omega)),\widehat{\sigma}_0(\omega)\rangle\xi(d\omega). \geq 0.
				\end{equation}
				
				Therefore, the LHS of (\ref{GradReguPos}), and thus $\frac{\partial \lambda(\sigma, \widehat{\sigma}_0, t)}{\partial t}<0$. Now, let us fix a regularized Bayesian equilibrium ${\sigma}^{\circ}$. It then follows from standard theory of gradient of convex conjugate (see for instance \cite{rockafellar1970convex}) that $\widetilde{\mathcal{G}}^{i}(\sigma^{\circ},\omega)$ is independent of $i$ for every $\omega \in \Omega$. In other words, given the equilibrium $\sigma^{\circ}$, we have for every $\omega \in \Omega$, a $c(\sigma^{\circ},\omega)$ such that $\widetilde{\mathcal{G}}^i(\sigma^{\circ},\omega)=c(\sigma^{\circ},\omega)$ for all $\omega \in \Omega$ and $i \in S$. 
				As a result, we have that $\lambda(\sigma^{\circ},\widehat{\sigma}_0,0)=\int_{\Omega}\langle \mathcal{G}(\sigma^{\circ},\omega),\widehat{\sigma}_0(\omega)\rangle \xi(d\omega)=0$ for all $\widehat{\sigma}_0 \in \widehat{\Sigma}$. Now define $\bar{\sigma}:=\sigma^{\circ}+t\widehat{\sigma}_0$ for some non-zero $\widehat{\sigma}_0 \in \widehat{\Sigma}$ such that $\bar{\sigma}$ is distinct from $\sigma^{\circ}$. By definition, we have $\int_{\Omega}\langle \widetilde{\mathcal{G}}(\bar{\sigma},\omega),\widehat{\sigma}_0(\omega)  \rangle \xi(d\omega)=\lambda(\sigma^{\circ},\widehat{\sigma}_0,t)$.Also, from the previous arguments, we have that $\lambda(\sigma^{\circ},\widehat{\sigma}_0,0)=0$. This, in conjunction with the fact that $\frac{\partial \lambda(\bar{\sigma}, \widehat{\sigma}_0, t)}{\partial t}<0$, implies that $\lambda(\bar{\sigma},\widehat{\sigma}_0,t)<0$. Therefore, we have that $\int_{\Omega}\langle \widetilde{\mathcal{G}}(\bar{\sigma},\omega),\widehat{\sigma}_0(\omega)  \rangle \xi(d\omega)<0$, and hence $\bar{\sigma}$ cannot be a regularized equilibrium. This completes the proof of Lemma \ref{LemBayNegSemGameUnique}.
			\end{proof}

			\subsection{{Proof of Theorem \ref{bndc}}}\label{AppBayNegDefConv}

			Similar to the proof of Theorem \ref{bpgc}, we need to show that the Lyapunov function $\psi^{\alpha}_{M,\epsilon}$ decreases along every solution trajectory to the RBBR learning dynamic which originates in $\Sigma^{\alpha}_{M}(\Xi)$. To this end let us define the following quantities:
			\begin{itemize}
				\item $\psi^{1}_{\epsilon}(\sigma):=\int_{\Omega}\langle\mathcal{G}(\sigma,\omega),{\beta}_{\epsilon}[\sigma](\omega)\rangle{\xi}(d\omega)$, for all $\sigma \in \Sigma^{\alpha}_{M}(\Xi)$, 
				\item $\psi^{2}_{\epsilon}(\sigma):=\widetilde{\textbf{v}}_{\epsilon}({\beta}_{\epsilon}(\sigma))$, for all $\Sigma^{\alpha}_{M}(\Xi)$, 
				\item $\psi^{3}_{\epsilon}(\sigma):=\int_{\Omega}\langle\mathcal{G}(\sigma,\omega),\sigma(\omega)\rangle{\xi}(d\omega)$, for all $\Sigma^{\alpha}_{M}(\Xi)$, and \item $\psi^{4}_{\epsilon}(\sigma):=\widetilde{\textbf{v}}_{\epsilon}(\sigma)$, for all $\Sigma^{\alpha}_{M}(\Xi)$.
			\end{itemize}
			
			To prove the theorem, it is enough to show that $$\dot{\psi}_{\epsilon}(\sigma)=\dot{\psi}^1_{\epsilon}(\sigma)+\dot{\psi}^2_{\epsilon}(\sigma)+\dot{\psi}^3_{\epsilon}(\sigma)+\dot{\psi}^4_{\epsilon}(\sigma)\leq 0.$$ We now compute the derivatives of $\psi^{i}_{\epsilon}$, $i=1,\ldots,4$ separately. We have 
			\begin{align*}
				\text{D}{\psi}^{1}_{\epsilon}[\sigma](\dot{\sigma})&=\int_{\Omega}\langle\text{D}\mathcal{G}_{\omega}[\sigma](\dot{\sigma}),{\beta}_{\epsilon}[\sigma](\omega)\rangle{\xi}(d\omega)+\int_{\Omega}\langle\mathcal{G}_{\omega}(\sigma),\dot{{\beta}}_{\epsilon}[\sigma](\omega)\rangle{\xi}(d\omega).
			\end{align*}
			Since the regularizer $\widetilde{\textbf{v}}_{\epsilon}$ is G\^ateaux differentiable, we have that 
			\begin{align*}
				\text{D}{\psi}^{2}_{\epsilon}[\sigma](\dot{\sigma})&=\text{d}\widetilde{\textbf{v}}_{\epsilon}({{\beta}}_{\epsilon}(\sigma))(\dot{{{\beta}}}_{\epsilon}(\sigma))\\
				&=\int_{\Omega}\langle\dot{{{\beta}}}_{\epsilon}(\sigma),\nabla^{\textbf{G}}\widetilde{\textbf{v}}_{\epsilon}({{\beta}}_{\epsilon}(\sigma)) \rangle{\xi}(d\omega)\\
				&=\int_{\Omega}\nabla^{\textbf{G}}\widetilde{\textbf{v}}_{\epsilon}[{{\beta}}_{\epsilon}(\sigma)](\omega)(\dot{{{\beta}}}_{\epsilon}[\sigma](\omega)){\xi}(d\omega).
			\end{align*}
			For the third term, we have 
			\begin{align*}
				\text{D}\psi^{3}_{\epsilon}[\sigma](\dot{\sigma})&=\int_{\Omega}\langle\text{D}\mathcal{G}_{\omega}[\sigma](\dot{\sigma}),\sigma(\omega)\rangle{\xi}(d\omega)+\int_{\Omega}\langle\mathcal{G}_{\omega}(\sigma),\dot{\sigma}(\omega)\rangle{\xi}(d\omega).
			\end{align*}
			Finally for the fourth term we have 
			\begin{align*}
				\text{D}\psi^{4}_{\epsilon}[\sigma](\dot{\sigma})&=\text{d}\widetilde{\textbf{v}}_{\epsilon}[\sigma](\dot{\sigma})\\
				&=\int_{\Omega}\langle\dot{\sigma},\nabla^{\textbf{G}}\widetilde{\textbf{v}}_{\epsilon}(\sigma)\rangle{\xi}(d\omega)\\
				&=\int_{\Omega}\nabla^{\textbf{G}}\widetilde{\textbf{v}}_{\epsilon}[\sigma](\omega)(\dot{\sigma}(\omega)){\xi}(d\omega).
			\end{align*}
			Combining all the four equations above, we have after simplification that
			\begin{align*}
				\dot{\psi}^{\alpha}_{M,\epsilon}(\sigma)&=\int_{\Omega}\langle\text{D}\mathcal{G}_{\omega}[\sigma](\dot{\sigma}),{{\beta}}_{\epsilon}(\omega)(\omega)\rangle{\xi}(d\omega)+\int_{\Omega}\langle\mathcal{G}_{\omega}(\sigma),\dot{{{\beta}}}_{\epsilon}[\sigma](\omega)\rangle{\xi}(d\omega)\\
				&-\int_{\Omega}\nabla^{\textbf{G}}\widetilde{\textbf{v}}_{\epsilon}[{{\beta}}_{\epsilon}(\sigma)](\omega)(\dot{{{\beta}}}_{\epsilon}[\sigma](\omega)){\xi}(d\omega)-\int_{\Omega}\langle\text{D}\mathcal{G}_{\omega}[\sigma](\dot{\sigma}),\sigma(\omega)\rangle{\xi}(d\omega)\\
				&-\int_{\Omega}\langle\mathcal{G}_{\omega}(\sigma),\dot{\sigma}(\omega)\rangle{\xi}(d\omega)+\int_{\Omega}\nabla^{\textbf{G}}\widetilde{\textbf{v}}_{\epsilon}[\sigma](\omega)(\dot{\sigma}(\omega)){\xi}(d\omega)\\
				&=\int_{\Omega}\langle\text{D}\mathcal{G}_{\omega}[\sigma](\dot{\sigma}),\dot{\sigma}(\omega)\rangle{\xi}(d\omega)+\int_{\Omega}\langle\mathcal{G}_{\omega}(\sigma),\dot{{{\beta}}}_{\epsilon}[\sigma](\omega)-\dot{\sigma}(\omega)\rangle{\xi}(d\omega)\\
				&-\int_{\Omega}\nabla^{\textbf{G}}\widetilde{\textbf{v}}_{\epsilon}[{{\beta}}_{\epsilon}(\sigma)](\omega)(\dot{{{\beta}}}_{\epsilon}[\sigma](\omega)){\xi}(d\omega)+\int_{\Omega}\nabla^{\textbf{G}}\widetilde{\textbf{v}}_{\epsilon}[\sigma](\omega)(\dot{\sigma}(\omega)){\xi}(d\omega).
			\end{align*}
			
			Since $\mathcal{G}$ is a Bayesian negative semidefinite game and $\dot{\sigma} \in \Sigma_0$, it follows from Lemma \ref{sde} that the first term in the RHS of the above equation is less than or equal to zero, that is, $$\int_{\Omega}\langle\text{D}\mathcal{G}_{\omega}[\sigma](\dot{\sigma}),\dot{\sigma}(\omega)\rangle{\xi}(d\omega)\leq 0.$$ Using this fact and rearranging the other terms in the RHS of the above equation, we have 
			\begin{align*}
				\dot{\psi}^{\alpha}_{M,\epsilon}(\sigma) &\leq \int_{\Omega}\langle\mathcal{G}_{\omega}(\sigma)-\nabla^{\textbf{G}}\widetilde{\textbf{v}}_{\epsilon}[{{\beta}}_{\epsilon}(\sigma)](\omega),\dot{{{\beta}}}_{\epsilon}[\sigma](\omega)\rangle{\xi}(d\omega)\\
				&\hspace{3cm}-\int_{\Omega}\langle\mathcal{G}_{\omega}(\sigma)-\nabla^{\textbf{G}}\widetilde{\textbf{v}}_{\epsilon}[\sigma](\omega),\dot{\sigma}(\omega)\rangle{\xi}(d\omega).
			\end{align*}
			
			It now follows from Lemma (\ref{LemPopolnGame=GateauxDeri}) that the first term in the RHS of the above inequality is $0$. It also follows that the second term in the RHS of the above inequality is non-negative. As a result, we have $\dot{\psi}^{\alpha}_{M,\epsilon}\leq 0$ along every solution which originates in $\Sigma^{\alpha}_{M}(\Xi)$. In particular, this implies that the Lyapounov function defined in (\ref{BayNegDefLyapounov}) decreases along every non--stationary solution which originates in $\Sigma^{\alpha}_{M}(\Xi)$. By part (i) of Theorem \ref{ThmSigmaNormCpt}, we have that $\Sigma^{\alpha}_{M}(\Xi)$ is compact under the norm topology on $\widehat{\Sigma}$ and $\dot{\psi}^{\alpha}_{M,\epsilon}\leq 0$ with equality only at the rest points, it follows from the standard theory of dynamical systems (\cite{bhatia2006dynamical}) that the omega--limit of every solution trajectory satisfies $\dot{\psi}^{\alpha}_{M,\epsilon}=0$, which is the set of all rest points of the RBBR dynamic. We therefore have that every non--stationary solution arising in $\Sigma^{\alpha}_{M}(\Xi)$ converges to a regularized Bayesian equilibrium.

	\end{proof}

\end{appendix}

\bibliographystyle{plainnat}
\setcitestyle{numbers}
\bibliography{mybib}

\end{document}